\def\thm@space@setup{
  \thm@preskip=4mm
  \thm@postskip=0mm
}
\let\eqref\cref
\newmdtheoremenv[style=dontsplit]{theorem}{Theorem}
\newtheorem{lemma}[theorem]{Lemma}
\newmdtheoremenv[style=dontsplit]{obs}[theorem]{Observation}
\newmdtheoremenv[style=dontsplit]{remark}[theorem]{Remark}
\newmdtheoremenv[style=dontsplit]{proposition}[theorem]{Proposition}
\newmdtheoremenv[style=dontsplit]{question}[theorem]{Question} 
\newmdtheoremenv[style=dontsplit]{corollary}[theorem]{Corollary} 
\newmdtheoremenv[style=dontsplit]{problem}[theorem]{Problem}
\newmdtheoremenv[style=dontsplit]{conjecture}[theorem]{Conjecture}
\newtheorem*{theorem*}{Theorem}
\newtheorem*{corollary*}{Corollary} 
\newtheorem*{problem*}{Problem}
\newtheorem*{conjecture*}{Conjecture}
\newtheorem*{lemma*}{Lemma}
\newtheorem*{obs*}{Observation}
\newtheorem*{remark*}{Remark}
\newtheorem*{proposition*}{Proposition}
\newtheorem*{question*}{Question} 
\newtheorem*{example*}{Example}
\newtheorem*{claim*}{Claim} 
\theoremstyle{remark}
\newmdtheoremenv[style=dontsplit]{claim}[theorem]{Claim}
\crefname{claim}{Claim}{Claims}
\newmdtheoremenv[style=dontsplit]{example}[theorem]{Example}
\DeclarePairedDelimiter\set{\{}{\}}
\newcommand{\q}[1]{``#1''}
\newcommand{\defin}[1]{\emph{\textcolor{ForestGreen}{#1}}}
\newcommand{\Oh}{\mathcal{O}}
\newcommand{\calM}{\mathcal{M}} 
\newcommand{\calN}{\mathcal{N}}
\newcommand{\calP}{\mathcal{P}}
\newcommand{\calT}{\mathcal{T}}
\newcommand{\dist}{\mathrm{dist}}
\DeclareMathOperator\length{len}
\DeclareMathOperator\degree{deg}
\let\ge\geqslant
\let\leq\leqslant
\let\geq\geqslant
\let\subset\subseteq
\let\epsilon\varepsilon
\let\setminus\backslash
\newenvironment{enumerate'}{\begin{enumerate}[label=\textup{(\roman*')}, noitemsep, 
topsep=2pt plus 2pt, labelindent=.2em, leftmargin=*, widest=10]}{\end{enumerate}}
\newenvironment{enumerateAlpha}{\begin{enumerate}[label=\textup{(\alph*)}, noitemsep, 
topsep=2pt plus 2pt, labelindent=.2em, leftmargin=*, widest=iii]}{\end{enumerate}}
\newenvironment{enumerateAlpha'}{\begin{enumerate}[label=\textup{(\alph*$'$)}, noitemsep, 
topsep=2pt plus 2pt, labelindent=.2em, leftmargin=*, widest=iii]}{\end{enumerate}}
\newenvironment{enumerateAlpha''}{\begin{enumerate}[label=\textup{(\alph*$''$)}, noitemsep, 
topsep=2pt plus 2pt, labelindent=.2em, leftmargin=*, widest=iii]}{\end{enumerate}}
\newcommand{\myitem}[1]{%
\item[#1]\protected@edef\@currentlabel{#1}%
}
\begin{document} 

\title[\MakeUppercase{A coarse Gallai theorem}] 
{\MakeUppercase{A coarse Gallai theorem}}

\author[Distel]{Marc Distel}
\address[Distel]{School of Mathematics, Monash University, Melbourne, Australia}
\email{\href{mailto:Marc.Distel@monash.edu}{marc.distel@monash.edu}}

\author[Giocanti]{Ugo Giocanti}
\email{\href{mailto:ugo.giocanti@uj.edu.pl}{ugo.giocanti@uj.edu.pl}}

\author[Hodor]{J\k{e}drzej Hodor}
\email{\href{mailto:jedrzej.hodor@gmail.com}{jedrzej.hodor@gmail.com}}

\author[Legrand-Duchesne]{Clément Legrand-Duchesne}
\email{\href{mailto:clement.legrand-duchesne@uj.edu.pl}{clement.legrand-duchesne@uj.edu.pl}}

\author[Micek]{Piotr Micek}
\email{\href{mailto:piotr.micek@uj.edu.pl}{piotr.micek@uj.edu.pl}}

\address[Giocanti, Legrand-Duchesne, Micek]{Theoretical Computer Science Department, Faculty of Mathematics and Computer Science, Jagiellonian University, Kraków, Poland.}
\address[Hodor]{Theoretical Computer Science Department, 
Faculty of Mathematics and Computer Science and  Doctoral School of Exact and Natural Sciences, Jagiellonian University, Krak\'ow, Poland}

\thanks{
The research was conducted during a two-week visit of Marc Distel at Jagiellonian University in November 2025.
M.\ Distel is supported by an Australian Government Research Training Program Scholarship.
U.\ Giocanti is supported by the National Science Center of Poland
under grant 2022/47/B/ST6/02837 within the OPUS 24 program.
J.\ Hodor is supported by a Polish Ministry of Education and Science grant (Perły Nauki; PN/01/0265/2022).
C.\ Legrand-Duchesne and P.\ Micek are supported by the National Science Center of Poland under grant
UMO-2023/05/Y/ST6/00079 within the WEAVE-UNISONO program.
}

\begin{abstract}
We prove that there exist functions $f$ and $g$ such that for all positive integers $k$ and $d$, 
for every graph $G$ and every subset $A$ of the vertices of $G$, 
either $G$ contains $k$ $A$-paths such that vertices of different $A$-paths are at distance at least $d$ in $G$, or 
there exists a set $X$ of the vertices of $G$ with $|X|\leq f(k)$ such that 
every $A$-path in $G$
contains a vertex of $B_G(X,g(k,d))$.
\end{abstract}

\maketitle

\section{Introduction}\label{sec:introduction}

In 1964, Gallai~\cite{Gallai64} proved that for every finite graph $G$ and every subset $A$ of the vertices of $G$, either $G$ contains $k$ vertex-disjoint $A$-paths, or there exists a subset $X$ of the vertices of $G$ with $|X|\leq 2k-2$ such that every $A$-path in $G$ contains a vertex of $X$.
Here and throughout, an \defin{$A$-path} in $G$ is a path with at least two vertices and both ends in $A$.

Inspired by Gromov's seminal work on coarse geometry \cite{Gromov}, Georgakopoulos and Papasoglou~\cite{GP23} initiated a systematic search for coarse metric analogs of fundamental statements in structural graph theory, expressing hopes that it could \q{evolve into a coherent theory that could be called \emph{coarse graph theory}}. 
Broadly speaking, coarse geometry (resp.\ graph theory) consists of studying structural properties of metric spaces (resp.\ graphs) from a large-scale perspective, usually by identifying points (resp.\ vertices) with balls of small radius. 
In this spirit, we prove the following coarse variant of Gallai's theorem.

\begin{theorem}
\label{theorem:main}
There exist functions $f:\mathbb N\to \mathbb N$ and $g:\mathbb N^2\to \mathbb N$ such that for all positive integers $k$ and $d$, for every graph $G$ and every subset $A$ of the vertices of $G$, either $G$ contains $k$ $A$-paths which are pairwise at distance at least $d$, or there exists a set $X$ of the vertices of $G$ with $|X|\leq f(k)$ such that every $A$-path in $G$ contains a vertex in $B_G(X,g(k,d))$.
\end{theorem}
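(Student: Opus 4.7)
The natural approach is induction on $k$. The base case $k=1$ is immediate: either $G$ has an $A$-path, or $X=\emptyset$ suffices.

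For the inductive step from $k-1$ to $k$, I would apply Gallai's classical theorem with a large parameter $N=N(k)$. If Gallai returns a hitting set of size $\leq 2N-2$, we are done with $g(k,d)=0$. Otherwise we obtain $N$ vertex-disjoint $A$-paths $Q_1,\dots,Q_N$. Form the auxiliary closeness graph on these paths, with edges encoding pairs at $G$-distance less than $d$, and apply Ramsey's theorem: either there is an independent set of size $k$ (giving $k$ pairwise $d$-far $A$-paths, done), or there is a large clique, i.e., a subfamily $\mathcal Q$ of $M=M(k)$ pairwise $d$-close vertex-disjoint $A$-paths.

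From the cluster $\mathcal Q$, I would pick a distinguished vertex $a^*$---for instance, an endpoint of a shortest representative $P^*\in\mathcal Q$---and apply the inductive hypothesis to the subgraph $G\setminus B_G(a^*,R)$ with parameter $k-1$ and an inflated distance $d'\gg d$, for a carefully chosen radius $R=R(k,d)$. In the \emph{coarse hitting set} outcome, letting $X'$ be the obtained set, the combination $X=X'\cup\{a^*\}$ is a coarse hitting set for $G$: any $A$-path either avoids $B_G(a^*,R)$ (and is then covered by $X'$) or meets it (and is within distance $R$ of $a^*$). In the \emph{far paths} outcome, combining $P^*$ with the $k-1$ paths from induction should give $k$ pairwise $d$-far $A$-paths in $G$, up to a distance-lifting subtlety. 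This yields recurrences $f(k)\leq\max(2N-2,\,f(k-1)+1)$ and $g(k,d)\leq\max(R,\,g(k-1,d'))$, leading to fast-growing (possibly tower-type) functions.

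The principal obstacle is distance-lifting, reminiscent of the coarse Menger setting: the $k-1$ paths returned by induction are pairwise at $G'$-distance at least $d'$ in $G'=G\setminus B_G(a^*,R)$, but a shortcut through the removed ball can reduce their $G$-distance well below $d$. Overcoming this requires a two-scale setup, in which $R$ is chosen large enough and the induction's paths are forced to lie deep inside $G\setminus B_G(a^*,R)$ (for instance, by applying induction on $G\setminus B_G(a^*,R')$ with $R'\gg R$ and separately covering the annulus), so that any detour through the ball has length at least $d$ by the triangle inequality. A secondary difficulty is bounding $\length(P^*)$ by a function of $k$ and $d$ alone, since this length feeds directly into $g(k,d)$; this step is where one genuinely exploits the coarse structure of the cluster $\mathcal Q$, and managing the interplay between the scales $d$, $d'$, $R$, $R'$ in the bookkeeping is the central technical hurdle.
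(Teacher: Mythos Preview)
Your plan correctly isolates the two decisive obstacles, but neither proposed fix actually closes the gap.

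\textbf{No short $P^*$ from the cluster.} Take a clique on $c_1,\ldots,c_M$ and, for each $i$, attach two disjoint paths of length $L$ at $c_i$ with far endpoints $a_i,b_i$; set $A=\{a_i,b_i:i\in[M]\}$. The $A$-paths $Q_i=a_i\cdots c_i\cdots b_i$ are vertex-disjoint and pairwise at $G$-distance~$1$, so they form exactly the kind of cluster your Ramsey step produces. Yet every pair of $A$-vertices is at distance at least $2L$, hence every $A$-path in $G$ has length at least $2L$, with $L$ arbitrary and independent of $k,d$. The cluster $\mathcal Q$ therefore does not, by itself, yield an $A$-path of length bounded in $k$ and $d$, and your plan gives no mechanism for producing one.

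\textbf{Distance lifting.} The two-scale idea does not force the inductive paths to be ``deep''. After deleting $B_G(a^*,R')$ and applying induction, the $k-1$ paths you receive lie in $G'=G\setminus B_G(a^*,R')$ and are pairwise far \emph{in $G'$}, but each may sit at $G$-distance~$1$ from the removed ball. A $G$-shortcut between two of them can then dip into the ball and out again with total length~$2$, regardless of how large $R'$ or $d'$ are. The triangle-inequality bound you invoke would require the paths to be at $G$-distance $\Omega(d)$ from the ball, and the inductive hypothesis gives no such control. This is precisely the obstruction that sinks the analogous induction for coarse Menger (known to be false already for $k=d=3$), and nothing in your outline exploits the one-set structure of Gallai to get around it.

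\textbf{What the paper does instead.} The paper avoids both issues by never deleting anything from $G$. It iteratively grows a \emph{frame}: an $\ell$-fat model in $G$ of a subcubic forest $F$ in which each branch set of a vertex of degree at most~$2$ has bounded radius and meets $A$. At each step, either every $A$-path comes near one of the (at most $2i$) branch-set centres---giving the hitting set---or an uncovered $A$-path approaches some branch path of the model, and a \emph{Tripod Lemma} manufactures a new bounded-radius degree-$3$ branch set there, enlarging the frame while keeping it fat (with a controlled multiplicative loss in~$\ell$). Once the frame is large enough, $F$ contains $k$ vertex-disjoint paths between low-degree vertices, which lift to $k$ $A$-paths in $G$ pairwise at distance at least $\ell$ by fatness. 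All distances are computed in $G$ throughout, so no lifting issue arises, and no short $A$-path is ever needed. The resulting bounds are $f(k)=4k-4$ and $g(k,d)=d\cdot 256^{k}$.
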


Here, and throughout, \defin{$B_G(X,r)$} denotes the set of all vertices of $G$ at distance at most $r$ from $X$ in $G$.
Our proof of~\cref{theorem:main} gives $f(k) = 4k-4$ 
and $g(k,d) = d\cdot 256^k$.
Independently from our work, 
the statement of~\cref{theorem:main} was conjectured by Geelen in 2024.\footnote{Posed at the Barbados Graph Theory Workshop in March 2024 held at the Bellairs Research Institute of McGill University in Holetown.} 
The special case of $d=2$ and arbitrary $k$ was announced 
by Albrechtsen, Knappe, and Wollan~\cite{W24}, and later proved by Hickingbotham and Joret~\cite{HJ25} for functions $f(k)\in \mathcal{O}(k)$ and $g(k,2)=1$.

As mentioned, coarse graph theory roughly consists of studying graphs when identifying vertices with balls of small radius. 
The packing part of the original Gallai's theorem requires the paths to have distinct endpoints.
However, the packing part of~\Cref{theorem:main} allows paths to have both endpoints contained in a small ball. This motivates the following statement, which is better aligned with this requirement.
For a graph $G$, a subset $A$ of the vertices of $G$, and a positive integer $d$, 
an $A$-path is \defin{$d$-coarse} if the distance in $G$ between its endpoints is at least~$d$.
\begin{theorem}
\label{theorem:main2}
There exist functions $f:\mathbb N\to \mathbb N$ and $g:\mathbb N^2\to \mathbb N$ such that for all positive integers $k$ and $d$, 
for every graph $G$ and every subset $A$ of the vertices of $G$, 
either $G$ contains $k$ $d$-coarse $A$-paths which are pairwise at distance at least $d$, or 
there exists a set $X$ of the vertices of $G$ with $|X|\leq f(k)$ such that 
every $g(k,d)$-coarse $A$-path in $G$
contains a vertex in $B_G(X,g(k,d))$.
\end{theorem}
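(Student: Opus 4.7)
My plan is to derive \cref{theorem:main2} from \cref{theorem:main} via a simple reduction, by applying the latter to a carefully chosen $d$-dispersed subfamily of $A$. The two statements differ only in coarseness requirements: \cref{theorem:main2} strengthens the packing outcome (the $k$ packed paths must be $d$-coarse) and weakens the hitting outcome (only $g(k,d)$-coarse $A$-paths need to be hit). Both modifications fit together so as to make such a reduction transparent.

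First, I would construct a maximal $d$-dispersed subset $A' \subseteq A$: a subset in which any two distinct elements are at distance at least $d$ in $G$, with the property that every $a \in A$ lies within distance at most $d-1$ of some $a' \in A'$. Such a set is produced by a greedy selection. I then apply \cref{theorem:main} to $(G, A')$ with parameters $k$ and $d$, obtaining one of two outcomes. If the packing outcome holds, we obtain $k$ $A'$-paths pairwise at distance at least $d$; the two endpoints of each such path are distinct elements of $A'$ and are therefore at distance at least $d$ in $G$, so each is a $d$-coarse $A$-path, delivering the first conclusion of \cref{theorem:main2}.

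Otherwise we obtain a set $X$ with $|X| \leq f(k)$ that meets every $A'$-path within $B_G(X, g(k,d))$. Setting $g'(k,d) := g(k,d) + d$, I would show that this same $X$ meets every $g'(k,d)$-coarse $A$-path within $B_G(X, g'(k,d))$. Let $P$ be such an $A$-path with endpoints $a, b$, so $\dist_G(a,b) \geq g'(k,d) \geq 2d$. Choose $a', b' \in A'$ with $\dist_G(a,a'), \dist_G(b,b') \leq d-1$; the triangle inequality then forces $a' \neq b'$. Concatenating a shortest $a'$-to-$a$ path, $P$, and a shortest $b$-to-$b'$ path produces a walk from $a'$ to $b'$ whose vertex set is contained in $B_G(V(P), d-1)$, and from which one extracts an $A'$-path $P'$. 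The hypothesis on $X$ then yields a vertex $v \in V(P') \cap B_G(X, g(k,d))$, which lies within distance at most $d-1$ of some $u \in V(P)$; hence $u \in V(P) \cap B_G(X, g(k,d)+d)$, as required.

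The reduction is entirely elementary and presents no genuine obstacle beyond invoking \cref{theorem:main} itself; all the difficulty of \cref{theorem:main2} is therefore absorbed by \cref{theorem:main}. With the explicit bounds quoted in the introduction, one obtains \cref{theorem:main2} with $f(k)=4k-4$ and $g'(k,d)=d(256^k+1)$.
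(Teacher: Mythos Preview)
Your reduction is correct and gives a genuinely different route from the paper's own proof. The paper does \emph{not} derive \cref{theorem:main2} from \cref{theorem:main}; instead it proves both theorems simultaneously by re-running the entire frame machinery (Tripod Lemma, augmenting step, subcubic-forest counting) while keeping track of a ``coarse'' variant of the frame in which isolated branch sets are forbidden. Your argument sidesteps all of this: by passing to a maximal $d$-dispersed $A'\subseteq A$, every $A'$-path is automatically $d$-coarse, so the packing side of \cref{theorem:main} already gives the packing side of \cref{theorem:main2}; and the short detour argument, extending a $g'$-coarse $A$-path to an $A'$-path contained in $B_G(V(P),d-1)$, cleanly transfers the hitting set with only an additive loss of $d$ in the radius. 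The only (trivial) points to keep straight are that distinct elements of $A'$ are at distance at least $d$ so the extracted $A'$-paths really are $d$-coarse, and that $a'\neq b'$ follows from $\dist_G(a,b)\ge 2d$.

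What each approach buys: the paper's direct proof yields the same function $g(k,d)=256^k d$ for both theorems, whereas your reduction gives $g'(k,d)=(256^k+1)d$, an immaterial loss. In exchange, your argument is shorter, entirely elementary once \cref{theorem:main} is available, and makes transparent that the ``coarse'' variant is not an independent difficulty but a formal consequence of the ordinary one.
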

Note that the functions $f$ and $g$ for which we prove~\Cref{theorem:main2} below are the same as for~\Cref{theorem:main}. 

Gallai's theorem is similar in flavour to Menger's theorem~\cite{Men27}, published in 1927 and now regarded as one of the cornerstones of structural graph theory.
Menger's theorem states that for every graph $G$ and all subsets $S$ and $T$ of the vertices of $G$, either $G$ contains $k$ vertex-disjoint $S$-$T$ paths, 
or there exists a subset $X$ of the vertices of $G$ with $|X|\leq k-1$ such that every $S$-$T$ path in $G$ contains a vertex in $X$. 
Here, an \defin{$S$-$T$ path} in $G$ is a path between a vertex of $S$ and a vertex of $T$ in $G$. 
Finding a coarse analog of Menger's theorem was one of the first challenges in coarse graph theory. 
Several variants were conjectured, see for example, Georgakopoulos and Papasoglou \cite[Conjecture~1.4]{GP23}; Albrechtsen, Huynh, Jacobs, Knappe, and Wollan~\cite[Conjecture~3]{two-paths}; and Nguyen, Scott, and Seymour~\cite[Conjecture~4.2]{NGUYEN202568}.
The weakest considered version is as follows:
there are functions $f:\mathbb N^2\to \mathbb N$ and $g:\mathbb N^2\to \mathbb N$ such that 
for all positive integers $k$ and $d$, 
for every graph $G$ and all subsets $S$ and $T$ of the vertices of $G$, 
either $G$ contains $k$ $S$-$T$~paths which are pairwise at distance at least $d$, or there exists a subset $X$ of the vertices of $G$ with $|X|\leq f(k,d)$ such that every $S$-$T$ path in $G$ contains a vertex in $B_G(X,g(k,d))$. 
Unfortunately, this is now known to be false already for $k=d=3$ as proved by Nguyen, Scott, and Seymour~\cite{NGUYEN202568,NSS}. Interestingly, the case of $d=2$ (so-called induced case) is still wide open, while the case of $k=2$ is known to be true~\cite{GP23,two-paths,NGUYEN202568}.

Gallai's and Menger's theorems are among the most prominent examples of packing vs hitting statements in graph theory. 
Such results are often referred to as Erd\H{o}s--P\'{o}sa type theorems, after the celebrated theorem of Erd\H{o}s and P\'{o}sa~\cite{EP65} from 1965:
every graph $G$ either contains $k$ vertex-disjoint cycles or contains a set $X$ of $\Oh(k\log k)$ vertices such that every cycle in $G$ contains a vertex in $X$.
The following coarse version of this statement was proved by Dujmovi{\'c}, Joret, Micek, and Morin~\cite{DJMM25}: 
there exist functions $f,g:\mathbb{N}\to\mathbb{N}$ such that for all positive integers $k$ and $d$, for every graph $G$,  either $G$ contains $k$ cycles which are pairwise at distance at least $d$, or there exists a subset $X$ of the vertices of $G$ with $|X|\leq f(k)$ such that every cycle in $G$ contains a vertex in $B_G(X,g(d))$.

We conclude the introduction by proposing a conjecture that 
strengthens the statement of~\cref{theorem:main} in which 
the radius of the balls that hit all the $A$-paths 
is only a function of~$d$. 
\begin{conjecture}
\label{conj:stronger-thm}
There exist functions $f:\mathbb N \to \mathbb N$ and $g: \mathbb N \to \mathbb N$ 
such that for all positive integers $k$ and $d$, 
for every graph $G$ and every $A$ subset of the vertices of $G$, 
either $G$ contains $k$ $A$-paths which are pairwise at distance at least $d$, or 
there exists a set $X$ of the vertices of $G$ with $|X|\leq f(k)$ such that 
every $A$-path in $G$
contains a vertex of $B_G(X,g(d))$.
\end{conjecture}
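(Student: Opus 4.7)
The natural approach is to bootstrap from \cref{theorem:main}, which already yields the weaker bound $g(k,d)=d\cdot 256^k$, and to iteratively replace the $k$-dependent radius by one depending only on $d$. I would proceed by induction on $k$. The base case $k=1$ is trivial: either $G$ contains an $A$-path, in which case any endpoint of one serves as $X$, or $X=\emptyset$ suffices. For the inductive step, apply \cref{theorem:main} either to produce $k$ pairwise $d$-far $A$-paths (and conclude) or to obtain a hitting set $X_0$ with $|X_0|\leq 4k-4$ whose $(d\cdot 256^k)$-balls collectively hit every $A$-path in $G$.

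The next step is to refine each ball around a vertex $x\in X_0$. The idea is to analyze the local structure of $A$-paths traversing $B_G(x,d\cdot 256^k)$ and to show that either we can locally extract an additional pairwise $d$-far $A$-path---yielding a $(k+1)$-th path and contradicting the supposed optimality of our packing---or a bounded number of additional vertices in that ball, together with balls of radius depending only on $d$, already hit every $A$-path passing near $x$. Summing these local hitting sets over $x\in X_0$ would then yield a hitting set of size $f(k)$, which is allowed to grow with $k$, and radius $g(d)$, which is independent of $k$. A variant of the plan is to run a double induction in which the outer induction is on $k$ while the inner induction processes the balls of $X_0$ one by one, each time either exhibiting an extra far-apart $A$-path or absorbing the current ball into a hitting set of bounded radius.

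The main obstacle is precisely this refinement step: a single $A$-path may wind through several of the balls $B_G(x,d\cdot 256^k)$, so hitting it with a small ball centered at one of its internal vertices requires a global rather than local argument, and the $A$-paths internal to one ball may be intricately linked to those of another. In the proof of \cref{theorem:main}, the radius blows up exponentially in $k$ precisely because each recursive layer effectively multiplies the working radius in order to preserve the coarse distance constraint; suppressing this accumulation seems to demand a fundamentally new structural insight. The negative results of Nguyen, Scott, and Seymour for coarse Menger with $d\geq 3$ further indicate that the transition from $k$-dependent to $k$-independent radii is delicate and must exploit specific features of $A$-paths---notably that both endpoints lie in the same set $A$---rather than properties shared by general $S$-$T$ paths.
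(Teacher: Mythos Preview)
The statement you are attempting to prove is \cref{conj:stronger-thm}, which the paper states as an \emph{open conjecture}; there is no proof in the paper to compare against. Your write-up is not a proof but an outline of a natural approach together with an honest identification of why it breaks down, and that identification is accurate.

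Your proposed local refinement step is a genuine gap, not merely a technical detail. The difficulty you name---that an $A$-path may weave through several of the balls $B_G(x,d\cdot 256^k)$ so that hitting it with a small-radius ball cannot be decided locally---is exactly the obstruction that prevents the paper's frame technique from achieving a $k$-independent radius. Each application of \cref{lem:larger-frame-or-hitting} multiplies the fatness parameter by $16$ because the \nameref{lemma:tripod} and \cref{lem:augmenting-the-model} require the incoming model to be roughly an order of magnitude fatter than the outgoing one; this is intrinsic to how the lemma separates the three arms of a tripod, not an artifact of sloppy bookkeeping. Your hope that one could ``locally extract an additional pairwise $d$-far $A$-path'' inside a single ball presupposes some structural dichotomy that neither you nor the paper has established, and the double-induction variant suffers from the same issue: processing balls one at a time does not control how $A$-paths thread between them.

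In short, what you have written is a reasonable summary of why the conjecture is plausible and why the existing methods fall short, but it is not a proof, and the paper does not claim one either.
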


As is typical in coarse graph theory, 
we are especially interested in whether $g$ can be linear in $d$. 
Also, the conjecture may be true with $f(k) = 2k-2$, as in Gallai's theorem.

As a side observation, our proofs of \cref{theorem:main,theorem:main2} are algorithmic. 
For example, \Cref{theorem:main} implies the existence of an algorithm with running time $h(k,d)\cdot n^{\mathcal{O}(1)}$ for some function $h: \mathbb N^2\to \mathbb N$ which, given $k$, $d$, $G$, and $A\subseteq V(G)$ as input, returns either a collection of $k$ 
$A$-paths pairwise at distance at least $d$, or a set $X$ of at most $f(k)$ vertices such that $B_G(X,g(k,d))$ intersects all $A$-paths of $G$. 

\section{Outline of the proof}
One of the recurring proof ideas in Erd\H{o}s--P\'{o}sa theory is the so-called frame technique.
It goes back to 1967, when Simonovits introduced it in an alternative proof of the Erd\H{o}s--P\'{o}sa theorem.
Bruhn, Heinlein, and Joos~\cite{BHJ18} applied the frame technique to get Erd\H{o}s--P\'{o}sa type theorems for even $A$-paths and long $A$-paths. Additionally, they provided a simpler proof of Gallai's theorem, albeit with a weaker bound, 
namely $|X| \leq 4k-2$.
Our proofs of~\Cref{theorem:main,theorem:main2} are likewise based on a coarse adaptation of the frame technique. 
Accordingly, we begin this outline by reviewing the proof of Gallai’s theorem through this approach.

Let $G$ be a graph and $A$ be a subset of the vertices of $G$.
An \defin{$A$-frame} of $G$ is a subcubic forest $H$ contained in $G$ with no isolated vertices, such that $V(H) \cap A$ is exactly the set of leaves of $H$.
Since the null subgraph of $G$ is an $A$-frame, we can pick an inclusion-wise maximal $A$-frame $H$ in $G$.
The main idea is that if $H$
contains a large number of leaves, then it provides a packing of $A$-paths, whereas if $H$ has a bounded number of leaves, then we can find a small set that intersects all $A$-paths.
More formally, we let $c$ denote the number of components of $H$.
First, by a simple induction, see \cref{cor:extracting-paths-from-the-frame}, one can verify that if $H$ contains at least $2k+c-1$ leaves, then it contains $k$ pairwise vertex-disjoint $A$-paths.
Thus, we may assume that $H$ contains at most than $2k+c-2$ leaves.
Then, we define $X$ as all the vertices of degree $1$ or $3$ in $H$.
In a subcubic tree, the number of vertices of degree $3$ equals the number of leaves minus $2$, see~\Cref{lem:bound-branch-vertices}.
In turn, in $H$, the number of vertices of degree $3$ is at most $2k+c-2 - 2c = 2k-c -2$. Therefore, $|X| \leq 4k-4$ and it suffices to argue that every $A$-path in $G$ contains a vertex in $X$.
This follows from the fact that the existence of an $A$-path disjoint from $X$ would allow us to extend the frame and would therefore contradict the maximality of $H$ (see missing details in~\cite[Section~2]{BHJ18}).

In the coarse context, the frame technique works particularly well in the induced setting, i.e. $d=2$.
In this situation, both the proof for cycles~\cite{koreanSODA} and for $A$-paths~\cite{HJ25} are adaptations of the classical frame argument. However, the statements for larger values of $d$ seem more difficult and sometimes false. 
Our adaptation of the frame technique relies on one of the fundamental concepts of coarse graph theory: a fat model of a graph.

Let $G$ and $H$ be graphs. 
A \defin{model} of $H$ in $G$ is a family $(M_x \mid x\in V(H)\cup E(H))$ of connected subgraphs of $G$ such that for all distinct $x,y \in V(H) \cup E(H)$, 
\begin{enumerate}[label=(m\arabic*)]
    \item\label{it:model1} if $x$ and $y$ are incident\footnote{In a graph, a vertex and an edge are \defin{incident} if the vertex is an endpoint of the edge; two edges are \defin{incident} if they share an endpoint; and two vertices are never incident.} in $H$, then
    \begin{itemize}
        \item if $x$ and $y$ are a vertex and an edge, then $V(M_x) \cap V(M_y) \neq \emptyset$;
        \item if $x$ and $y$ are edges sharing an endpoint $z$, then $V(M_x) \cap V(M_y) \subset V(M_z)$;
    \end{itemize}
    \item\label{it:model2} if $x$ and $y$ are not incident in $H$, then $V(M_x) \cap V(M_y) = \emptyset$.
\end{enumerate}
The subgraphs $M_v$ for $v\in V(H)$ are the \defin{branch sets} of the model, and the subgraphs $M_{uv}$ for $uv\in E(H)$ 
are its \defin{branch paths}.
Often, we assume without loss of generality that each branch path $M_{uv}$ is a $V(M_u)$-$V(M_v)$ path in $G$.
Let $\ell$ be a nonnegative integer. 
A model $(M_x \mid x\in V(H)\cup E(H))$ of $H$ in $G$ is \defin{$\ell$-fat} if, for all distinct $x,y\in V(H)\cup E(H)$, 
we have $\dist_G(V(M_x),V(M_y))\geq \ell$, unless $\set{x,y}=\set{v,e}$ where $v\in V(H)$, $e\in E(H)$, and $v$ is incident to $e$ in $H$.

A key idea in our proof is to keep as a frame a fat model of a subcubic forest such that branch sets of vertices of degree at most $2$ contain vertices of $A$. 
As in the classical setting, we fix a maximal frame.
When the frame is small, we aim to extract a hitting set from the frame; therefore, it is essential that each branch set is contained in a ball of small radius.
When the frame is large, we find a packing of $A$-paths that are pairwise far apart in $G$.

The main tool that we develop to extend a frame is the Tripod lemma, which 
we present in a simplified form below. 
See~\Cref{lemma:tripod} for the full statement. 
See also~\Cref{fig:tripod-lemma-illustration}. 
Suppose that we have a connected subgraph $Q$ of $G$ and three vertices $v_1$, $v_2$, $v_3$ in $G$ that~\eqref{lem:tripod-asumpt-dist-xi-xj} are pairwise far apart in $G$, and such that for each $i \in [3]$,~\eqref{lem:tripod-asumpt-dist-xi-Q-large} $v_i$ is not too close and~\eqref{lem:tripod-asumpt-dist-xi-Q-small} not too far from $Q$ in $G$. Then, there are connected subgraphs $Z$, $P_1$, $P_2$, $P_3$ of $G$ such that
\begin{enumerate}
    \item $V_i \in V(P_i)$ and $Z$ intersects $P_i$ for each $i \in [3]$,
    \item $Z$ has bounded radius, 
    \item $V(Z)$ is contained in a bounded radius ball centered on $V(Q)$ in $G$, 
    \item $V(P_i)$ is contained in a bounded radius ball centered on $V(Q) \cup \{v_i\}$ in $G$ for each $i \in [3]$,
    \item $V(P_i)$ and $V(P_j)$ are far apart in $G$ for all distinct $i,j \in [3]$.
\end{enumerate}

\begin{figure}[tp]
  \centering  
  \includegraphics{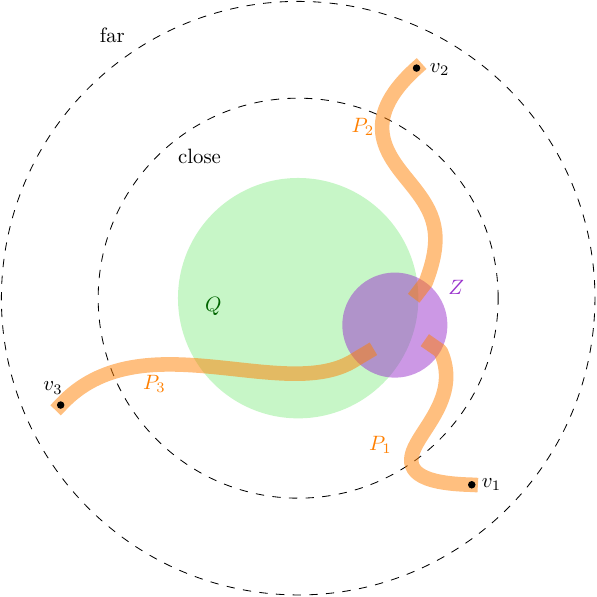}
  \caption{The \nameref{lemma:tripod}.
  The vertices $v_1$, $v_2$, $v_3$ are not too close and not too far from $V(Q)$, and they are far from each other. 
 The subgraphs $Z$, $P_1$, $P_2$, $P_3$ of $G$ are the outcome of the lemma.}  
  \label{fig:tripod-lemma-illustration}
\end{figure}

The \nameref{lemma:tripod} is used in some cases to extend a frame (a fat model of a subcubic forest).
We will assume that there is a path $P$ connecting a vertex in $A$ to a branch path $M_{yz}$ in the frame.
Next, we set as $v_1$ and $v_2$ some specific vertices of $M_{yz}$ and as $v_3$ a vertex of $P$.
Then, $Q$ attaches to $M_{yz}$ in between $v_1$ and $v_2$. Finally, $Z$ will serve as a new branch set of a vertex of degree $3$ in the frame, and $P_1$, $P_2$, $P_3$ will be used to build new branch paths.

The following is an illustrative application of the \nameref{lemma:tripod} that we believe to be of independent interest.
It is a coarse analog of the following basic property of graph minors:
for every subcubic graph $H$, a graph $G$ contains $H$ as a minor if and only if $G$ contains $H$ as a topological minor.

\begin{restatable}{theorem}{topologicalminors}
    \label{lemma:showcase-of-the-tripod-lemma}
    Let $\ell$ be a positive integer, let $G$ be a graph, and let $H$ be a subcubic graph such that $G$ contains a $7\ell$-fat model of $H$. Then $G$ contains a model $\mathcal{N}=(N_x \mid x \in V(H)\cup E(H))$ of $H$ such that 
    \begin{enumerate}
    \item\label{it:thm-top-minor-fat} $\mathcal{N}$ is $\ell$-fat and
    \item\label{it:thm-top-minor-radius} $N_v$ has radius at most $\lfloor1.5\ell\rfloor$ for each $v\in V(H)$.
    \end{enumerate}
\end{restatable}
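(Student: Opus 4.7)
The plan is to produce the desired model $\mathcal{N}$ by shrinking each branch set of the given $7\ell$-fat model $\mathcal{M}=(M_x\mid x\in V(H)\cup E(H))$, applying the \nameref{lemma:tripod} at each vertex of $H$ of degree $3$. Since $H$ is subcubic, every other vertex has degree at most $2$, and for these I would simply pick a single vertex $x_v\in V(M_v)$ and set $N_v:=\{x_v\}$; each incident branch path is then rerouted through $M_v$ from its attachment point back to $x_v$, a modification that remains entirely inside $M_v$ and contributes nothing to other pairwise distances in $G$.

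Fix now a degree-$3$ vertex $v\in V(H)$ with incident edges $e_1,e_2,e_3$. The attachment points $a_1,a_2,a_3$ of the branch paths $M_{e_1},M_{e_2},M_{e_3}$ in $V(M_v)$ are pairwise at distance at least $7\ell$ in $G$ by $7\ell$-fatness, and each $M_{e_i}$ has length at least $7\ell$ since $V(M_v)$ and the branch set of the other endpoint $u_i$ of $e_i$ are at distance at least $7\ell$. I would therefore pick, for each $i\in[3]$, a vertex $v_i\in V(M_{e_i})$ whose $G$-distance from $V(M_v)$ lies in the precise window demanded by the \nameref{lemma:tripod}, and these three witnesses are pairwise far apart in $G$ for the same fatness reasons. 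Applying the \nameref{lemma:tripod} with $Q:=M_v$ and these witnesses yields a connected subgraph $Z$ of radius at most $\lfloor 1.5\ell\rfloor$, contained in a small ball around $V(M_v)$, together with connected subgraphs $P_1,P_2,P_3$ joining $Z$ to the respective $v_i$ and contained in small balls around $V(M_v)\cup\{v_i\}$. I would then set $N_v:=Z$. For each edge $e=uv$ of $H$, the new branch path $N_e$ would be assembled by concatenating, on the $u$-side, the segment produced by the \nameref{lemma:tripod} at $u$ (or, if $u$ has degree at most $2$, a path in $M_u$ from $N_u$ to the attachment point of $M_e$), then the subpath of $M_e$ between the two witnesses $v_i^u, v_j^v$, then the analogous segment on the $v$-side.

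It remains to verify that $\mathcal{N}$ is a model of $H$, is $\ell$-fat, and satisfies the radius bound. The radius bound is immediate from the \nameref{lemma:tripod} (for degree $3$) or trivial (for degree $\leq 2$). The model axioms (m1) and (m2) follow because every modification is confined to a ball of radius $O(\ell)$ around $V(M_v)$ or around $V(M_u)\cup V(M_e)\cup V(M_v)$, so the disjointness of non-incident objects is inherited from the original $7\ell$-fatness. For the $\ell$-fatness itself, two non-incident objects of $\mathcal{N}$ retain distance at least $7\ell-2R$ by the triangle inequality, where $R$ is the largest ball radius involved in the modifications, and the constants from the \nameref{lemma:tripod} must be tuned so that $R\leq 3\ell$. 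The main obstacle I expect is precisely this bookkeeping of constants: in particular, two new branch paths $N_e$ and $N_{e'}$ meeting at a common degree-$3$ branch set $N_v=Z$ must remain at distance at least $\ell$ from each other outside $Z$, which relies on the conclusion of the \nameref{lemma:tripod} that the three outputs $P_1,P_2,P_3$ are pairwise far apart, combined with a careful choice of the witness distances $\dist_G(v_i,V(M_v))$ compatible with the bound $\lfloor 1.5\ell\rfloor$.
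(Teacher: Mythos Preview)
Your overall plan---apply the \nameref{lemma:tripod} at each degree-$3$ vertex with $Q=M_v$, assemble the new branch paths by concatenating the $P_i$'s with a truncated subpath of $M_e$, and then verify $\ell$-fatness via triangle-inequality bookkeeping---is exactly the route the paper takes, and your degree-$3$ analysis is correct.

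There is, however, a genuine error in how you treat degree-$2$ vertices. You propose to set $N_v:=\{x_v\}$ for a single $x_v\in V(M_v)$ and to reroute each of the two incident branch paths through $M_v$ to $x_v$. But then both $N_{e_1}$ and $N_{e_2}$ contain $x_v$, so $\dist_G(V(N_{e_1}),V(N_{e_2}))=0$. The definition of an $\ell$-fat model exempts only vertex--edge incident pairs, not pairs of incident edges: for edges $e_1,e_2$ sharing a common endpoint you still need $\dist_G(V(N_{e_1}),V(N_{e_2}))\geq\ell$. Your remark that the rerouting ``contributes nothing to other pairwise distances'' overlooks precisely this pair. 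Indeed, a single-vertex branch set at a degree-$2$ vertex is impossible in any $\ell$-fat model, since both incident branch paths would be forced to contain that vertex.

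The fix the paper uses is asymmetric. For a degree-$2$ vertex $u$ with incident edges $e_1,e_2$, it first picks, for each $e_i$, a vertex $x_{e_i,u}$ on $M_{e_i}$ at distance exactly $2\ell$ from $V(M_u)$ (these are the witnesses you also need for the degree-$3$ case). It then takes $N_u$ to be a shortest $x_{e_1,u}$--$V(M_u)$ path $Q_1$; this has length $2\ell$, hence radius $\ell\leq\lfloor1.5\ell\rfloor$. On the $e_1$-side the connector $P_{e_1,u}$ is just the single vertex $x_{e_1,u}$, while on the $e_2$-side the connector $P_{e_2,u}$ is $M_u\cup Q_2$ for a shortest $x_{e_2,u}$--$V(M_u)$ path $Q_2$. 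The point is that $P_{e_1,u}=\{x_{e_1,u}\}$ sits at distance $2\ell$ from $V(M_u)$ and at distance $\geq 7\ell-2\ell$ from $Q_2$, so $\dist_G(V(P_{e_1,u}),V(P_{e_2,u}))\geq\ell$ as required. You should replace your degree-$2$ construction by this (or something equivalent) before the rest of your argument goes through.
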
 

The paper is organized as follows. 
\Cref{sec:preliminaries} contains preliminaries.  
In~\Cref{sec:tripod-lemma} we prove the \nameref{lemma:tripod}.
\Cref{sec:augmenting,sec:forests,sec:wrapping} contain the proofs of~\Cref{theorem:main,theorem:main2}.
Namely, in~\Cref{sec:augmenting}, we encapsulate the technical part of the frame-extension step; in~\Cref{sec:forests}, we state and prove some simple observations on subcubic forests; and in~\Cref{sec:wrapping}, we define a frame and wrap up the proofs.
We conclude with the proof of~\Cref{lemma:showcase-of-the-tripod-lemma} in~\Cref{sec:topological-minors}.

\section{Preliminaries}
\label{sec:preliminaries}

By \defin{$\mathbb{N}$}, we denote the set of all positive integers. 
For each $n\in\mathbb{N}$, by \defin{$[n]$},
we denote the set $\set{1,\ldots,n}$.
All graphs in this paper are finite.

Let $G$ be a graph and let $X,Y\subseteq V(G)$. 
An \defin{$X$-$Y$ path} in $G$ is a path from a vertex in $X$ to a vertex in $Y$ with no internal vertices in $X\cup Y$. 
When one of these sets is a single vertex, e.g.\ $X=\set{x}$, we often write an $x$-$Y$ path in $G$ instead of an $\set{x}$-$Y$ path in $G$.
For a path $P$ in $G$ and two vertices $x$ and $y$ of $P$, let \defin{$xPy$} denote the $x$-$y$ subpath of $P$.
For two paths $P$ and $Q$ in $G$ that share exactly one vertex and this vertex is an endpoint of both $P$ and $Q$, we write \defin{$PQ$} to denote the path in $G$ obtained as the concatenation of $P$ and $Q$, i.e.~$P \cup Q$.
For simplicity, we omit repeated elements, e.g.\ when concatenating paths of the form $xPy$ and $yQz$, instead of $xPyyQz$, we write \defin{$xPyQz$}.
Note that we treat edges as two-elements paths.

The \defin{length} of a path $P$, denoted \defin{$\length(P)$}, is the number of edges in $P$.
Let $u$ and $v$ be vertices of $G$. 
The \defin{distance} between $u$ and $v$ in $G$, denoted by \defin{$\dist_G(u,v)$}, is the length of a shortest path between $u$ and $v$ in $G$, or $\infty$ if no such path exists. 

For an integer $r$ and $X \subset V(G)$, we define the \defin{ball of radius $r$ centered on $X$} in $G$ as $\defin{\text{$B_G(X,r)$}} = \{y \in V(G) : \dist_G(X,y) \leq r\}$.
For simplicity, for each $x \in V(G)$, we write \defin{$B_G(x,r)$} meaning $B_G(\{x\},r)$.
Note that we allow radii to be negative, in which case the ball is empty.

The \defin{radius} of a connected graph $G$ is the minimum positive integer $r$ such that there exists $v \in V(G)$ with $V(G) = B_G(v,r)$.
The \defin{degree} of a vertex $v$ in a graph $G$, denoted by \defin{$\deg_G(v)$}, is the number of edges in $G$ incident to $v$. We say that $G$ is \defin{subcubic} if all vertices of $G$ have degree at most $3$.

\section{The tripod lemma}
\label{sec:tripod-lemma}

A key technical ingredient of the frame-extension step in the proof of~\Cref{theorem:main,theorem:main2} is the Tripod Lemma, which we state and prove below.

\begin{lemma}[Tripod Lemma]
\label{lemma:tripod}
Let $G$ be a graph, 
let $v_1$, $v_2$, $v_3$ be vertices of $G$, and let $Q$ be a connected subgraph of $G$. 
Let $\ell$ and $d$ be positive integers such that 
for all distinct $i,j\in[3]$:
\begin{align}
\dist_G(v_i,V(Q)) &\geq \ell,
\label{lem:tripod-asumpt-dist-xi-Q-large}\tag{$\star$}\\
\dist_G(v_i,V(Q))&\leq d,
\label{lem:tripod-asumpt-dist-xi-Q-small}\tag{$\star\star$}\\
\dist_G(v_i,v_j)&\geq 2d.
\label{lem:tripod-asumpt-dist-xi-xj}
\tag{${\star}{\star}{\star}$}
\end{align}
Then, there exist four connected subgraphs $Z$, $P_1$, $P_2$, $P_3$ of $G$ such that
\begin{enumerate}
\item $v_i\in V(P_i)$ and $V(Z)\cap V(P_i)\neq\emptyset$ for each $i\in[3]$, 
\label{lem:tripod-lemma:item:Pi-intersects-Z}
\item $Z\ \textrm{has radius at most $\lfloor1.5\ell\rfloor$}$, 
\label{lem:tripod-lemma:item:Z-has-bounded-radius}
\item $V(Z)\subseteq B_G(V(Q),2\ell-1)$, 
\label{lem:tripod-lemma:item:Z-is-close-to-Q}
\item $V(P_i)  \subseteq B_G(v_i,d-\ell-1) \cup B_G(V(Q),\ell)$ for each $i\in[3]$,
\label{lem:tripod-lemma:item:Pi-does-not-wander}
\item $\dist_G(V(P_i),V(P_j))\geq\ell$ for all distinct $i,j \in [3]$.
\label{lem:tripod-lemma:item:Pi-Pj-are-far-apart}
\end{enumerate}
\end{lemma}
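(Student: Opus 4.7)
The approach is to construct the arms $P_i$ using the three shortest $v_i$-to-$V(Q)$ paths $R_i$ (split at distance $\ell$ from $V(Q)$) together with inner extensions through $V(Q)$, and to define the hub $Z$ as the union of a central sub-structure in $V(Q)$ together with short corridors from the $w_i$'s. First, for each $i\in[3]$, let $R_i$ be a shortest $v_i$-$V(Q)$ path of length $r_i\in[\ell,d]$ ending at $u_i\in V(Q)$, and let $w_i\in V(R_i)$ be at distance exactly $\ell$ from $V(Q)$; write $R_i^{\mathrm{out}}=v_iR_iw_i$ and $R_i^{\mathrm{in}}=w_iR_iu_i$. A single triangle-inequality computation based on~\eqref{lem:tripod-asumpt-dist-xi-xj} and the bound $R_i^{\mathrm{out}}\subseteq B_G(v_i,d-\ell)$ yields $\dist_G(R_i^{\mathrm{out}},R_j^{\mathrm{out}})\ge 2\ell$ for all $i\ne j$, establishing the outer separation at no cost.

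Next, since~\ref{lem:tripod-lemma:item:Z-is-close-to-Q} only becomes stronger when $Q$ is shrunk, I would replace $Q$ by a Steiner subtree $T^{\ast}$ for $\{u_1,u_2,u_3\}$ inside a spanning tree of $Q$; then $T^{\ast}$ is either a path (with $u_2$ in the middle) or a proper tripod with some branch vertex $c$ and arms $A_i$ of length $a_i$ from $c$ to $u_i$. Pick $b_i\in V(A_i)$ at distance $\min(a_i,\lceil\ell/2\rceil)$ from $c$, and set
\[
Z \;=\; (cA_1b_1)\cup(cA_2b_2)\cup(cA_3b_3)\,\cup\!\!\bigcup_{i\,:\,a_i\le\lceil\ell/2\rceil}\!\!R_i^{\mathrm{in}},
\qquad
P_i \;=\;
\begin{cases}
R_i^{\mathrm{out}} & \text{if } a_i\le\lceil\ell/2\rceil,\\
R_i\cup(u_iA_ib_i) & \text{if } a_i>\lceil\ell/2\rceil.
\end{cases}
\]
The hub $Z$ is connected, is contained in $B_G(V(Q),\ell)\subseteq B_G(V(Q),2\ell-1)$, and has radius at most $\lceil\ell/2\rceil+\ell\le\lfloor 3\ell/2\rfloor$ centered at $c$. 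Each $P_i$ meets $Z$ either at $w_i$ (via $R_i^{\mathrm{in}}\subseteq Z$ in the short-arm case) or at $b_i$ (in the long-arm case), so conditions~\ref{lem:tripod-lemma:item:Pi-intersects-Z}--\ref{lem:tripod-lemma:item:Pi-does-not-wander} follow by construction.

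The main obstacle is verifying~\ref{lem:tripod-lemma:item:Pi-Pj-are-far-apart}, i.e.\ $\dist_G(P_i,P_j)\ge\ell$, which I would approach by case analysis on the position of representative vertices $x\in V(P_i)$ and $y\in V(P_j)$. There are four cases: \emph{(a)} both in outer halves, handled by the $2\ell$-bound from the setup; \emph{(b)} outer against inner vertex inside $V(Q)$, handled by $\dist_G(R_i^{\mathrm{out}},V(Q))\ge\ell$; \emph{(c)} outer against an inner corridor $R_j^{\mathrm{in}}$, handled by triangle inequality through $v_i,v_j$, using $\dist_G(v_j,y)\le r_j\le d$ and $\dist_G(v_i,x)\le r_i-\ell$, which yields $\dist_G(x,y)\ge 2d-r_j-(r_i-\ell)\ge\ell$; and \emph{(d)} both on inner pieces near $V(Q)$, which is the delicate case. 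The delicacy in case~\emph{(d)} is the principal obstacle, because $T^{\ast}$-distances only upper-bound rather than lower-bound $G$-distances---chord shortcuts in $G\setminus T^{\ast}$ may bring two arms close together even when their $T^{\ast}$-distance is large. To overcome this, I would again rely on triangle inequality through $v_i$ and $v_j$ together with the bound $\dist_G(v_i,x)\le r_i+a_i-\lceil\ell/2\rceil$ for vertices of the inner arm $u_iA_ib_i$, and tune the case split on $a_i$ vs.~$\lceil\ell/2\rceil$ so that the resulting lower bound stays at least $\ell$ in each regime; this is the step I expect to drive the finer quantitative choices in the actual proof.
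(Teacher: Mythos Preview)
Your case~(d) is a genuine gap, not merely a delicacy, and the triangle-inequality fix you outline cannot close it. The bound $\dist_G(v_i,x)\le r_i+a_i-\lceil\ell/2\rceil$ is useless here because $a_i$ is a length measured \emph{in the Steiner tree $T^\ast$}, and nothing in the hypotheses prevents $a_i$ from being arbitrarily large compared with $d$ and $\ell$; hence $2d-\dist_G(v_i,x)-\dist_G(v_j,y)$ can be as negative as you like. Even the simplest sub-case already breaks: when both $a_i,a_j>\lceil\ell/2\rceil$, your $P_i$ and $P_j$ contain the endpoints $u_i,u_j\in V(Q)$ of the shortest paths $R_i,R_j$, and these two vertices of $Q$ can be at $G$-distance $1$ via an edge of $G$ not in $Q$, while being far apart in $T^\ast$. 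No tuning of the threshold $\lceil\ell/2\rceil$ helps, because the hypotheses~\eqref{lem:tripod-asumpt-dist-xi-Q-large}--\eqref{lem:tripod-asumpt-dist-xi-xj} say absolutely nothing about distances between pairs of vertices inside $V(Q)$.

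This is precisely why the paper's proof is not a one-shot construction but an \emph{iterative} one. It maintains a shrinking connected core $C\subseteq Q$, outer paths $R_i$ (your $R_i^{\mathrm{out}}$, kept at distance $\ge\ell$ from $C$), and fresh shortest connectors $B_i$ of length exactly $\ell$ from the tip $w_i$ of each $R_i$ to the \emph{current} $C$. At each step one shows that either some $R_i$ is already within $\ell$ of some $B_j$, or two $B_i$'s are within $\ell$ of each other---in which case $Z$ is built from those short pieces and one $P_k$ absorbs $C$---or else one can delete a vertex $c_\alpha$ from $C$, pass to a smaller component $D$, and re-draw $B_\alpha$ as a shortest path to $D$. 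The key point is that the $B_i$'s are always genuine shortest paths in $G$ of length $\ell$, so when two of them come close the hub $Z$ really does have radius $\le\lfloor1.5\ell\rfloor$, and the separation~\ref{lem:tripod-lemma:item:Pi-Pj-are-far-apart} is enforced by invariants maintained throughout, never by appealing to the internal metric of $Q$.
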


\begin{proof} 
A tuple $(C,\xi,\set{(R_i,w_i,B_i)}_{i\in[3]})$ is a \defin{tripoid} if 
\begin{enumerateAlpha}
\item $C$ is a connected subgraph of $Q$ and $\xi\in[3]$; and 
\label{instance:C-connected}
\end{enumerateAlpha}
for each $i\in[3]$, $R_i$ and $B_i$ are subgraphs of $G$ and $w_i$ is a vertex of $G$ such that
\begin{enumerateAlpha}
\setcounter{enumi}{1}
\item $R_i$ is a $v_i$-$w_i$ path in $G$,
\label{instance:Ri-is-an-xi-yi-path}
\item $\dist_G(V(R_i),V(C))\geq\ell$,
\label{instance:Ri-far-from-C}
\item $\dist_G(w_i,V(C))=\ell$ and
$B_i$ is a shortest $w_i$-$V(C)$ path in $G$,
\label{instance:Bi-is-an-yi-C-path}
\item $V(R_i)\subseteq B_G(v_i,d-\ell-1) \cup B_G(V(Q),\ell)$,
\label{instance:Ri-in-the-ball}
\item $\dist_G(V(R_i),V(R_j))\geq\ell$ for each $j\in[3]\setminus\{i\}$,
\label{instance:distances-R-R}
\item $\dist_G(V(R_i),V(B_j))\geq\ell$ for each $j\in[3]\setminus\set{i,\xi}$.
\label{instance:distances-R-B}
\end{enumerateAlpha}

\begin{figure}[tp]
  \centering  
  \includegraphics[page =1]{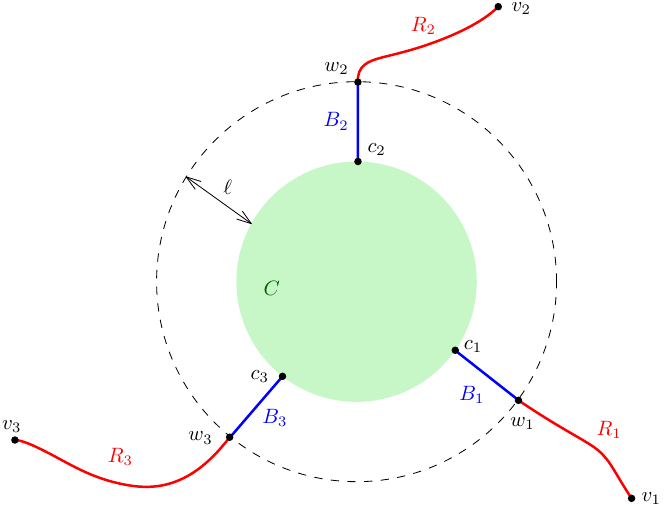}
  \caption{A tripoid $(C,\xi,\set{R_i,w_i,B_i}_{i\in[3]})$.}  
  \label{fig: instance}
\end{figure}

See an illustration in~\Cref{fig: instance}.
The proof strategy is to create a starting tripoid and then keep improving it, with each step of iteration decreasing $|V(C)|$, until we obtain a tripoid from which we can find desired subgraphs $Z$, $P_1$, $P_2$, $P_3$ as in the lemma statement. 

First, we explain how to create an initial tripoid. 
For every $i\in[3]$ we define the following objects.
Let $S_i$ be a shortest path from $v_i$ to $V(Q)$ in $G$.
Let $c_i$ be the endpoint of $S_i$ in $Q$. 
By~\eqref{lem:tripod-asumpt-dist-xi-Q-large}, we have $\dist_G(v_i,c_i)=\dist_G(v_i,V(C))\geq\ell$. 
Let $w_i$ be the vertex of $S_i$ such that $\dist_G(w_i,c_i)=\ell$.
For each $i\in[3]$, let
\[R_i=v_iS_iw_i \ \text{ and } \ B_i = w_iS_ic_i.\]

We claim that $(Q,\xi,\set{(R_i,w_i,B_i)}_{i\in[3]})$ is a tripoid for an arbitrary choice of $\xi\in[3]$.
Since $Q$ is a connected graph, \ref{instance:C-connected} is satisfied. 
Fix $i\in[3]$. 
Item~\ref{instance:Ri-is-an-xi-yi-path} is obviously satisfied by the definition of $R_i$. 
Items~\ref{instance:Ri-far-from-C} and~\ref{instance:Bi-is-an-yi-C-path} hold since $S_i$ is a shortest path in $G$ and by the choice of~$w_i$.
By~\eqref{lem:tripod-asumpt-dist-xi-Q-small}, $S_i$ has length at most $d$.
Since $S_i$ is a $v_i$-$V(Q)$ path in $G$, it follows that $V(R_i)\subseteq V(S_i) \subseteq B_G(v_i,d-\ell-1)\cup B_G(V(Q),\ell)$, so~\ref{instance:Ri-in-the-ball} holds.

Finally, note that for all distinct $i,j\in[3]$, we have 
\begin{align*}
2d &\leq \dist_G(v_i,v_j)&&\textrm{by~\eqref{lem:tripod-asumpt-dist-xi-xj}}\\
&\leq \length(R_i)+\dist_G(V(R_i),V(S_j)) + \length(S_j)\\
&\leq (d-\ell) + \dist_G(V(R_i),V(R_j\cup B_j))+d&&\textrm{by~\eqref{lem:tripod-asumpt-dist-xi-Q-large} and~\eqref{lem:tripod-asumpt-dist-xi-Q-small}}
\end{align*}
which gives
$\dist_G(V(R_i),V(R_j\cup B_j)) \geq \ell$.
This completes the proof of~\ref{instance:distances-R-R} and~\ref{instance:distances-R-B}. 
Thus, $(Q,\xi,\set{(R_i,w_i,B_i)}_{i\in[3]})$ is an instance, as claimed.

In what follows, for a given tripoid $(C,\xi,\set{(R_i,w_i,B_i)}_{i\in[3]})$, for each $i\in [3]$, we will let $c_i$ denote the unique vertex of $B_i$ belonging to $V(C)$.

Now suppose that we 
are given a tripoid $(C,\xi,\set{(R_i,w_i,B_i)}_{i\in[3]})$. 
The plan is to either find subgraphs $Z$, $P_1$, $P_2$, $P_3$ of $G$ 
satisfying the assertion of the lemma or to find another tripoid $(C',\xi',\set{(R_i',w_i',B_i')}_{i\in[3]})$ with $|V(C')|< |V(C)|$. 
This will complete the inductive proof of the lemma.

First, suppose that there is $\alpha\in[3]\setminus\set{\xi}$ such that
$\dist_G(V(R_\alpha),V(B_{\xi}))<\ell$. 
We show that in this case, one can construct subgraphs $Z$, $P_1$, $P_2$, $P_3$ of $G$ satisfying the conclusion of the lemma.
Fix such an index $\alpha$ and $\beta\in[3]$ such that $\set{\alpha,\xi,\beta}=\set{1,2,3}$. Let $S$ be a shortest $V(R_\alpha)$-$V(B_{\xi})$ path in $G$. 
Thus, $\length(S) < \ell$. 
Let $s_\alpha$ and $s_{\xi}$ be the endpoints of $S$ in $V(R_\alpha)$ and $V(B_\xi)$, respectively. 

We define
\begin{align*}
    Z= B_\xi \cup S, \ \ P_\alpha = R_\alpha, \ \ P_\xi = R_\xi, \ \text{ and } \ P_\beta = R_\beta \cup B_\beta \cup C.
\end{align*}
See~\Cref{fig: R-B-close}.
We claim that $Z$, $P_1$, $P_2$, $P_3$ satisfy the assertion of the lemma. 
Each of $B_\xi$, $S$, $R_\alpha$, $R_\xi$, $R_\beta$, $B_\beta$, and $C$ is a connected subgraph of $G$ (by~\ref{instance:C-connected}, \ref{instance:Ri-is-an-xi-yi-path}, and~\ref{instance:Bi-is-an-yi-C-path}).
Since $s_\xi$ is a common vertex of $B_\xi$ and $S$, $Z$ is connected.
Clearly, $P_\alpha$ and $P_\xi$ are connected.
Since $w_\beta$ is a common vertex of $R_\beta$ and $B_\beta$, and $c_\beta$ is a common vertex of $B_\beta$ and $C$, $P_\beta$ is connected.

By~\ref{instance:Ri-is-an-xi-yi-path}, $v_i \in V(R_i)\subseteq V(P_i)$ for each $i \in [3]$.
Moreover, 
\begin{align*}
    s_\alpha \in V(R_\alpha) \cap V(S) &\subseteq V(P_\alpha) \cap V(Z),\\
    w_\xi \in V(R_\xi) \cap V(B_\xi) &\subset V(P_\xi) \cap V(Z),\\
    c_\xi \in V(C) \cap V(B_\xi) &\subset V(P_\beta) \cap V(Z).
\end{align*}
Thus,~\ref{lem:tripod-lemma:item:Pi-intersects-Z} holds.

Recall that $Z$ is the union of two paths that share a vertex: $B_\xi$ of length $\ell$ by~\ref{instance:Bi-is-an-yi-C-path} and $S$ of length less than $\ell$ by the case assumption. 
This implies that $Z$ has radius at most $\ell$.
Thus,~\ref{lem:tripod-lemma:item:Z-has-bounded-radius} holds. 

\begin{figure}[tp]
  \centering  
  \includegraphics[page =1]{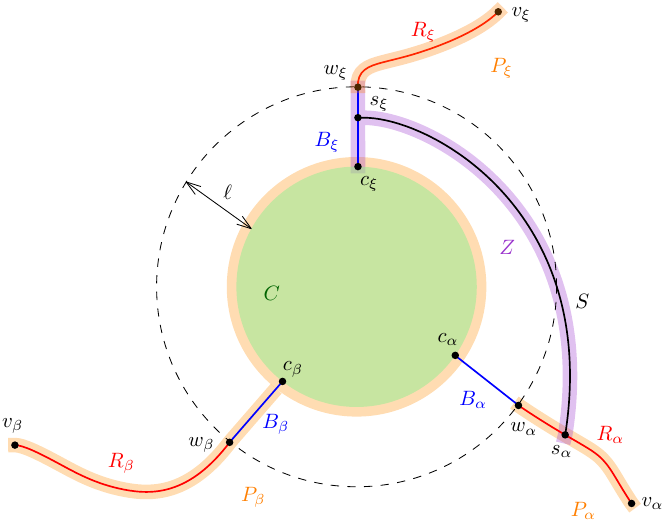}
  \caption{Construction of $Z$, $P_1$, $P_2$, $P_3$ when $\dist_G(V(R_\alpha), V(B_\xi)) < \ell$.} 
  \label{fig: R-B-close}
\end{figure}

We have $V(B_\xi) \subseteq B_G(c_\xi,\ell)$ (by~\ref{instance:Bi-is-an-yi-C-path}) 
and since $s_\xi$ is a vertex of $B_\xi$, we have $V(S) \subset B_G(c_\xi,2\ell-1)$.
Altogether, $V(Z) \subset B_G(c_\xi,\max\{\ell,2\ell-1\}) = B_G(c_\xi,2\ell-1)$.
Since $c_\xi \in V(C)$ and $C$ is a subgraph of $Q$ (by~\ref{instance:C-connected}), $V(Z) \subset B_G(V(Q),2\ell-1)$. 
Thus~\ref{lem:tripod-lemma:item:Z-is-close-to-Q} holds.

Observe that 
\begin{align*}
V(P_\alpha) &= V(R_\alpha) \subseteq B_G(v_\alpha,d-\ell-1) \cup B_G(V(Q),\ell)&&\textrm{by~\ref{instance:Ri-in-the-ball};}\\
V(P_\xi) &= V(R_\xi) \subseteq B_G(v_\xi,d-\ell-1) \cup B_G(V(Q),\ell)&&\textrm{by~\ref{instance:Ri-in-the-ball};}\\
V(P_\beta)&= V(R_\beta) \cup V(B_\beta) \cup V(C)\\
&\subseteq B_G(v_\beta,d-\ell-1) \cup B_G(V(Q),\ell),&&\textrm{by~\ref{instance:Ri-in-the-ball},~\ref{instance:Bi-is-an-yi-C-path},~and~\ref{instance:C-connected};}
\end{align*}
so~\ref{lem:tripod-lemma:item:Pi-does-not-wander} holds.

Finally, note that 
\begin{align*}
\dist_G(V(P_\alpha),V(P_\xi\cup P_\beta)) &= \dist_G(V(R_\alpha),V(R_\xi\cup R_\beta\cup B_\beta\cup C) ) \geq \ell&&\textrm{by~\ref{instance:distances-R-R},~\ref{instance:distances-R-B}, and~\ref{instance:Ri-far-from-C};}\\
\dist_G(V(P_\xi),V(P_\beta)) &= \dist_G(V(R_\xi),V(R_\beta\cup B_\beta\cup C) \geq \ell&&\textrm{by~\ref{instance:distances-R-R},~\ref{instance:distances-R-B}, and~\ref{instance:Ri-far-from-C}.}
\end{align*}
Thus~\ref{lem:tripod-lemma:item:Pi-Pj-are-far-apart} holds. 
This completes the proof that $Z$, $P_1$, $P_2$, $P_3$ satisfy the assertion of the lemma.

From now on, we assume that the given tripoid $(C,\xi,\set{(R_i,w_i,B_i)}_{i\in[3]})$ satisfies
\begin{enumerateAlpha'}
\setcounter{enumi}{6}
\item $\dist_G(V(R_i),V(B_j))\geq\ell$ for all distinct $i,j\in[3]$.
\label{instance':distances-R-B}
\end{enumerateAlpha'}
In other words the value of $\xi$ is now irrelevant. 

Next, suppose that there are distinct $\alpha,\beta\in[3]$ such that
$\dist_G(V(B_\alpha),V(B_\beta))< \ell$. Again, we show that in this case, one can directly construct $Z$ and $P_1$, $P_2$, $P_3$ satisfying the conclusion of the lemma.
Let $\alpha$ and $\beta$ be such indices and fix $\gamma\in[3]$ such that $\set{\alpha,\beta,\gamma}=\set{1,2,3}$. 
Let $S$ be a shortest $V(B_\alpha)$-$V(B_\beta)$ path in $G$. 
Thus, $\length(S)< \ell$. 

\begin{figure}[tp]
  \centering  
  \includegraphics[page =1]{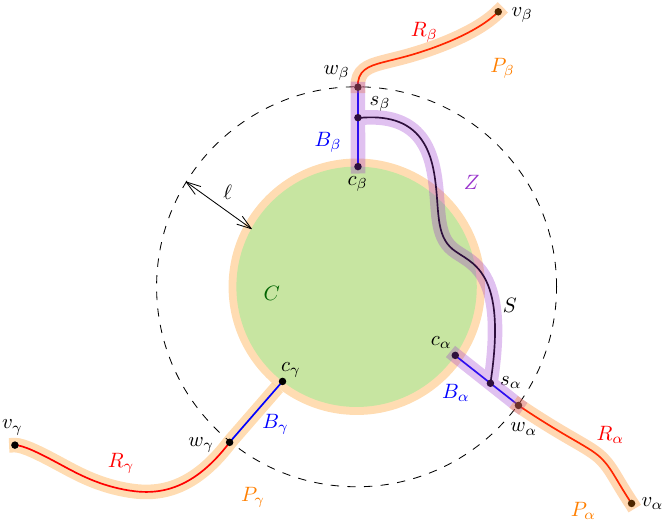}
  \caption{Construction of $Z$, $P_1$, $P_2$, $P_3$ when $\dist_G(V(B_\alpha), V(B_\beta)) < \ell$.} 
  \label{fig: B-B-close}
\end{figure}

We define
\begin{align*}
    Z= B_\alpha \cup B_\beta \cup S, \ \ P_\alpha = R_\alpha, \ \ P_\beta = R_\beta, \ \text{ and } \ P_\gamma = R_\gamma \cup B_\gamma \cup C.
\end{align*}
See~\Cref{fig: B-B-close}.
We claim that $Z$, $P_1$, $P_2$, $P_3$ satisfy the assertion of the lemma. 
Each of $B_\alpha$, $B_\beta$, $S$, $R_\alpha$, $R_\beta$, $R_\gamma$, $B_\gamma$, and $C$ is a connected subgraph of $G$ (by~\ref{instance:C-connected}, \ref{instance:Ri-is-an-xi-yi-path}, and~\ref{instance:Bi-is-an-yi-C-path}).
Since $s_\alpha$ is a common vertex of $B_\alpha$ and $S$, and $s_\beta$ is a common vertex of $B_\beta$ and $S$, $Z$ is connected.
Clearly, $P_\alpha$ and $P_\beta$ are connected.
Since $w_\gamma$ is a common vertex of $R_\gamma$ and $B_\gamma$, and $c_\gamma$ is a common vertex of $B_\gamma$ and $C$, $P_\gamma$ is connected.

By~\ref{instance:Ri-is-an-xi-yi-path}, $v_i \in V(R_i)\subseteq V(P_i)$ for each $i \in [3]$.
Moreover, 
\begin{align*}
    w_\alpha \in V(R_\alpha) \cap V(S) &\subseteq V(P_\alpha) \cap V(Z),\\
    w_\beta \in V(R_\beta) \cap V(B_\xi) &\subset V(P_\beta) \cap V(Z),\\
    c_\alpha \in V(C) \cap V(B_\alpha) &\subset V(P_\gamma) \cap V(Z).
\end{align*}
Thus,~\ref{lem:tripod-lemma:item:Pi-intersects-Z} holds.

Recall that $Z$ is the union of three paths: $B_\alpha$, $B_\beta$ and $S$. 
Since both $B_\alpha$ and $B_\beta$ have length $\ell$, and the length of $S$ is less than $\ell$, 
we obtain that $Z$ has radius at most $\lceil\frac{\ell-1}{2}\rceil+\ell = \lfloor \frac{\ell}{2} \rfloor + \ell = \lfloor 1.5 \ell \rfloor$.
Thus,~\ref{lem:tripod-lemma:item:Z-has-bounded-radius} holds.

We have $V(B_\alpha) \subseteq B_G(c_\alpha,\ell)$ and $V(B_\beta) \subset B_G(c_\beta,\ell)$ (by~\ref{instance:Bi-is-an-yi-C-path}).
It follows that $V(S) \subset B_G(c_\alpha,2\ell-1)$.
Since $c_\alpha,c_\beta \in V(C)$ and $C$ is a subgraph of $Q$ (by~\ref{instance:C-connected}), $V(Z) \subset B_G(V(Q),\max\set{\ell,2\ell-1}) = B_G(V(Q),2\ell-1)$. Thus, \ref{lem:tripod-lemma:item:Z-is-close-to-Q} holds.

Note that 
\begin{align*}
V(P_\alpha) &= V(R_\alpha) \subseteq B_G(v_\alpha,d-\ell-1) \cup B_G(V(Q),\ell)&&\textrm{by~\ref{instance:Ri-in-the-ball};}\\
V(P_\beta) &= V(R_\beta) \subseteq B_G(v_\beta,d-\ell-1) \cup B_G(V(Q),\ell)&&\textrm{by~\ref{instance:Ri-in-the-ball};}\\
V(P_\gamma)&= V(R_\gamma) \cup V(B_\gamma) \cup V(C)\\
&\subseteq B_G(v_\gamma,d-\ell-1) \cup B_G(V(Q),\ell),&&\textrm{by~\ref{instance:Ri-in-the-ball},~\ref{instance:Bi-is-an-yi-C-path}, and~\ref{instance:C-connected}.}
\end{align*}
Thus,~\ref{lem:tripod-lemma:item:Pi-does-not-wander} holds.

Finally, note that 
\begin{align*}
\dist_G(V(P_\alpha),V(P_\beta\cup P_\gamma)) &= \dist_G(V(R_\alpha),V(R_\beta\cup R_\gamma\cup B_\gamma\cup C) ) \geq \ell&&\textrm{by~\ref{instance:distances-R-R},~\ref{instance':distances-R-B}, and~\ref{instance:Ri-far-from-C};}\\
\dist_G(V(P_\beta),V(P_\gamma)) &= \dist_G(V(R_\beta),V(R_\gamma\cup B_\gamma\cup C) \geq \ell&&\textrm{by~\ref{instance:distances-R-R},~\ref{instance':distances-R-B}, and~\ref{instance:Ri-far-from-C}.}
\end{align*}
Thus,~\ref{lem:tripod-lemma:item:Pi-Pj-are-far-apart} holds. 
This completes the proof that $Z$, $P_1$, $P_2$, $P_3$ are the desired outcome of the lemma.

Therefore, we may assume from now on that our tripoid 
$\calT=(C,\xi,\set{(R_i,w_i,B_i)}_{i\in[3]})$ satisfies
\begin{enumerateAlpha}
\setcounter{enumi}{7}
\item $\dist_G(V(B_i),V(B_j)) \geq \ell$ for all distinct $i,j \in [3]$. 
\label{instance:distances-B-B}
\end{enumerateAlpha}
Recall that $c_1$, $c_2$, and $c_3$ are vertices of $C$. 
Note that by~\ref{instance:distances-B-B} these are three distinct vertices. 
We claim that there is $\alpha \in [3]$ such that if $\{\alpha,\beta,\gamma\} = \{1,2,3\}$, then $c_\beta$ and $c_\gamma$ lie in the same component of $C - c_\alpha$.
If $c_2$ and $c_3$ lie in the same component of $C - c_1$, then we set $\alpha = 1$.
Thus, suppose that $c_2$ and $c_3$ lie in different components of $C - c_1$.
Since $C$ is connected, we can fix a $c_2$-$c_3$ path $P$ in $C$.
It follows that $c_1$ is an internal vertex of $P$.
Note that $P - c_2$ is a path in $C$.
Moreover, it contains both $c_1$ and $c_3$.
It follows that $c_1$ and $c_3$ are in the same component of $C - c_2$ and we can set $\alpha = 2$.

Therefore, we fix $\alpha,\beta,\gamma\in[3]$ such that 
$\set{\alpha,\beta,\gamma}=\{1,2,3\}$, and
$c_\beta$ and $c_\gamma$ lie in the same component of $C-c_\alpha$. 
Let $D$ denote the component of $C-c_\alpha$ containing $c_\beta$ and $c_\gamma$. 

Note that by~\ref{instance:Ri-far-from-C}, we have
\[
\dist_G(w_\alpha,V(D))\geq \dist_G(w_\alpha,V(C)) = \ell.
\]

Consider now the specific case where $\dist_G(w_\alpha,V(D))=\ell$ (see \cref{fig: tripod-ind} left). 
Let $S$ be a shortest $w_\alpha$-$V(D)$ path in $G$. 
Let 
\begin{align*}
&R_\alpha'=R_\alpha,\ w_\alpha'=w_\alpha,\ B_\alpha'=S, &&\textrm{and}\\
&(R_i',w_i',B_i')=(R_i,w_i,B_i)&&\textrm{for each $i\in\set{\beta,\gamma}$.}
\end{align*}
We claim that $(D, \alpha,\set{(R_i',w_i',B_i')}_{i\in[3]})$ is a tripoid. Since $|V(D)| < |V(C)|$ (as $V(D)\subseteq V(C)$ and $c_\alpha\not\in V(D)$), 
it will conclude the proof in this case. 

\begin{figure}[tp]
  \centering  
  \includegraphics{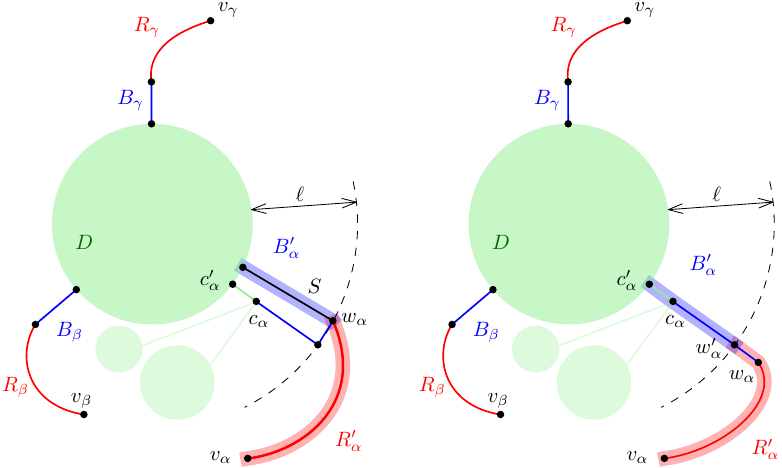}
  \caption{Construction of $(D, \alpha,\set{(R_i',w_i',B_i')}_{i\in[3]})$ within the proof of the \nameref{lemma:tripod}. 
  On the left, we depict the case where $\dist_G(w_\alpha,V(D)) = \ell$ and on the right, we depict the case where $\dist_G(w_\alpha,V(D)) > \ell$.
  We highlight in blue the new path $B_\alpha'$ and in red the new path $R_\alpha'$.
  In the depicted cases, $c_\alpha$ is a cut-vertex of $C$.
  The light green bubbles are components of $C - c_\alpha$, one of them is $D$.
  } 
  \label{fig: tripod-ind}
\end{figure}

Since $D \subseteq C \subseteq Q$ (by~\ref{instance:C-connected} for $\calT$) and $D$ is connected, \ref{instance:C-connected} holds. 
Since $R_i'=R_i$ and $w_i'=w_i$ for all $i\in[3]$, 
\ref{instance:Ri-is-an-xi-yi-path},~\ref{instance:Ri-far-from-C},~\ref{instance:Ri-in-the-ball}, and~\ref{instance:distances-R-R} still hold.
Since $c_\beta,c_\gamma \in V(D)$, since $D$ is a subgraph of $C$, and 
since we have assumed in this case
$\dist_G(w_\alpha,V(D))=\ell$, we have \[
\dist_G(w_i',V(D)) = \ell,
\]
for each $i\in[3]$. 
Recall that by~\ref{instance:Bi-is-an-yi-C-path} for $\calT$, 
$B_i'=B_i$ is a shortest $w_i$-$V(D)$ path in $G$ for each $i\in\set{\beta,\gamma}$. 
Moreover, $S=B_\alpha'$ is defined to be a shortest $w_\alpha$-$V(D)$ path in $G$.
All this implies that~\ref{instance:Bi-is-an-yi-C-path} holds.

Finally, \ref{instance:distances-R-B} holds as we assumed that $\calT$ satisfies a stronger version of it, namely~\ref{instance':distances-R-B}, 
and the only new path
$B_\alpha'$ is exempted.
This completes the proof that $(D,\alpha,\set{(R_i',w_i',B_i')_{i\in[3]}})$ is a tripoid.

Now consider the case that $\dist_G(w_\alpha,V(D))>\ell$ (see \cref{fig: tripod-ind} right). 
Let 
\begin{align*}
w_\alpha'&\qquad 
\textrm{be the neighbor of $w_\alpha$ in $B_\alpha$ and let}\\
c_\alpha'&\qquad 
\textrm{be a neighbor of $c_\alpha$ in $V(D)$.}
\end{align*}
Note that $c_\alpha'$ exists as $C$ is connected and $D$ is a non-null component of $C-c_\alpha$.
Let
\begin{align*}
&R_\alpha'=v_\alpha R_\alpha w_\alpha w_\alpha',\  B_\alpha'=w_\alpha'B_\alpha c_\alpha c_\alpha', &&\textrm{and}\\
&(R_i',w_i',B_i')=(R_i,w_i,B_i)&&\textrm{for each $i\in\set{\beta,\gamma}$.}
\end{align*}
We claim that $(D,\alpha,\set{(R_i',w_i',B_i')}_{i\in[3]})$ is an tripoid. Since $|V(D)| < |V(C)|$ (as $c_\alpha\not\in V(D)$), 
it will conclude the proof in this case. 

Since $D \subseteq C \subseteq Q$ (by~\ref{instance:C-connected} for $\calT$) and $D$ is connected, \ref{instance:C-connected} holds. 
By~\ref{instance:Ri-far-from-C} and~\ref{instance:Bi-is-an-yi-C-path} for $\calT$, we have $\dist_G(V(R_\alpha),V(C)) \geq \ell$ and $\dist_G(w_\alpha',V(C)) = \ell-1$, hence $R_\alpha'$ is a $v_\alpha$-$w_\alpha'$ path in $G$.
Since $R_i'=R_i$ are still $v_i$-$w_i'$ paths in $G$, for each $i\in\set{\beta,\gamma}$, so \ref{instance:Ri-is-an-xi-yi-path} holds.
Since $\calT$ satisfies~\ref{instance:Ri-far-from-C}, and $D\subseteq C$, and $\bigcup_{i\in[3]} V(R_i') = \bigcup_{i\in[3]} V(R_i) \cup \set{w_\alpha'}$, 
the only thing we need to verify to establish~\ref{instance:Ri-far-from-C} is 
$\dist_G(w_\alpha',V(D))\geq \ell$. 
From the case assumption, we obtain
\[
\dist_G(w_\alpha',V(D)) \geq \dist_G(w_\alpha,V(D))-1 \geq \ell+1-1=\ell.
\]
Thus~\ref{instance:Ri-far-from-C} holds. 
Note that by the previous display and the fact that $B_\alpha'$ is a $w_\alpha'$-$V(D)$ path of length $\ell$, we obtain that 
$\dist_G(w_\alpha',V(D))=\ell$. 
In particular, $B_\alpha'$ is a shortest $w_\alpha'$-$V(D)$ path in $G$.
This together with the fact that $c_\beta,c_\gamma\in V(D) \subset V(C)$ implies that~\ref{instance:Bi-is-an-yi-C-path} holds.
Again since $\bigcup_{i\in[3]} V(R_i') =  \bigcup_{i\in[3]} V(R_i') \cup \set{w_\alpha'}$, to verify~\ref{instance:Ri-in-the-ball}, we only need to note that $w_\alpha'\in V(B_\alpha) \subseteq B_G(V(C),\ell) \subseteq B_G(V(Q),\ell)$ (by~\ref{instance:Bi-is-an-yi-C-path} and~\ref{instance:C-connected} for $\calT$). 
Similarly, \ref{instance:distances-R-R} follows since by~\ref{instance':distances-R-B} for $\calT$, we have
\[\dist_G(w_\alpha',V(R_\beta')\cup V(R_\gamma'))\geq\dist_G(V(B_{\alpha}),V(R_\beta')\cup V(R_\gamma'))\geq\ell.
\]
Finally, by~\ref{instance:distances-B-B} for $\calT$,
\[
\dist_G(w_\alpha',V(B_\beta')\cup V(B_\gamma'))\geq \dist_G(V(B_\alpha),V(B_\beta) \cup V(B_\gamma))\geq\ell.
\]
Therefore,~\ref{instance:distances-R-B} follows as $\alpha$ plays the role of the special index $\xi$.
This concludes the proof that $(D,\alpha,\set{(R_i',w_i',B_i')_{i\in[3]}})$ is a tripoid, and thus also finishes the proof of the lemma.
\end{proof}

\section{Augmenting the model}
\label{sec:augmenting}
The main result of this section is \cref{lem:augmenting-the-model}.
It is the main part of the frame-extension step in the final proofs of~\Cref{theorem:main,theorem:main2}.
\cref{lem:augmenting-the-model} roughly states that for each positive integer $\ell$, there exists a larger integer $\ell'$ such that given an $\ell'$-fat model $\mathcal M$ of a subcubic graph $H$ in a graph $G$, the following holds. 
Presume there is a path $P$ that is far enough from all the branch sets of $\calM$ and all the branch paths of $\calM$ except one $M_{yz}$ to which $P$ is close. 
Then one can construct an $\ell$-fat model $\mathcal N$ of one of the two graphs $H'$ and $H''$ depicted in \cref{fig: ModelH}
The proof of \cref{lem:augmenting-the-model} fundamentally relies on the \nameref{lemma:tripod}.

\begin{figure}[tp]
  \centering  
  \includegraphics{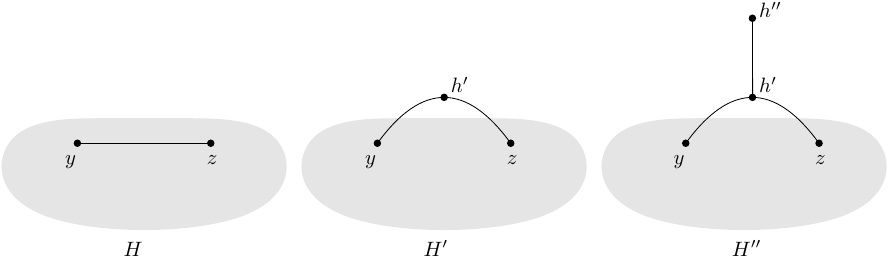}
  \caption{Given a graph $H$ and an edge $yz$ in $H$, 
  the graph $H'$ is obtained from $H$ by subdividing $yz$ once. 
  If $h'$ is the new vertex, then the graph $H''$ is obtained from $H'$ by attaching to $h'$ a new vertex $h''$.
  These extensions of $H$ are considered in~\cref{lem:augmenting-the-model}.}
  \label{fig: ModelH}
\end{figure}

Let $G$ and $H$ be graphs and let $\calM=(M_x \mid x\in V(H)\cup E(H))$ be a model of $H$ in $G$. 
Let $yz$ be an edge of $H$.
Even if $\calM$ is $\ell$-fat for some large integer $\ell$, we do not have control over how the path $M_{yz}$ behaves with respect to $M_y$.
Namely, $M_{yz}$ may approach $M_y$ arbitrarily many times before ultimately leaving for the other endpoint in $M_z$.
Such behavior is problematic, therefore to deal with it, we introduce the notion of $\ell$-clean models, and we prove that by losing at most a small factor of fatness, we can assume that our model is clean, see~\Cref{lemma:fat-to-clean}. 

Let $\calM=(M_x \mid x\in V(H)\cup E(H))$ be a model of a graph $H$ in a graph $G$ and $\ell$ be a nonnegative integer. 
We say that $\calM$ is \defin{simple} if for every $uv \in E(H)$, the branch path $M_{uv}$ is a $V(M_u)$-$V(M_v)$ path in $G$.
We say that $\calM$ is \defin{$\ell$-clean} 
if it is simple and for all $v\in V(H)$ and $e\in E(H)$ such that $v$ is incident to $e$ in $H$, for each $i \in\set{0,\ldots,\ell}$, there is exactly one vertex of $M_e$ at distance $i$ to $M_v$ in $G$.
See~\cref{fig: clean}.

\begin{figure}[tp]
  \centering  
  \includegraphics{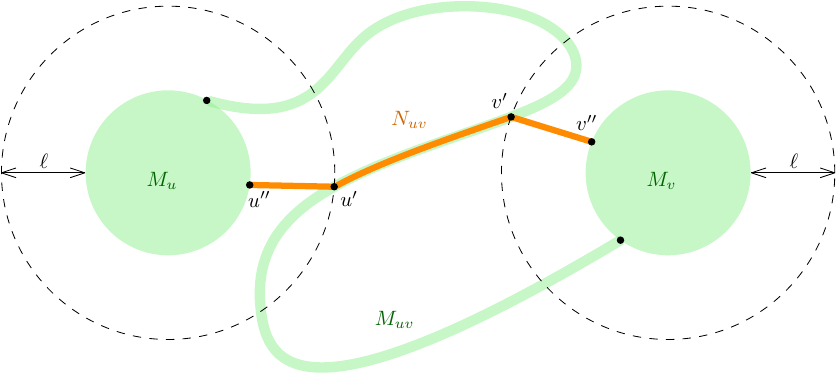}
  \caption{Let $H$ be a two-vertex path $uv$. The model $\{M_u,M_v,M_{uv}\}$ depicted in green is not $\ell$-clean.
  On the other hand, replacing $M_{uv}$ by $N_{uv}$, depicted in orange, gives an $\ell$-clean model. Notation is consistent with the proof of~\cref{lemma:fat-to-clean}, which shows how to transform a fat model into a fat and clean model.}
  \label{fig: clean}
\end{figure}

\begin{lemma}
\label{lemma:fat-to-clean}
Let $q$ and $\ell$ be integers with $q\geq \ell \geq 1$.
Let $G$ and $H$ be graphs, and let $\calM=(M_x \mid x\in V(H)\cup E(H))$ be a $(q+2\ell)$-fat model of $H$ in $G$. 
Then there exists a model $\calN=(N_x \mid x\in V(H)\cup E(H))$ of $H$ in $G$ such that 
\begin{enumerate}
\item $N_v = M_v$ for each $v\in V(H)$,
\label{lemma:fat-to-clean:item:branchsets}
\label{lemma:fat-to-clean:item:paths}
\item $\calN$ is $q$-fat, and 
\label{lemma:fat-to-clean:item:fat}
\item $\calN$ is $\ell$-clean.
\label{lemma:fat-to-clean:item:clean}
\end{enumerate}
\end{lemma}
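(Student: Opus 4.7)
The plan is to leave the branch sets untouched and to replace each branch path independently by a more controlled one that cleanly enters each of its two endpoint branch sets via a shortest path. Fix an edge $e = uv \in E(H)$. Since $M_e$ is connected and meets both $V(M_u)$ and $V(M_v)$, I first extract from $M_e$ a $V(M_u)$-$V(M_v)$ path $P_e$. Walking along $P_e$ from its endpoint in $V(M_u)$, the distance to $V(M_u)$ starts at $0$, ends at a value $\geq q + 2\ell$, and changes by at most $1$ from one vertex to the next; the last vertex $w_u^e$ on $P_e$ at distance $\leq \ell$ from $V(M_u)$ therefore sits at distance exactly $\ell$. Defining $w_v^e$ symmetrically, and letting $S_u^e$ and $S_v^e$ be any shortest $V(M_u)$-$w_u^e$ and $V(M_v)$-$w_v^e$ paths in $G$ (each of length $\ell$), I set
\[
N_e \;:=\; S_u^e \,\cup\, (w_u^e P_e w_v^e) \,\cup\, S_v^e,
\]
and define $\calN$ by $N_v := M_v$ for $v \in V(H)$ together with these $N_e$ for $e \in E(H)$.

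The verification rests on two simple containments: $V(S_u^e) \subseteq B_G(V(M_u), \ell)$ (and symmetrically for $S_v^e$) by the shortest-path property, while the middle $w_u^e P_e w_v^e$ meets $B_G(V(M_u), \ell)$ in $\{w_u^e\}$ only (and symmetrically at the other end). Combining these with the fatness hypothesis, I will first deduce that the three pieces of $N_e$ share only their joining vertices, so $N_e$ is a $V(M_u)$-$V(M_v)$ path and $\calN$ is simple. The $\ell$-cleanness assertion then follows from the standard fact that along a shortest $V(M_u)$-$w_u^e$ path of length $\ell$, the distance to $V(M_u)$ takes each value $0, 1, \dots, \ell$ exactly once, while the remaining pieces of $N_e$ lie at distance $> \ell$ from $V(M_u)$ (except for $w_u^e$ itself, already counted); symmetric reasoning at $v$ completes cleanness. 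Next, using $V(N_e) \subseteq B_G(V(M_u), \ell) \cup V(M_e) \cup B_G(V(M_v), \ell)$ together with the fact that $(q+2\ell)$-fatness of $\calM$ applies to every pair of distinct elements of $V(H) \cup E(H)$ that is not a vertex-edge incidence---in particular to any two distinct edges, even when they share an endpoint---a short case analysis will give each required distance bound of $q$, via the gap $(q + 2\ell) - 2\ell$.

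The only delicate step is the case of two distinct edges $e, f$ sharing a common endpoint $z$: the pieces $S_z^e$ and $S_z^f$ both live inside $B_G(V(M_z), \ell)$ and could a priori meet outside $V(M_z)$, which would violate both (m1) and the required fatness. This is where the apparently wasteful slack $2\ell$ in the hypothesis $(q + 2\ell)$-fatness earns its keep. Because $\calM$ is $(q + 2\ell)$-fat between any two distinct edges, we have $\dist_G(w_z^e, w_z^f) \geq \dist_G(V(M_e), V(M_f)) \geq q + 2\ell$; combining this with $\length(S_z^e) = \length(S_z^f) = \ell$ and the triangle inequality forces $\dist_G(V(S_z^e), V(S_z^f)) \geq q \geq 1$, so the two paths are disjoint and the proof closes.
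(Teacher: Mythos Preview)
Your construction and overall approach coincide with the paper's: keep the branch sets, and for each edge $e=uv$ replace the branch path by a shortest tail of length $\ell$ at the $u$-end, a middle segment inside $M_e$ that stays outside $B_G(V(M_u),\ell)\cup B_G(V(M_v),\ell)$, and a shortest tail of length $\ell$ at the $v$-end. Your verifications of simplicity and $\ell$-cleanness are correct.

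There is, however, a small gap in the fatness check for two edges $e=zv$ and $f=zw$ sharing an endpoint $z$. With only the union containment $V(N_e)\subseteq B_G(V(M_z),\ell)\cup V(M_e)\cup B_G(V(M_v),\ell)$, three piece-pairings fail the generic ``$(q+2\ell)-2\ell$'' argument, not just the one you single out: besides $S_z^e$ versus $S_z^f$, you also cannot directly bound $S_z^e$ against the middle of $N_f$ (which lies in $V(M_f)$), nor the symmetric pair, because $z$ is incident to $f$ and to $e$, so fatness of $\calM$ gives nothing for $(z,f)$ or $(e,z)$. The fix is already implicit in your own delicate-case computation: since $w_z^e\in V(M_e)$ and $\length(S_z^e)=\ell$, you have $S_z^e\subseteq B_G(V(M_e),\ell)$, and hence the simpler global containment $V(N_e)\subseteq B_G(V(M_e),\ell)$. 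This is exactly what the paper uses, and it collapses the whole fatness verification to a single line: for any pair $x,y$ that is not a vertex-edge incidence, $\dist_G(V(N_x),V(N_y))\geq \dist_G(V(M_x),V(M_y))-2\ell\geq q$, with no case analysis needed.
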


\begin{proof} 
For each $v\in V(H)$, let $N_v=M_v$ and 
for each $uv\in E(H)$, we define 
$N_{uv}$ as follows.  
Note that $M_{uv}$ is a connected graph containing a vertex of $B_G(V(M_u),\ell)$ and a vertex of $B_G(V(M_v),\ell)$, and as $\calM$ is $(q+2\ell)$-fat, these two balls are disjoint.
In particular, there exists a $B_G(V(M_u),\ell)$-$B_G(V(M_v),\ell)$ path $W_{uv}$ in $M_{uv}$.
Let $u'$ denote the endpoint of $W_{uv}$ in $B_G(V(M_u),\ell)$, 
let $W_u$ be a shortest $V(M_u)$-$u'$ path in $G$, and let $u''$ be the endpoint of $W_u$ in $V(M_u)$.
Similarly, let $v'$ denote the endpoint of $W_{uv}$ in $B_G(V(M_v),\ell)$, 
let $W_v$ be a shortest $V(M_v)$-$v'$ path in $G$, and let $v''$ be the endpoint of $W_v$ in $V(M_v)$.
We define
\[
N_{uv} = u'' W_u u' W_{uv} v' W_vv''.
\]
Note that $N_{uv}$ is a $V(N_u)$-$V(N_v)$ path in $G$.
This completes the construction of $\calN$.

We claim that $\calN$ is a model of $H$ in $G$ satisfying the assertion of the lemma. 
First note that for all $v\in V(H)$ and $e\in E(H)$ such that $v$ is incident to $e$, we have $V(N_v) \cap V(N_e)\neq\emptyset$. 
For each $uv \in E(H)$, $W_u$ and $W_v$ both have length $\ell$ and $W_{uv}$ is a subgraph of $M_{uv}$, hence 
\begin{equation}
V(N_{uv}) \subseteq B_G(V(M_{uv}),\ell).
\label{eq:fat-to-clean-edges}
\end{equation}
For all $x,y\in V(H)\cup E(H)$ such that $\{x,y\} \neq \{v,e\}$ where $v \in V(H)$, $e \in E(H)$, and $v$ is incident to $e$ in $H$, we have 
\begin{align*}
\dist_G(V(N_x),V(N_y)) 
&\geq \dist_G(B_G(V(M_x),\ell),B_G(V(M_y),\ell)) && \text{by \ref{lemma:fat-to-clean:item:branchsets} and~\eqref{eq:fat-to-clean-edges}}\\
&\geq \dist_G(V(M_x),V(M_y)) - 2\ell\\
&\geq q+2\ell-2\ell = q && \text{as $\calM$ is $(q+2\ell)$-fat}.
\end{align*}
Thus, $\calN$ is a $q$-fat model of $H$ in $G$ and~\ref{lemma:fat-to-clean:item:fat} follows.

Item~\ref{lemma:fat-to-clean:item:branchsets} follows by construction. 
Also by construction, $\calN$ is simple.
For each $uv \in E(H)$, since $W_{uv}$ is internally disjoint from $B_G(V(M_u),\ell)\cup B_G(V(M_v),\ell)$, we have $V(N_{uv}) \cap B_G(V(M_u),\ell)= V(W_u)$. 
Since $W_u$ is a shortest path from $V(M_u)$ to a vertex at distance $\ell$ from $V(M_u)$, 
we conclude that $\mathcal{N}$ is $\ell$-clean.
This completes the proof of~\ref{lemma:fat-to-clean:item:clean}.
\end{proof}

We now have everything in hand to state and prove the main result of the section.

\begin{lemma}\label{lem:augmenting-the-model}
Let $\ell$ be a positive integer. 
Let $G$ be a graph, let $H$ be a subcubic graph, and let $\calM = (M_x \mid x\in V(H)\cup E(H))$ be an $8\ell$-fat and $4\ell$-clean model of $H$ in $G$.
Let $a \in V(G)$ and $yz \in E(H)$, and let $P$ be an $a$-$B_G(V(M_{yz}),4\ell)$ path in $G$ with
\begin{equation}\label{eq:P-far-from-the-model-vertices}
     \dist_G\left(V(P), \bigcup_{x\in V(H)} V(M_x)\right) \geq 8\ell \tag{$\dagger$}
\end{equation}
and
\begin{equation}\label{eq:P-far-from-the-model-edges}
     \dist_G\left(V(P), \bigcup_{x\in E(H) \setminus\{yz\}} V(M_x)\right) \geq 4\ell. \tag{$\dagger\dagger$}
\end{equation}
Let $H'$ be the graph obtained from $H$ by subdividing the edge $yz$ once and let $h'$ denote the new vertex, and let $H''$ be the graph obtained from $H'$ by attaching a new vertex $h''$ adjacent only to $h'$ in $H''$.
Then either 
there is an $\ell$-fat model $(N_x \mid x\in V(H')\cup E(H'))$ of $H'$ in $G$ such that 
\begin{enumerateAlpha'}
\item $N_x=M_x$ for each $x \in V(H)\cup E(H)\setminus\set{yz}$ and  \label{item:modelH'-other-branchsets}
\item $a\in V(N_{h'})$ and $N_{h'}$ has radius at most $4\ell$,  \label{item:modelH'-h'}
\end{enumerateAlpha'}
or there is an $\ell$-fat model $(N_x \mid x\in V(H'')\cup E(H''))$ of $H''$ in $G$ such that 
\begin{enumerateAlpha''}
\item $N_x=M_x$ for each $x \in V(H)\cup E(H)\setminus\set{yz}$,  \label{item:modelH''-other-branchsets}
\item $N_{h'}$ has radius at most $4\ell$, and \label{item:modelH''-h'}
\item $V(N_{h''}) = \{a\}$.  \label{item:modelH''-h''}
\end{enumerateAlpha''}
\end{lemma}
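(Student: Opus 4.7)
The plan is to apply the Tripod Lemma (\cref{lemma:tripod}). Let $p$ denote the endpoint of $P$ in $B_G(V(M_{yz}),4\ell)$, let $q\in V(M_{yz})$ be a vertex with $\dist_G(p,q)\leq 4\ell$, and let $S$ be the path from $a$ to $q$ in $G$ obtained by appending a shortest $p$-$q$ path to $P$. Write $M_{yz} = u_0u_1\cdots u_k$ with $u_0\in V(M_y)$ and $u_k\in V(M_z)$, and let $j$ be the index with $q = u_j$. The $8\ell$-fatness of $\calM$ forces $k \geq 8\ell$, and the $4\ell$-cleanness guarantees that the first and last $4\ell+1$ vertices of $M_{yz}$ lie at the expected geodesic distances from $V(M_y)$ and $V(M_z)$ in $G$.

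We apply \cref{lemma:tripod} with $v_1, v_2$ being vertices of $M_{yz}$ on opposite sides of $q$, with $v_3$ being a vertex of $P$, and with $Q$ a subpath of $M_{yz}$ positioned between $v_1$ and $v_2$. In the generic case $j\geq 4\ell$ and $k-j\geq 4\ell$, we set $v_1 = u_{j-4\ell}$, $v_2 = u_{j+4\ell}$, we choose $v_3 \in V(P)$ at an appropriate distance from $V(M_{yz})$, and we take $Q$ to be a short subpath of $M_{yz}$ around $q$. Using tripod parameters $d=4\ell$ and $\ell$, the three hypotheses are verified by combining the $4\ell$-cleanness (to control distances along $M_{yz}$) with the separation conditions \eqref{eq:P-far-from-the-model-vertices} and \eqref{eq:P-far-from-the-model-edges} on $P$ (to handle distances involving $v_3$). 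The Tripod Lemma then yields $Z, P_1, P_2, P_3$, from which we build an $\ell$-fat model $\calN$ of $H''$: set $N_x = M_x$ for each $x\in V(H)\cup E(H)\setminus\set{yz}$, $N_{h'} = Z$, $N_{h''} = \set{a}$, and take $N_{yh'}$, $N_{zh'}$, $N_{h'h''}$ to be the paths obtained by extending $P_1, P_2, P_3$ to $V(M_y)$, $V(M_z)$, $a$ along the relevant subpaths of $M_{yz}$ and $S$. The radius bound on $N_{h'}$ follows directly from item~\ref{lem:tripod-lemma:item:Z-has-bounded-radius} of \cref{lemma:tripod}.

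The symmetric tripod setup breaks down when $q$ is very close to an endpoint of $M_{yz}$, say $j<4\ell$. In this degenerate case, we adapt the setup (for instance, allowing $v_1 \in V(M_y)$ or reducing the offset of $v_1$) to still produce a model of $H''$ whenever possible; when even this fails, we construct a model of $H'$ directly, exploiting that $a$ is connected to $V(M_y)$ via the short route consisting of $S$ reversed followed by the subpath $u_j \cdots u_0$. The $H'$ construction absorbs the problematic neighborhood into a connected $N_{h'}$ of radius at most $4\ell$ containing $a$, and uses the remaining subpaths of $M_{yz}$ together with a controlled portion of $S$ to form $N_{yh'}$ and $N_{zh'}$.

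The principal obstacle is the pairwise distance bookkeeping needed to verify the $\ell$-fatness of $\calN$. The Tripod Lemma directly provides the $\ell$-separation between the three new paths $P_1, P_2, P_3$, and hence between $N_{yh'}, N_{zh'}, N_{h'h''}$. The separation between the new components and the unchanged branch sets $M_v$ for $v\in V(H)$ and branch paths $M_e$ for $e\in E(H)\setminus\set{yz}$ follows from the $8\ell$-fatness of $\calM$ combined with the hypotheses \eqref{eq:P-far-from-the-model-vertices} and \eqref{eq:P-far-from-the-model-edges} on $P$: all new components lie within a controlled neighborhood of $V(M_{yz})\cup V(S)$, which is at least $\ell$-far from every other $M_v$ and $M_e$ by these hypotheses. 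The $7\ell$ slack in the fatness budget of $\calM$ absorbs the $\Oh(\ell)$ perturbations introduced by the tripod extraction.
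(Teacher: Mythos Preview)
Your plan has a genuine gap in verifying the hypotheses of the Tripod Lemma. You propose $v_1=u_{j-4\ell}$ and $v_2=u_{j+4\ell}$, interior vertices of $M_{yz}$, and you appeal to $4\ell$-cleanness ``to control distances along $M_{yz}$''. But $4\ell$-cleanness only says that for each $i\in\{0,\dots,4\ell\}$ there is a unique vertex of $M_{yz}$ at distance $i$ from $M_y$ (and from $M_z$); it gives no information whatsoever about the metric behaviour of $M_{yz}$ away from its endpoints. The branch path $M_{yz}$ is not a geodesic in $G$, so in general $\dist_G(u_{j-4\ell},u_{j+4\ell})$ can be as small as $1$, and likewise $\dist_G(v_i,V(Q))$ can be arbitrarily small when $Q$ is your ``short subpath of $M_{yz}$ around $q$''. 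Thus neither \eqref{lem:tripod-asumpt-dist-xi-Q-large} nor \eqref{lem:tripod-asumpt-dist-xi-xj} can be verified for your choice of $v_1,v_2,Q$. Your degenerate case (``$j<4\ell$'') is not the right case split: the real obstruction is not where $q$ sits along $M_{yz}$, but that $M_{yz}$ can fold back on itself in $G$.

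The paper avoids this by never placing $v_1,v_2$ in the interior of $M_{yz}$. Instead it takes the endpoints $v_y\in V(M_y)$ and $v_z\in V(M_z)$ as $v_1,v_2$, so that \eqref{lem:tripod-asumpt-dist-xi-xj} follows immediately from $8\ell$-fatness, and $v_3=w$, so that \eqref{eq:P-far-from-the-model-vertices} gives the remaining separations. The case split is also different: for each $x\in\{y,z\}$ one defines $Q_x$ as the subpath of $M_{yz}$ from $v_x$ to the first vertex at distance exactly $4\ell$ from $w$. If $\dist_G(V(Q_y),V(Q_z))\geq\ell$, then $Q_y,Q_z$ already serve as $N_{yh'},N_{zh'}$ and a direct construction (no Tripod Lemma) yields a model of $H'$ or $H''$ depending on whether $\dist_G(a,w)<2\ell$. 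Only when $\dist_G(V(Q_y),V(Q_z))<\ell$ does one invoke the Tripod Lemma, and then the short $Q_y$--$Q_z$ bridge lets one assemble a connected $Q$ from the middle of $M_{yz}$ that is provably $3\ell$-far from both $M_y$ and $M_z$ (here the $4\ell$-cleanness is used, but only at the endpoints, as intended). This is the missing idea in your proposal.
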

\begin{proof}
    Let $w$ be the endpoint of $P$ in $B_G(V(M_{yz}), 4\ell)$. 
    Note that as internal vertices of $P$ are not in $B_G(V(M_{yz}), 4\ell)$, either $w\neq a$ and $\dist_G(V(P),V(M_{yz}))=4\ell$, or $w=a$, $V(P) = \{a\}$ and $a \in B_G(V(M_{yz}), 4\ell)$.
    By~\eqref{eq:P-far-from-the-model-vertices} we have,
    \begin{equation}
    \label{eq:dist-w-My-Mz}
    \dist_G(w,V(M_y\cup M_z))\geq 8\ell.
    \end{equation}
    For each $x \in \{y,z\}$, 
    we define the following.
    Let $v_x$ be the endpoint of $M_{yz}$ in $M_x$.
    Let $q_x$ be the first vertex of $M_{yz}$ starting from $v_x$ such that $\dist_G(w,q_x) = 4\ell$. 
    Note that $q_x$ is well-defined by~\eqref{eq:dist-w-My-Mz}. 
    Let $Q_x$ be the $v_x$-$q_x$ subpath of $M_{yz}$ and let $W_x$ be a $w$-$q_x$ path of length $4\ell$ in $G$. Refer to \cref{fig: augmenting_config}. 

    \begin{figure}[tp]
    \centering  
    \includegraphics{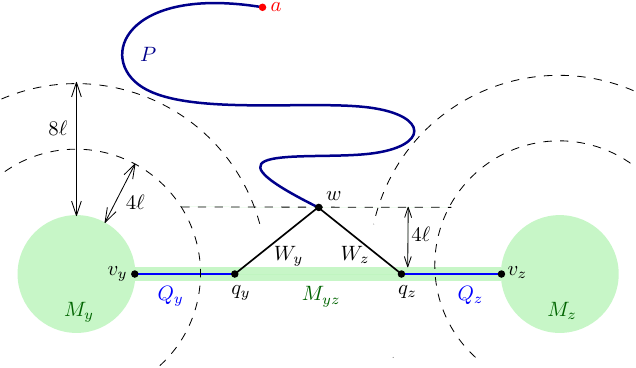}
    \caption{A setup in the proof of \cref{lem:augmenting-the-model}.
    Note that it is possible that $q_y = q_z$, and that in general, $q_y$ and $q_z$ do not necessarily belong to the balls of radius $8\ell$ around $M_y$ and $M_z$.
    }  
    \label{fig: augmenting_config}
    \end{figure}
    
    Note that by~\eqref{eq:dist-w-My-Mz}, for each $x \in \{y,z\}$,
    \begin{align*}
        \dist_G(q_x, V(M_y \cup M_z)) 
        &\geq \dist_G(w,V(M_y\cup M_z)) - \dist_G(w,q_x)
        \geq 8\ell - 4\ell = 4\ell.
    \end{align*}
    For future reference, we write
    \begin{equation}
        \dist_G(\{q_y,q_z\}, V(M_y \cup M_z)) \geq  4\ell.
    \label{eq:q-far-from-M}
    \end{equation}
    We have $\dist_G(q_y,V(M_z)) \geq 4\ell$ (by~\eqref{eq:q-far-from-M}) and $Q_y = q_yM_{yz}v_y$, hence since $\calM$ is $4\ell$-clean, $B_G(V(M_z), 4\ell) \cap V(Q_y) \subset \{q_y\}$.
    This and a symmetric argument gives,
    \begin{equation}
    \label{eq:Q-far-from-M}
        \dist_G(V(Q_y), V(M_z)) \geq 4\ell \ \ \text{ and } \ \ \dist_G(V(Q_z), V(M_y)) \geq 4\ell.
    \end{equation}
    Additionally, by the definitions of $Q_y$ and $Q_z$,
    \begin{equation}\label{eq:aPw-far-Myz}
        \dist_G(V(P), V(Q_y \cup Q_z)) \geq 4\ell.
    \end{equation}
    
    We consider three cases depending on the distances in $G$ between $Q_y$ and $Q_z$, and between $a$ and $w$.

    \textcolor{red}{Case 1.} $\dist_G(V(Q_y),V(Q_z)) \geq \ell$ and $\dist_G(a,w) \leq 2\ell-1$.

    Let $P'$ be a shortest $a$-$w$ path in $G$. 
    For every $x\in V(H')\cup E(H')$ let
    \[
    N_x = \begin{cases}
    M_x&\textrm{if $x \in V(H)\cup E(H)\setminus\set{yz}$,}\\
    W_y\cup W_z \cup P'&\textrm{if $x=h'$,}\\
    Q_y&\textrm{if $x=yh'$,}\\
    Q_z&\textrm{if $x=zh'$.}
    \end{cases}
    \]
    See \cref{fig: augmenting1} for an illustration.
    \begin{figure}[tp]
    \centering  
    \includegraphics{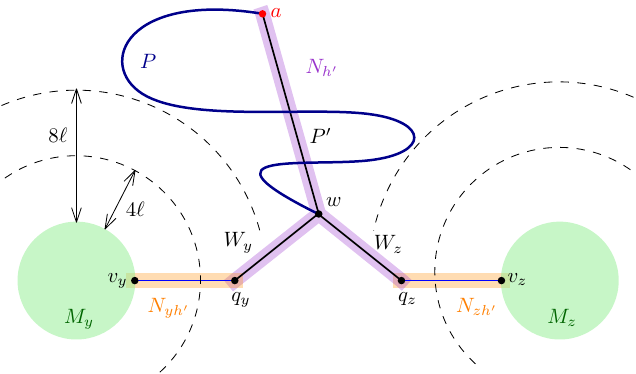}
    \caption{The model $\calN$ of $H'$ that we construct in Case 1 of the proof of~\cref{lem:augmenting-the-model}.}  
    \label{fig: augmenting1}
    \end{figure}

    We claim that $\calN = (N_x \mid x\in V(H')\cup E(H'))$ is a model of $H'$ in $G$, $\calN$ is $\ell$-fat, and $\calN$ satisfies \ref{item:modelH'-other-branchsets} and~\ref{item:modelH'-h'}. 
    Item~\ref{item:modelH'-other-branchsets} is satisfied by the first line of the definition of $\mathcal{N}$. 
    The first statement of~\ref{item:modelH'-h'} holds as $a\in V(P')\subseteq V(N_{h'})$.
    For the second statement, recall that $\length(W_y) = \length(W_z) = 4\ell$, and $\length(P') \leq 2\ell-1$.
    Since $w \in V(W_y) \cap V(W_z) \cap V(P')$, the radius of $N_{h'}$ is at most $4\ell$, as desired.
    Therefore, all we need to argue in this case is that $\calN$ is a model of $H'$ and $\calN$ is $\ell$-fat.

    Let $x$ and $x'$ be distinct elements of $V(H') \cup E(H')$.
    We consider all possible choices of 
    $x$ and $x'$ up to swapping them. 
    If $x$ and $x'$ are a vertex and an edge that are incident in $H'$, then we will show that $V(N_x) \cap V(N_{x'}) \neq \emptyset$.
    Otherwise, we will show that $\dist_G(V(N_x),V(N_{x'}))\geq\ell$.
    This will conclude the proof that $\calN$ is an $\ell$-fat model of $H'$ as $\ell > 0$.

    First, suppose that $x \in V(H')$, $x' \in E(H')$, and they are incident in $H'$.
    If $x\in V(H)$ and $x'\in E(H)$, then 
    $x$ and $x'$ are incident in $H$, 
    $N_x=M_x$, $N_{x'}=M_{x'}$, so since $\calM$ is a model of $H$, we have
    $V(N_x)\cap V(N_{x'}) = V(M_x)\cap V(M_{x'})\neq\emptyset$. 
    If $x\in \set{y,z}$ and $x'=xh'$, then 
    $N_x=M_x$ and $N_{x'}=Q_x$ so $v_x\in V(N_x) \cap V(N_{x'})$. 
    If $x=h'$ and $x' \in \{yh',zh'\}$, then $q_y,q_z \in V(N_x)$ and $\{q_y,q_z\} \cap V(N_{x'}) \neq \emptyset$, so $V(N_x) \cap V(N_{x'}) \neq \emptyset$.
    From now on, assume that $x$ and $x'$ are not a vertex and an edge that are incident in $H'$, and we prove that $\dist_G(V(N_x),V(N_{x'}))\geq\ell$.

    Next, we consider all the cases with $x \in V(H) \cup E(H) \setminus\{yz\}$. Fix such an $x$. 
    If $x' \in V(H) \cup E(H) \setminus\{yz\}$, then 
    \[
    \dist_G(V(N_x), V(N_{x'}))= \dist_G(V(M_x), V(M_{x'}))\geq 8\ell
    \]
    as $\calM$ is an $8\ell$-fat model.
    
    If $x \in V(H)$ and $x' = h'$, then 
    since $V(N_{h'})\subseteq B_G(w,4\ell)$, we have 
    \begin{align*}
    \dist_G(V(N_x),V(N_{h'})) 
    &\geq \dist_G(V(M_x),B_G(w,4\ell))\\
    &\geq \dist_G(V(M_x),w) - 4\ell\\
    &\geq 8\ell-4\ell=4\ell&&\textrm{by~\eqref{eq:P-far-from-the-model-vertices}.}
    \end{align*}
    If $x \in E(H) \setminus \{yz\}$ and $x' = h'$, then
    \begin{align*}
    \dist_G(V(N_x),V(P')) 
    &\geq \dist_G(V(M_x),B_G(w,2\ell-1))\\
    &\geq \dist_G(V(M_x),w)-2\ell\\
    &\geq 4\ell-2\ell = 2\ell&&\textrm{by~\eqref{eq:P-far-from-the-model-edges},}
    \end{align*} 
    and
    \begin{align*}
    \dist_G(V(N_x),V(W_y \cup W_z)) 
    &\geq \dist_G(V(M_x),B_G(V(M_{yz}),4\ell)\\
    &\geq \dist_G(V(M_x),V(M_{yz}))-4\ell\\
    &\geq 8\ell-4\ell = 4\ell&&\textrm{as $\calM$ is $\ell$-fat,}
    \end{align*}
    and therefore,
    \begin{align*}
    \dist_G(V(N_x),V(N_{h'})) 
    &= \max\{\dist_G(V(N_x),V(P')) , \dist_G(V(N_x),V(W_y \cup W_z)) \} \geq 2\ell.
    \end{align*} 
    If $x \notin \{y,z\}$ and $x' \in\set{yh',zh'}$, then since $\calM$ is $8\ell$-fat, we have
    \[\dist_G(V(N_x),V(N_{x'})) \geq \dist_G(V(M_x),V(Q_{y}\cup Q_z)) \geq \dist_G(V(M_x), V(M_{yz})) \geq 8\ell.
    \]
    If $(x,x')=(y,zh')$, then by~\eqref{eq:Q-far-from-M}, we have
    \[
    \dist_G(V(N_y),V(N_{zh'})) 
    = \dist_G(V(M_y), V(Q_z))
    \geq 4\ell.
    \]
    Symmetrically, for $(x,x')=(z,yh')$, we have
    \[
    \dist_G(V(N_z),V(N_{yh'})) 
    = \dist_G(V(M_z), V(Q_y))
    \geq 4\ell.
    \]
    This exhausts the cases with $x \in V(H) \cup E(H) \setminus\{y,z\}$.
    The only remaining case is $(x,x') = (yh', zh')$
    in which by the assumption of Case 1, we have
    \[
    \dist_G(V(N_{yh'}), V(N_{zh'})) = \dist_G(V(Q_y), V(Q_z)) \geq \ell.
    \]
    This concludes the proof that $\calN$ is an $\ell$-fat model of $H'$ in $G$ satisfying \ref{item:modelH'-other-branchsets} and~\ref{item:modelH'-h'}, and so the proof in Case 1 is completed.

    \textcolor{red}{Case 2.} $\dist_G(V(Q_y),V(Q_z)) \geq \ell$ and $\dist_G(a,w) \geq 2\ell$.

    For every $x\in V(H'')\cup E(H'')$, let
    \[
    N_x = \begin{cases}
    M_x&\textrm{if $x \in V(H)\cup E(H)\setminus\set{yz}$,}\\
    W_y\cup W_z&\textrm{if $x=h'$,}\\
    (\set{a},\emptyset)&\textrm{if $x=h''$,}\\
    Q_y&\textrm{if $x=yh'$,}\\
    Q_z&\textrm{if $x=zh'$,}\\
    P&\textrm{if $x=h'h''$.}
    \end{cases}
    \]
    See \cref{fig: augmenting2} for an illustration.
    We claim that $\calN = (N_x \mid x\in V(H'')\cup E(H''))$ is a model of $H''$ in $G$, $\calN$ is $\ell$-fat, and $\calN$ satisfies \ref{item:modelH''-other-branchsets}, \ref{item:modelH''-h'}, and~\ref{item:modelH''-h''}.
    Items~\ref{item:modelH''-other-branchsets} and~\ref{item:modelH''-h''} are satisfied by construction. 
    Item~\ref{item:modelH''-h'} is satisfied since $w \in V(W_y) \cap V(W_z)$ and $\length(W_y) = \length(W_z) = 4\ell$.
    Therefore, all we need to argue in this case is that $\calN$ is a model of $H''$ and $\calN$ is $\ell$-fat.

    \begin{figure}[tp]
    \centering  
    \includegraphics{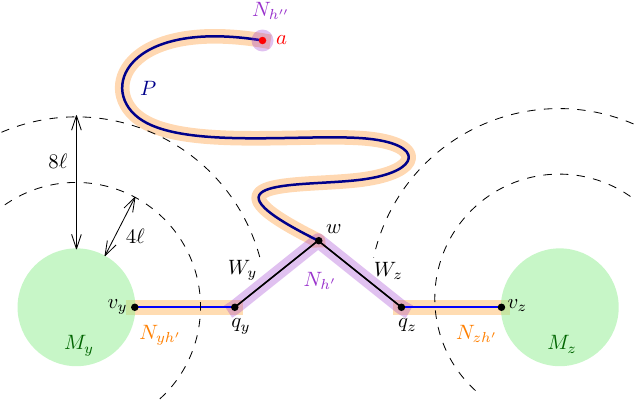}
    \caption{The model $\calN$ of $H''$ that we construct in Case 2 of the proof of~\cref{lem:augmenting-the-model}.
    }  
    \label{fig: augmenting2}
    \end{figure}

    \begin{figure}[tp]
    \centering  
    \includegraphics{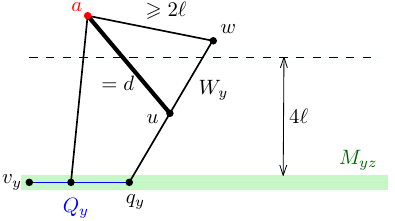}
    \caption{
    An illustration to the argument presented in Case 2 of~\Cref{lem:augmenting-the-model}, which shows that $\dist_G(V(N_{h'}, V(N_{h'h''})) > \ell$.
    }  
    \label{fig: augmenting-u-d}
    \end{figure}
    
    Let $x$ and $x'$ be distinct elements of $V(H'') \cup E(H'')$.
    We consider all possible choices of $x$ and $x'$ up to swapping them. 
    If $x$ and $x'$ are a vertex and an edge that are incident in $H''$, then we will show that $V(N_x) \cap V(N_{x'}) \neq \emptyset$.
    Otherwise, we will show that $\dist_G(V(N_x),V(N_{x'}))\geq\ell$.
    This will conclude the proof that $\calN$ is an $\ell$-fat model of $H''$ as $\ell > 0$.

    First, suppose that $x \in V(H'')$, $x' \in E(H'')$, and they are incident in $H''$.
    If $x\in V(H)$ and $x'\in E(H)$, then 
    $x$ and $x'$ are incident in $H$, 
    $N_x=M_x$, $N_{x'}=M_{x'}$, so since $\calM$ is a model of $H$, we have
    $V(N_x)\cap V(N_{x'}) = V(M_x)\cap V(M_{x'})\neq\emptyset$. 
    If $x\in \set{y,z}$ and $x'=xh'$, then 
    $N_x=M_x$ and $N_{x'}=Q_x$ so $v_x\in V(N_x) \cap V(N_{x'})$. 
    If $x=h'$ and $x' \in \{yh',zh',h'h''\}$, then $q_y,q_z,w \in V(N_x)$ and $\{q_y,q_z,w\} \cap V(N_{x'}) \neq \emptyset$, so $V(N_x) \cap V(N_{x'}) \neq \emptyset$.
    If $x = h''$ and $x' = h'h''$, then $a \in V(N_{x}) \cap V(N_{x'})$.
    From now on, assume that $x$ and $x'$ are not a vertex and an edge that are incident in $H''$, and we prove that $\dist_G(V(N_x),V(N_{x'}))\geq\ell$.

    Since $N_{h'}$, $N_{yh'}$, and $N_{zh'}$ are subgraphs of respective branch sets chosen in Case 1, 
    for all the pairs $(x,x')$ such that $x,x' \in (V(H'')\cup E(H''))\setminus\{h'', h'h''\} = V(H')\cup E(H')$ 
    the argument stays the same as in Case 1 
    (note that in Case 1, we use the assumption $\dist_G(a,w) \leq 2\ell-1$ only to prove~\ref{item:modelH'-h'}). 
    Thus, we only need to verify pairs $(x,x')$ with $x'\in\set{h'',h'h''}$.

    If $x\in V(H) \cup E(H)\setminus\{yz\}$ and $x' \in \{h'', h'h''\}$, 
    then since $N_{h''}\subseteq N_{h'h''} = P$ and by~\eqref{eq:P-far-from-the-model-vertices} and~\eqref{eq:P-far-from-the-model-edges}, we have 
    \[
    \dist_G(V(N_x), V(N_{x'})) \geq \dist_G(V(M_x),V(P)) \geq 4\ell.
    \]
    
    If $x \in \{yh', zh'\}$ and $x' \in \{h'', h'h''\}$, then by~\eqref{eq:aPw-far-Myz},
    \[\dist_G(V(N_x), V(N_{x'})) \geq \dist_G(V(Q_y \cup Q_z), V(P)) \geq 4\ell.\]
    Finally, we consider the case $(x,x') = (h', h'')$.
    Let $d = \dist_G(V(N_x), V(N_{x'})) =  \dist_G(V(W_y \cup W_z), a)$. 
    We shall prove that $d \geq \ell$.
    Without loss of generality, assume that $d = \dist_G(a,V(W_y))$ and let $u$ be a vertex of $W_y$ such that $\dist_G(a,u) = d$, see~\cref{fig: augmenting-u-d}.
    Recall that we assumed that $\dist_G(a,w) \geq 2\ell$. Thus, in particular, $a \neq w$ and 
    since $P$ is an $a$-$B_G(V(M_{yz}),4\ell)$ path in $G$, 
    it follows that $\dist_G(a,V(M_{yz})) \geq 4\ell+1$.
    We obtain
        \[d + \length(q_yW_yu) \geq 4\ell+1 \ \ \text{ and } \ \ d + \length(uW_yw) \geq 2\ell.\]
    Adding these two inequalities, we obtain $2d + \length(W_y) \geq 6\ell + 1$. Since $\length(W_y) = 4\ell$, we have $d > \ell$, as desired.
    This concludes the proof that $\calN$ is an $\ell$-fat model of $H''$ in $G$ satisfying \ref{item:modelH''-other-branchsets}, \ref{item:modelH''-h'}, and~\ref{item:modelH''-h''}, and thus ends Case 2.

    \textcolor{red}{Case 3.} $\dist_G(V(Q_y), V(Q_z)) < \ell$.

    Let $S$ be a $V(Q_y)$-$V(Q_z)$ path in $G$ satisfying $\length(S) = \dist_G(V(Q_y), V(Q_z))<\ell$.
    For each $x \in \{y,z\}$, let $s_x$ be the endpoint of $S$ in $Q_x$, and let $v_x'$ be the vertex of $M_{yz}$ at distance in $G$ exactly $3\ell$ from $v_x$.
    Since $\calM$ is $4\ell$-clean, $v_y'$ and $v_z'$ exist and are uniquely defined. 
    Also,
    \begin{equation}
    \label{eq:cut-M-yz-far-from-branchsets}
    \dist_G(V(v_y'M_{yz}v_z'), V(M_y\cup M_z)) \geq 3\ell.
    \end{equation}
    Now we claim that 
    \begin{equation}\label{eq:S-far-from-model}
        \dist_G(V(S),V(M_y\cup M_z))>3\ell.
    \end{equation}
    To show \eqref{eq:S-far-from-model}, we only prove that $\dist_G(V(S),V(M_y))> 3\ell$, as the inequality $\dist_G(V(S),V(M_z))> 3\ell$ has a symmetric argument. 
    For this, we let $u$ be a vertex on $S$ such that $d_G(u,V(M_y))=d_G(V(S),M_y)$. We then have 
    \begin{align*}
    \dist_G(V(S),V(M_y)) 
    & = \dist_G(u,V(M_y))\\
    &\geq \dist_G(s_z,V(M_y)) - \length(S)\\
    &> \dist_G(V(Q_z),V(M_y)) - \ell\\
    &\geq 4\ell - \ell = 3\ell &&\textrm{by~\eqref{eq:Q-far-from-M}.}
    \end{align*} 
    This proves \eqref{eq:S-far-from-model}. 
    We now let
    \[Q = v_y'Q_y q_y \cup v_z' Q_z q_z \cup S.\]
    See \cref{fig: augmenting3} for an illustration.
    \begin{figure}[tp]
    \centering  
    \includegraphics{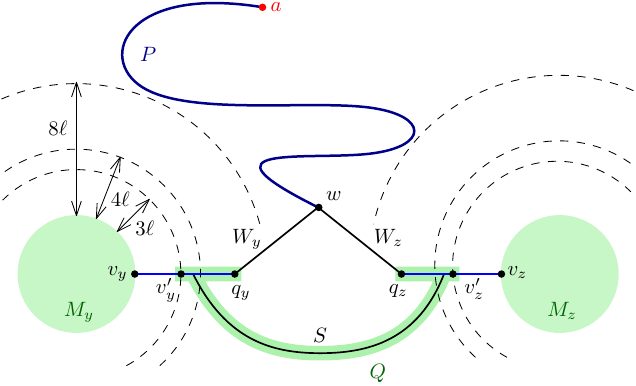}
    \caption{Setup for the application of the \nameref{lemma:tripod} in Case 3 of of~\Cref{lem:augmenting-the-model}.}  
    \label{fig: augmenting3}
    \end{figure}
    Note that \eqref{eq:S-far-from-model} implies that for each $x\in \{y,z\}$, the vertex $s_x$ belongs to $v'_xQ_xq_x$, thus $Q$ is connected.
    Furthermore, combining~\eqref{eq:cut-M-yz-far-from-branchsets} and~\eqref{eq:S-far-from-model} gives
    \begin{equation}\label{eq:far-from-Q}
        \dist_G(V(M_y \cup M_z), V(Q)) \geq 3\ell.
    \end{equation}
    Finally, note that since $\length(S) < \ell$ and by~\eqref{eq:aPw-far-Myz},
    \begin{equation}\label{eq:aPw-Q}
        \dist_G(V(P), V(Q)) \geq \dist_G(V(P), V(Q_y \cup Q_z)) - \length(S) > 4\ell-\ell = 3\ell.
    \end{equation}
    
    We plan to apply the \nameref{lemma:tripod} to $Q,v_y,v_z,w$ with $d = 4\ell$.
    To this end, we need to verify the assumptions \eqref{lem:tripod-asumpt-dist-xi-Q-large}, \eqref{lem:tripod-asumpt-dist-xi-Q-small}, and~\eqref{lem:tripod-asumpt-dist-xi-xj}.

    By~\eqref{eq:far-from-Q}, we have $\dist_G(\{v_y,v_z\},V(Q)) \geq 3\ell$, and by~\eqref{eq:aPw-Q}, we have $d_G(w, V(Q))>3\ell$.
    This proves assumption~\eqref{lem:tripod-asumpt-dist-xi-Q-large}.
    We have $\dist_G(v_y,v_y') = \dist_G(v_z,v_z') = 3\ell < 4\ell$ and $v_y', v_z' \in V(Q)$, hence $\dist_G(v_i,V(Q)) \leq 4\ell=d$ for each $i \in \{y,z\}$.
    Similarly, $\dist_G(w, q_y) = 4\ell$ and $q_y \in V(Q)$, hence $\dist_G(w, V(Q)) \leq 4\ell=d$.
    This proves assumption~\eqref{lem:tripod-asumpt-dist-xi-Q-small}.
    As $\calM$ is $8\ell$-fat, $v_y \in V(M_y)$, and $v_z \in V(M_z)$, we have $\dist_G(v_y,v_z) \geq 8 \ell=2d$.
    Since $w \in V(P)$, by~\eqref{eq:P-far-from-the-model-vertices}, $\dist_G(w, \{v_y,v_z\}) \geq 8\ell=2d$.
    This proves assumption~\eqref{lem:tripod-asumpt-dist-xi-xj}.

    Now we apply the \nameref{lemma:tripod} to $G$, $v_y,v_z,w$, $Q$, $\ell$, and $d = 4\ell$.
    We obtain connected subgraphs $Z$, $P_y$, $P_z$, $P_a$ in $G$ such that
    \begin{enumerate'}
    \item $v_i\in V(P_i)$ and $V(Z)\cap V(P_i)\neq\emptyset$ for each $i\in\{y,z,a\}$, 
    \label{lem:tripod-lemma:item:Pi-intersects-Z'}
    \item $Z \textrm{ has radius at most $\lfloor1.5\ell\rfloor$}$, 
    \label{lem:tripod-lemma:item:Z-has-bounded-radius'}
    \item $V(Z)\subseteq B_G(V(Q),2\ell-1)$,
    \label{lem:tripod-lemma:item:Z-is-close-to-Q'}
    \item $V(P_i)  \subseteq B_G(v_i,3\ell-1) \cup B_G(V(Q),\ell)$ for each $i\in\{y,z,a\}$,
    \label{lem:tripod-lemma:item:Pi-does-not-wander'}
    \item $\dist_G(V(P_i),V(P_j))\geq\ell$ for all distinct $i,j \in \{y,z,a\}$.
    \label{lem:tripod-lemma:item:Pi-Pj-are-far-apart'}
    \end{enumerate'}
    For every $x\in V(H'')\cup E(H'')$ let
    \[
    N_x = \begin{cases}
    M_x&\textrm{if $x \in V(H)\cup E(H)\setminus\set{yz}$,}\\
    Z&\textrm{if $x=h'$,}\\
    (\set{a},\emptyset)&\textrm{if $x=h''$,}\\
    P_y&\textrm{if $x=yh'$,}\\
    P_z&\textrm{if $x=zh'$,}\\
    P_a \cup P&\textrm{if $x=h'h''$.}
    \end{cases}
    \]
    See~\Cref{fig: augmenting3-model}.
    We state a few simple observations.
    Recall that $V(Q) \subset B_G(V(M_{yz}), \ell)$.
    Therefore, by~\ref{lem:tripod-lemma:item:Z-is-close-to-Q'},~\ref{lem:tripod-lemma:item:Pi-does-not-wander'}, and since $v_1,v_2\in V(M_{xy})$,
    \begin{equation*}
        V(Z) \cup V(P_1) \cup V(P_2) \subset B_G(V(M_{yz}), 3\ell-1).
    \end{equation*}
    It follows that
    \begin{equation}\label{eq:branch-sets-far-from-Myz}
        \textstyle\bigcup_{x \in \{h', yh', zh'\}} V(N_x) \subset B_G(V(M_{yz}), 3\ell-1).
    \end{equation}
    Additionally, by~\ref{lem:tripod-lemma:item:Pi-does-not-wander'},
    \begin{equation}\label{eq:branch-sets-far-from-Myz-and-P}
        V(N_{h'h''}) \subset B_G(V(P),3\ell-1) \cup B_G(V(Q), \ell).
    \end{equation}

    We claim that $\calN = (N_x \mid x\in V(H'')\cup E(H''))$ is a model of $H''$ in $G$, $\calN$ is $\ell$-fat, and $\calN$ satisfies \ref{item:modelH''-other-branchsets}, \ref{item:modelH''-h'}, and~\ref{item:modelH''-h''}. Items~\ref{item:modelH''-other-branchsets} and \ref{item:modelH''-h''} follow immediately from the definition of $\calN$, and 
    \ref{item:modelH''-h'} is an immediate consequence of the fact that $Z$ has radius at most $\lfloor1.5\ell\rfloor$.
    Therefore, all we need to argue in this case is that $\calN$ is a model of $H'$ and $\calN$ is $\ell$-fat.

    Let $x$ and $x'$ be distinct elements of $V(H'') \cup E(H'')$.
    We consider all possible choices of $x$ and $x'$ up to swapping them. 
    If $x$ and $x'$ are a vertex and an edge that are incident in $H''$, then we will show that $V(N_x) \cap V(N_{x'}) \neq \emptyset$.
    Otherwise, we will show that $\dist_G(V(N_x),V(N_{x'}))\geq\ell$.
    This will conclude the proof that $\calN$ is an $\ell$-fat model of $H''$ as $\ell > 0$.

    First, suppose that $x \in V(H'')$, $x' \in E(H'')$, and they are incident in $H''$.
    If $x\in V(H)$ and $x'\in E(H)$, then 
    $x$ and $x'$ are incident in $H$, 
    $N_x=M_x$, $N_{x'}=M_{x'}$, so since $\calM$ is a model of $H$, we have
    $V(N_x)\cap V(N_{x'}) = V(M_x)\cap V(M_{x'})\neq\emptyset$. 
    If $x\in \set{y,z}$ and $x'=xh'$, then 
    $N_x=M_x$ and $N_{x'}=P_x$ so $v_x\in V(N_x) \cap V(N_{x'})$ by~\ref{lem:tripod-lemma:item:Pi-intersects-Z'}. 
    If $x=h'$ and $x' \in \{yh',zh',h'h''\}$, then $V(N_x) \cap V(N_{x'}) \neq \emptyset$ by~\ref{lem:tripod-lemma:item:Pi-intersects-Z'}.
    If $(x,x') = (h'',h'h'')$, then since $a \in V(P)$, we have $V(N_x) \cap V(N_{x'}) \neq \emptyset$.
    From now on, assume that $x$ and $x'$ are not a vertex and an edge that are incident in $H'$, and we prove that $\dist_G(V(N_x),V(N_{x'}))\geq\ell$.

    \begin{figure}[tp]
    \centering  
    \includegraphics{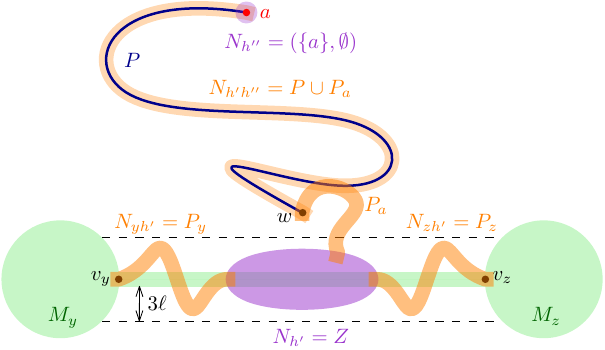}
    \caption{The model $\calN$ of $H''$ that we construct in Case 3 of the proof of~\Cref{lem:augmenting-the-model}.}  
    \label{fig: augmenting3-model}
    \end{figure}
    
    Suppose that $x \in V(H) \cup E(H) \setminus \{yz\}$.
    If $x' \in V(H) \cup E(H) \setminus \{yz\}$, then $\dist_G(V(N_x), V(N_{x'})) \geq 8\ell$ as $\calM$ is an $8\ell$-fat model.     

    If $x \notin \{y,z\}$ and $x' \in \{h', yh', zh'\}$, then 
    \begin{align*}
        \dist_G(V(N_x), V(N_{x'})) &\geq \dist_G(V(M_x), B_G(V(M_{yz}),3\ell-1)) && \text{by~\eqref{eq:branch-sets-far-from-Myz}}\\
        &\geq \dist_G(V(M_x), V(M_{yz})) - 3\ell+1    \\
        &\geq 8\ell - 3\ell = 5\ell && \text{as $\calM$ is $8\ell$-fat}.
    \end{align*}
    If $x \in \{y,z\}$ and $x' = h'$, then
    \begin{align*}
        \dist_G(V(N_x), V(N_{x'})) &=\dist_G(V(M_x), V(Z)) \\
        &\geq \dist_G(V(M_x),B_G(V(Q),2\ell-1)) && \text{ by~\ref{lem:tripod-lemma:item:Z-is-close-to-Q'}}\\
        &\geq \dist_G(V(M_x),V(Q)) - 2\ell\\
        &\geq 3\ell - 2\ell = \ell && \text{ by~\eqref{eq:far-from-Q}. }
    \end{align*}
    Next, consider the case of $x' = h'h''$. 
    We have,
    \begin{align*}
        \dist_G(V(N_x), V(N_{x'})) &= \dist_G(V(M_x), V(N_{h'h''})) \\
        &\geq \dist_G(V(M_x), B_G(V(P),3\ell-1)\cup B_G(V(Q),\ell)) && \text{by~\eqref{eq:branch-sets-far-from-Myz-and-P}}\\
        &\geq \min\{\dist_G(V(M_x), V(P)) - 3\ell, \dist_G(V(M_x), V(Q)) - \ell\}.
    \end{align*}
    By~\eqref{eq:P-far-from-the-model-vertices} and~\eqref{eq:P-far-from-the-model-edges},
    $\dist_G(V(M_x), V(P)) \geq 4\ell$.
    If $x \in \{y,z\}$, then 
    \[\dist_G(V(M_x), V(Q)) \geq \dist_G(V(M_x), V(M_{yz})) - \length(S) \geq  8\ell-\ell = 7\ell.\] 
    If $x \in \{y,z\}$, then by~\eqref{eq:far-from-Q},
    $\dist_G(V(M_x), V(Q)) \geq 3\ell$.
    Altogether, we obtain $\dist_G(V(N_x), V(N_{x'})) \geq 2\ell$.
    As $N_{h''}\subseteq N_{h'h''}$, note that when $x' = h''$, we also obtain $\dist_G(V(N_x), V(N_{x'})) \geq 2\ell$.
    
    Finally, consider the case where $(x,x') = (y,zh')$, and observe that the case where $(x,x') = (z,yh')$ is completely symmetric. In this case, we have
    \begin{align*}
        \dist_G(V(N_x), V(N_{x'})) &\geq \dist_G(V(M_y), V(P_z)) \\
        &\geq \dist_G(V(M_y), B_G(V(M_z),3\ell-1)\cup B_G(V(Q),\ell)) &&\text{by~\ref{lem:tripod-lemma:item:Pi-does-not-wander'}}\\
        &\geq \min \{\dist_G(V(M_y), V(M_z))-3\ell+1,\\
        &\hphantom{\geq \min \{}\dist_G(V(M_y), V(Q)) - \ell\}\\
        &\geq \min\{8\ell-3\ell+1, 3\ell - \ell\} = 2\ell 
    \end{align*}
    where the last inequality follows from the assumption that $\calM$ is $8\ell$-fat and by~\eqref{eq:far-from-Q}. This completes the proof in all cases where $x \in V(H) \cup E(H) \setminus \{yz\}$.

    If $(x,x') = (yh', zh')$, then $\dist_G(V(N_x), V(N_{x'})) = \dist_G(V(P_y), V(P_z)) \geq \ell$ by~\ref{lem:tripod-lemma:item:Pi-Pj-are-far-apart'}.
    Next, suppose that $x = h'h''$ and $x' \in \{yh',zh'\}$.
    Say that $x' = yh'$ (the argument for $x' = zh'$ is symmetric).
    We have,
    \begin{align*}
        \dist_G(V(N_x), V(N_{x'})) &=\dist_G(V(P_a), V(P_y))\\
        &\geq \min\{\dist_G(V(P_3), V(P_y)), \\
        &\hphantom{\geq \min \{}\dist_G(V(P),V(P_y))\}\\
        &\geq \min\{\ell, \dist_G(V(P),V(P_y))\} &&\text{by~\ref{lem:tripod-lemma:item:Pi-Pj-are-far-apart'}}\\
        &\geq \min \{\ell, \dist_G(V(P), V(M_y))-3\ell+1, \\
        &\hphantom{\geq \min \{\ell,}\dist_G(V(P), V(Q))-\ell\} &&\text{by~\ref{lem:tripod-lemma:item:Pi-does-not-wander'}}\\
        &\geq \min \{\ell, 4\ell-3\ell+1, 3\ell-\ell\} = \ell &&\text{by~\eqref{eq:P-far-from-the-model-vertices}, \eqref{eq:P-far-from-the-model-edges}, and \eqref{eq:aPw-Q}.} 
    \end{align*}
    Since, $V(N_{h''}) \subset V(N_{h'h''})$, we also obtain $\dist_G(V(N_x), V(N_{x'})) \geq \ell$ in the case where $x = h''$ and $x' \in \{yh',zh'\}$.
    Finally, suppose that $(x,x') = (h',h'')$.
    We have,
    \begin{align*}
        \dist_G(V(N_x), V(N_{x'})) &=\dist_G(V(Z), a)\\
        &\geq \dist_G(V(Z), V(P)) \\
        &\geq \dist_G(V(Q),V(P)) - 2\ell+1 &&\text{by~\ref{lem:tripod-lemma:item:Z-is-close-to-Q'}}\\
        &\geq 3\ell - 2\ell+1 = \ell+1 &&\text{by~\eqref{eq:aPw-Q}.}
    \end{align*}
    This concludes the proof that $\calN$ is an $\ell$-fat model of $H''$ in $G$ satisfying \ref{item:modelH''-other-branchsets}, \ref{item:modelH''-h'}, and~\ref{item:modelH''-h''}, and thus finishes Case 3.
    Since Cases 1, 2, and 3 are complementary, the proof is completed.
\end{proof}

\section{Observations on subcubic forests}
\label{sec:forests}

In this section, we prove several basic statements on subcubic forests. 
These will be used in the next section to find a collection of vertex-disjoint paths within a frame (i.e.\ within a subcubic forest), provided that it is large enough. 

\begin{lemma}\label{lem:bound-branch-vertices}
    Let $T$ be a subcubic tree with at least two vertices 
    and for each $\alpha \in[3]$, let $V_\alpha = \{v \in V(T) \mid \degree_T(v) = \alpha\}$.
    Then, $|V_3| \leq |V_1| - 2$.
\end{lemma}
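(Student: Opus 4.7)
The plan is to use the handshaking lemma together with the fact that a tree on $n$ vertices has exactly $n-1$ edges. Since $T$ is a tree with at least two vertices, it is connected and has no isolated vertex, so every vertex has degree at least $1$. Combined with the subcubic assumption, this means $V(T) = V_1 \cup V_2 \cup V_3$, and thus $n := |V(T)| = |V_1| + |V_2| + |V_3|$.

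I would then compute the sum of degrees in two ways. On one hand, the handshaking lemma gives
\[
\sum_{v \in V(T)} \deg_T(v) = 2|E(T)| = 2(n-1) = 2(|V_1| + |V_2| + |V_3|) - 2.
\]
On the other hand, partitioning vertices by degree,
\[
\sum_{v \in V(T)} \deg_T(v) = |V_1| + 2|V_2| + 3|V_3|.
\]
Equating the two expressions and cancelling the $|V_1| + 2|V_2| + 2|V_3|$ common to both sides yields $|V_3| = |V_1| - 2$, which is stronger than the required inequality.

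There is no serious obstacle here; the only small point to be careful about is justifying that $T$ has no vertex of degree $0$, which is immediate from connectedness and the assumption $|V(T)| \geq 2$. In particular, the calculation actually gives equality, and the lemma's inequality is stated in a conveniently weak form that suffices for its intended use in the following section.
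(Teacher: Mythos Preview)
Your proof is correct. In fact, as you note, the handshaking argument yields the exact equality $|V_3| = |V_1| - 2$, which is stronger than what the lemma states.

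The paper takes a different route: it argues by induction on $|V(T)|$, removing a leaf $v$ and distinguishing whether the unique neighbor $u$ of $v$ has degree $2$ or $3$ in $T$. In the first case both $|V_1|$ and $|V_3|$ are unchanged when passing to $T' = T - v$; in the second case both drop by exactly one. Either way the inequality is preserved from the induction hypothesis.

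Your double-counting argument is shorter and more direct, and it immediately gives the sharp equality rather than just the inequality. The paper's inductive proof, while slightly longer, has the mild advantage of being self-contained (not invoking the handshaking lemma explicitly) and of fitting the inductive flavour of the surrounding lemmas on subcubic forests. Either proof is entirely adequate here.
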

\begin{proof}
    The proof is by induction on $|V(T)|$. 
    If $T$ has two vertices, then $|V_3|=0$ and $|V_1|=2$, 
    so the lemma statement holds.
    Now suppose that $|V(T)|\geq3$ and let $v$ be a vertex of degree $1$ (a leaf) in $T$. 
    Let $u$ be the unique neighbor of $v$ in $T$. 
    Since $T$ is a subcubic tree and $T$ has more than two vertices, the degree of $u$ in $T$ is either $2$ or $3$. 
    Let $T'=T-v$ and let $V_\alpha'= \set{v \in V(T') \mid \degree_{T'}(v) = \alpha}$, for each $\alpha\in[3]$. 
    The induction hypothesis for $T'$ gives $|V_3'|\leq |V_1'|-2$.
    If the degree of $u$ in $T$ is $2$, then $|V_1|=|V_1'|$ and $|V_3|=|V_3'|$, so we get
    $|V_3| = |V_3'| \leq |V_1'|-2 = |V_1|-2$.
    If the degree of $u$ is $3$, then $|V_1|=|V_1'|+1$ and $|V_3|=|V_3'|+1$, so we get 
    $|V_3| = |V_3'|+1 \leq |V_1'|+1-2 = |V_1|-2$.
    This completes the proof.
\end{proof}

\begin{corollary}\label{cor:bound-branch-vertices}
    Let $F$ be a subcubic forest where each of $m$ components has at least two vertices
    and for each $\alpha \in[3]$, let $V_\alpha = \{v \in V(F) \mid \degree_F(v) = \alpha\}$.
    Then, $|V_3| \leq |V_1| - 2m$.\end{corollary}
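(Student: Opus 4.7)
The plan is to reduce directly to \cref{lem:bound-branch-vertices} by summing the inequality over the components of $F$. First I would decompose $F$ into its connected components $T_1,\ldots,T_m$. By hypothesis, each $T_i$ is a subcubic tree with at least two vertices, so \cref{lem:bound-branch-vertices} applies and yields
\[
|V_3 \cap V(T_i)| \leq |V_1 \cap V(T_i)| - 2
\]
for each $i \in [m]$.

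The observation that makes the summation immediate is that the $T_i$'s are pairwise vertex-disjoint and their union is $V(F)$, and moreover the degree of any vertex in $F$ equals its degree in the component containing it (since no edges of $F$ run between components). Therefore, for each $\alpha \in [3]$, the sets $V_\alpha \cap V(T_i)$ partition $V_\alpha$, so $|V_\alpha| = \sum_{i=1}^{m} |V_\alpha \cap V(T_i)|$. Summing the $m$ inequalities above gives $|V_3| \leq |V_1| - 2m$, as desired. I do not foresee any real obstacle; the only point to be careful about is that the assumption \emph{every} component has at least two vertices is exactly what is needed to invoke \cref{lem:bound-branch-vertices} on each $T_i$ and thus collect a full $-2$ contribution from each of the $m$ components.
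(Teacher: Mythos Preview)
Your proposal is correct and matches the paper's intended argument: the corollary is stated without proof immediately after \cref{lem:bound-branch-vertices}, so the implicit derivation is exactly the component-wise application and summation you describe.
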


\begin{lemma}
\label{lemma:extracting-paths-from-the-frame}
Let $T$ be a subcubic tree and 
let $Z\subseteq \set{v\in V(T)\mid \degree_T(v)\leq 2}$. 
Then $T$ contains $\lfloor\frac{|Z|}{2}\rfloor$ pairwise vertex-disjoint $Z$-paths.
\end{lemma}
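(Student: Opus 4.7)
The plan is strong induction on $|V(T)|$, with the trivial base case $|Z|\leq 1$. For the inductive step ($|Z|\geq 2$), the goal is to exhibit a single $Z$-path $P$ with $V(P)\cap Z$ consisting of exactly its two endpoints $\{u,v\}$, and such that $Z\setminus\{u,v\}$ lies in a single connected component of $T-V(P)$. Given such $P$, induction applied to that component (a smaller subcubic tree in which the remaining $Z$-vertices still have degree at most~$2$) yields $\lfloor(|Z|-2)/2\rfloor=\lfloor|Z|/2\rfloor-1$ further vertex-disjoint $Z$-paths, for a total of $\lfloor|Z|/2\rfloor$.

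To construct $P$, I work with the minimal subtree $T'$ of $T$ containing $Z$. By minimality every leaf of $T'$ belongs to $Z$, and every subtree at any branch vertex of $T'$ contains a $Z$-vertex. If $T'$ is a path, enumerate its $Z$-vertices $z_1,\dots,z_k$ in order and take $P$ to be the $z_1$-to-$z_2$ subpath: then $Z\setminus V(P)=\{z_3,\dots,z_k\}$ lies on the far side of $z_2$, in one component. Otherwise $T'$ has a branch vertex, and I locate a branch vertex $w$ of $T'$ at least two of whose three subtrees in $T'$ are pendant paths with no further branch vertex. Such $w$ exists: contract $T'$ onto its branch vertices and take any leaf of the resulting tree. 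Let $S_1,S_2$ be two such pendant subtrees at $w$, with endpoint leaves $l_1,l_2\in Z$. If some $S_i$ contains a $Z$-vertex besides $l_i$, pick such a vertex $z$ closest to $l_i$ in $S_i$ and set $P$ to be the $l_i$-to-$z$ subpath of $S_i$; otherwise set $P$ to be the $l_1$-to-$l_2$ path through $w$. In each case $V(P)\cap Z$ has exactly two elements by construction.

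The step I expect to require the most care is verifying that $Z\setminus V(P)$ lies in one component of $T-V(P)$, and this is why the choice of $w$ must be restricted in the way above rather than, e.g., using a globally closest pair of $Z$-vertices or a shortest $Z$-path. In the first sub-case above, $w$ is not removed, so the third subtree $S_3$ at $w$, the unused portion of $S_i$, and the other pendant subtree are all joined through $w$, and together with their $T$-appendages they form one component containing the remaining $Z$-vertices. In the second sub-case $w$ is removed, but because $S_1,S_2$ carry no further $Z$-vertex and $Z\subseteq V(T')$, every element of $Z\setminus\{l_1,l_2\}$ sits inside $S_3$, which remains connected. A subdivided claw with an extra $Z$-vertex placed on one arm illustrates why naive choices can split the leftover $Z$-vertices across different components and miss the bound, whereas the rule above recovers it by pairing the two $Z$-vertices living on the same pendant arm.
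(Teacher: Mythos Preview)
Your argument is correct. The induction is sound, the construction of $P$ in each sub-case yields a $Z$-path meeting $Z$ only in its endpoints, and your verification that the remaining $Z$-vertices lie in a single component of $T-V(P)$ goes through (the key point being that $Z\subseteq V(T')$, that $T'-V(P)$ is connected because the path $P$ leaves $T'$ through at most one edge, and that degrees in the surviving component only drop).

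The paper's proof follows the same peel-off-one-path-and-recurse scheme but organises the peeling differently. Rather than passing to the Steiner tree $T'$ and locating a peripheral branch vertex, the paper first normalises $T$ itself: it deletes any leaf not in $Z$, suppresses any degree-$2$ vertex not in $Z$ (contracting it and later reinserting it into any path that uses the new edge), and thus reduces to the situation where every vertex of degree at most $2$ lies in $Z$. In that reduced tree it then removes either an edge joining a leaf to a vertex of degree at most $2$, or a deepest degree-$3$ vertex together with its two leaf neighbours, and recurses. Your approach avoids the contraction step and the associated path-translation, at the cost of the Steiner-tree analysis and the component check you flagged; the paper's approach keeps each step entirely local (at most three vertices removed) but needs the suppression trick. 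Both reach the bound with the same $\lfloor |Z|/2\rfloor - 1$ recursion.
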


\begin{proof}
The proof is by induction on $|V(T)|$. 
If $T$ contains at most one vertex, then there is nothing to prove. 
If $T$ contains a vertex $v$ of degree $1$ (a leaf) which is not in $Z$, 
then we call induction for $T-v$ and $Z$. 
Thus, from now on, we assume that $T$ has at least two vertices and all the leaves of $T$ are in $Z$.

Next, assume that $T$ contains a vertex $v$ of degree $2$ with $v \notin Z$.
Consider a tree $T'$, which is obtained from $T-v$ by adding an edge $uw$ between the neighbors $u$ and $w$ of $v$ in $T$.
By induction applied to $T'$ and $Z$, we obtain $\lfloor\frac{|Z|}{2}\rfloor$ pairwise vertex-disjoint $Z$-paths in $T'$.
If one of the paths contains $uw$, then we replace it by $uvw$ and obtain $\lfloor\frac{|Z|}{2}\rfloor$ pairwise vertex-disjoint $Z$-paths in $T$.
Thus, from now on, we may assume that all vertices of degree at most $2$ in $T$ are also in $Z$.

Assume that there exist two vertices $u$ and $v$ of $T$ such that $uv\in E(T)$, $u$ is a leaf of $T$, and $\deg_T(v) \leq 2$. 
Thus, $u,v\in Z$. 
Then $T'=T-\{u,v\}$ is a subcubic tree, and induction hypothesis implies that it contains a collection of $\lfloor \tfrac{|Z\cap V(T')|}{2}\rfloor=\lfloor \tfrac{|V(Z)|}{2}\rfloor-1$ pairwise vertex-disjoint $Z$-paths. 
Adding the $Z$-path $uv$ to the collection we obtain the lemma statement.

Finally, we assume that the previous case does not hold.
It follows that $T$ contains a vertex of degree $3$.
Let $r$ be an arbitrary vertex of $T$ and let $u$ be a vertex of degree $3$ in $T$ that maximises $\dist_T(r,u)$. 
Thus, $u$ has two neighbors in $T$ not contained in the $r$-$u$ path in  $T$, say $v$ and $w$. 
Since we are not in the previous case and by choice of $u$, 
it follows that $v$ and $w$ are leafs in $T$.
Let $T' = T - \{u,v,w\}$ and $Z' = Z \setminus \{v,w\}$. Note that $T'$ is a subcubic forest. 
By induction applied to $T'$ and $Z'$, we obtain $\lfloor\frac{|Z|-2}{2}\rfloor=\lfloor\frac{|Z|}{2}\rfloor-1$ pairwise vertex-disjoint $Z$-paths in $T$.
By adding the $Z$-path $vuw$ to the collection, we conclude the proof.
\end{proof}

\begin{corollary}
\label{cor:extracting-paths-from-the-frame}
Let $F$ be a subcubic forest and let $m$ be the number of components of $F$.
Let $Z\subseteq \set{v\in V(F)\mid \degree_F(v)\leq 2}$. 
Then $F$ contains $\left\lceil\frac{|Z|-m}{2}\right\rceil$ pairwise vertex-disjoint $Z$-paths.
\end{corollary}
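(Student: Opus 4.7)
The plan is to reduce the statement directly to \cref{lemma:extracting-paths-from-the-frame} applied componentwise. Let $T_1,\ldots,T_m$ denote the components of $F$, and for each $i\in[m]$ set $Z_i = Z \cap V(T_i)$. Since each $T_i$ is a subcubic tree and $Z_i$ consists of vertices of degree at most $2$ in $T_i$ (which a fortiori remains true after restricting to the component), \cref{lemma:extracting-paths-from-the-frame} yields a collection $\calP_i$ of $\lfloor |Z_i|/2 \rfloor$ pairwise vertex-disjoint $Z_i$-paths inside $T_i$. As the $T_i$ are vertex-disjoint, the union $\calP = \bigcup_{i=1}^m \calP_i$ is a collection of pairwise vertex-disjoint $Z$-paths in $F$.

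It then remains to verify that $|\calP| \geq \left\lceil\tfrac{|Z|-m}{2}\right\rceil$. For each $i\in[m]$, one has $\lfloor |Z_i|/2 \rfloor \geq (|Z_i|-1)/2$, hence
\[
|\calP| \;=\; \sum_{i=1}^m \left\lfloor \tfrac{|Z_i|}{2} \right\rfloor \;\geq\; \sum_{i=1}^m \tfrac{|Z_i|-1}{2} \;=\; \tfrac{|Z|-m}{2}.
\]
Since the left-hand side is an integer, this inequality immediately upgrades to $|\calP| \geq \lceil (|Z|-m)/2 \rceil$, which is exactly the desired bound.

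There is no real obstacle here: the only thing to be mildly careful about is that a vertex in $Z$ of degree at most $2$ in $F$ retains degree at most $2$ in its component (which is trivially true since its $F$-neighbours all lie in the same component), so the hypothesis of \cref{lemma:extracting-paths-from-the-frame} is satisfied for each $T_i$.
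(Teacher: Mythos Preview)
Your proof is correct and follows essentially the same approach as the paper: both apply \cref{lemma:extracting-paths-from-the-frame} componentwise and then perform the same floor/ceiling manipulation on $\sum_i \lfloor |Z_i|/2 \rfloor$ to reach $\lceil (|Z|-m)/2 \rceil$. The only cosmetic difference is that the paper rewrites $\lfloor |Z_i|/2 \rfloor = \lceil (|Z_i|-1)/2 \rceil$ and uses superadditivity of the ceiling, whereas you bound below by $(|Z_i|-1)/2$ and then round up using integrality; these are the same computation.
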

\begin{proof}
    Let $T_1,\dots,T_m$ be the components of $F$.
    Applying~\Cref{lemma:extracting-paths-from-the-frame} to each of the components, we obtain a collection of pairwise disjoint $Z$-paths of size at least
        \[\sum_{i=1}^m \left\lfloor\frac{|Z \cap V(T_i)|}{2}\right\rfloor = \sum_{i=1}^m \left\lceil\frac{|Z \cap V(T_i)| - 1}{2}\right\rceil \geq \left\lceil\sum_{i=1}^m \frac{|Z \cap V(T_i)| - 1}{2}\right\rceil = \left\lceil\frac{|Z|-m}{2}\right\rceil. \qedhere\]
\end{proof}

\section{Wrapping up}\label{sec:wrapping}

In this section, we complete the proofs of~\cref{theorem:main,theorem:main2}.
First, we formally define frames which we will work with.
\Cref{lem:larger-frame-or-hitting} below encapsulates the induction step of the final proof: given a frame, we either find a larger frame or a hitting set.
In~\Cref{lem:large-frame-implies-packing}, we prove that a large enough frame contains a required packing.

Let $i$ be a nonnegative integer, let $\ell$ and $r$ be positive integers, let $G$ be a graph, and let $A$ be a subset of the vertices of $G$.
A pair $(F,\calM)$ is an \defin{$(i,\ell,r,A)$-frame} in $G$ if $F$ is a subcubic forest and $\calM = (M_x \mid x \in V(F) \cup E(F))$ is a model of $F$ in $G$ satisfying the following conditions.
Let $m$ be the number of components of $F$ containing at least two vertices, and for each $\alpha \in \{0,1,2,3\}$, let
\begin{align*}
    V_\alpha &= \{v \in V(F) \mid \deg_F(v) = \alpha\}.
\end{align*}
Then we have
\begin{enumerate}[label=\textup{(f\arabic*)}, noitemsep]
    \item $i = |V_0| + |V_1| + |V_2| - m$, \label{frame:i}
    \item $\calM$ is $\ell$-fat, \label{frame:ell}
    \item $M_x$ has radius at most $r$ for every $x \in V(F)$, \label{frame:radius}
    \item $A \cap V(M_x) \neq \emptyset$ for every $x \in V_1 \cup V_2$, \label{frame:Z}
    \item $M_x$ is an $A$-path for every $x \in V_0$. \label{frame:Y}
\end{enumerate}

Additionally, an $(i,\ell,r,A)$-frame $(F,\calM)$ is \defin{coarse} if all vertices of $F$ have positive degree, i.e.\ $V_0 = \{v \in V(F) \mid \deg_F(v) = 0\} = \emptyset$. 

\begin{lemma}\label{lem:larger-frame-or-hitting}
    Let $i$ be a nonnegative integer, $\ell$ and $r$ be positive integers with $r \geq 4\ell$, let $G$ be a graph, and let $A$ be a subset of the vertices of $G$.
    Let $(F, \calM')$ be an $(i,16\ell,r,A)$-frame in $G$. 
    Then either
    \begin{enumerate}
        \item $G$ has an $(i+1,\ell,r,A)$-frame, or \label{normal:new-frame}
        \item there exists $X \subset V(G)$ with $|X| \leq 2i$ such that every $A$-path in $G$ contains a vertex in $B_G(X, r + 8\ell)$. \label{normal:hitting}
    \end{enumerate}
    Additionally, if $(F,\calM')$ is coarse, then either
    \begin{enumerate}[label=\textup{(\roman*-c)}, noitemsep]
        \item $G$ has a coarse $(i+1,\ell,r,A)$-frame in $G$, or \label{corase:new-frame}
        \item there exists $X \subset V(G)$ with $|X| \leq 2i$ such that every $\ell$-coarse $A$-path in $G$ contains a vertex in $B_G(X, r + 8\ell)$. \label{coarse:hitting}
    \end{enumerate}
\end{lemma}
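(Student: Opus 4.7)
The plan is in three stages: clean the model, build a candidate hitting set $X$ from centers of branch sets, and extend the frame whenever $X$ fails, via \cref{lem:augmenting-the-model} or by adding a fresh component.

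Apply \cref{lemma:fat-to-clean} with $q = 8\ell$ and cleanness parameter $4\ell$ (valid since $16\ell = 8\ell + 2\cdot 4\ell$) to obtain from $\calM'$ a model $\calM$ that is $8\ell$-fat, $4\ell$-clean, and shares its branch sets with $\calM'$. For each $v \in V(F)$ pick a center $x_v \in V(M_v)$ with $V(M_v) \subseteq B_G(x_v, r)$, and set $X = \{x_v : v \in V(F)\}$. Applying \cref{cor:bound-branch-vertices} to $F$ minus its $|V_0|$ isolated vertices yields $|V_3| \leq |V_1| - 2m$, so
\[|X| \leq |V_0| + 2|V_1| + |V_2| - 2m \leq 2i,\]
and the radius bound gives $B_G(V(M_v), 8\ell) \subseteq B_G(X, r+8\ell)$ for every $v \in V(F)$. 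If every $A$-path (resp.\ every $\ell$-coarse $A$-path) meets $B_G(X, r+8\ell)$, we land in outcome~\ref{normal:hitting} (resp.~\ref{coarse:hitting}) and are done. Otherwise fix an unhit such $A$-path $Q = v_0 v_1 \cdots v_n$; by the above inclusion, $\dist_G(V(Q), V(M_v)) > 8\ell$ for every $v \in V(F)$.

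\emph{Approaching case.} If some $v_i$ lies in $B_G(V(M_{yz}), 4\ell)$ for an edge $yz \in E(F)$, take $i^*$ minimal and a corresponding $yz$, and set $P = v_0 v_1 \cdots v_{i^*}$. By the choice of $i^*$, the internal vertices of $P$ avoid $B_G(V(M_e), 4\ell)$ for all $e \in E(F)$, and the $8\ell$-fatness of $\calM$ forces $\dist_G(v_{i^*}, V(M_e)) \geq 8\ell - 4\ell = 4\ell$ for $e \neq yz$. These are precisely the hypotheses of \cref{lem:augmenting-the-model} applied to $(v_0, yz, P)$, and the lemma returns an $\ell$-fat model of $H'$ or $H''$. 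In the $H'$ outcome a new degree-$2$ vertex joins $V_2$ with $v_0 \in A$ in its branch set; in the $H''$ outcome a new degree-$3$ vertex joins $V_3$ together with a degree-$1$ singleton containing $v_0$ joining $V_1$. Either way, $i$ grows by exactly $1$ and no isolated vertex is introduced, so the coarse property is preserved.

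\emph{Far case.} $V(Q)$ lies at distance more than $4\ell$ from every branch path. When $\dist_G(v_0, v_n) \geq \ell$ (automatic in the coarse setting), we extend $F$ by a fresh edge $u_0 u_n$ with singleton branch sets $\{v_0\}, \{v_n\}$ (each of radius $1$ and containing an $A$-vertex) and branch path $M_{u_0 u_n} = Q$: the distance conditions provide $\ell$-fatness, $|V_1|$ grows by $2$, $m$ by $1$, and therefore $i$ grows by $1$. The remaining sub-case, in the normal setting with $\dist_G(v_0, v_n) < \ell$, is handled by picking $Q$ shortest among unhit $A$-paths: when this shortest $Q$ has length below $\ell \leq 2r$ its radius is at most $r$, so $Q$ can be added as a fresh isolated $V_0$-vertex; otherwise the hitness of the direct $v_0$-$v_n$ shortest path lets one launch the augmenting machinery from an intermediate vertex of that path. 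The main obstacle is the distance verification in the Approaching case, which is exactly what the $8\ell$-fatness of $\calM$ (inherited from $\calM'$ via cleaning) enables; a secondary technicality is the close-endpoint sub-case in the normal Far case, completed as above.
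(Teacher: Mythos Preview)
Your sketch follows the paper's three-stage plan (clean via \cref{lemma:fat-to-clean}, build $X$ from centers and bound $|X|$ via \cref{cor:bound-branch-vertices}, then extend), and your Approaching case and the far-endpoints sub-case of the Far case are handled correctly and in the same way as the paper.

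The genuine gap is in the close-endpoint sub-case of the normal Far case. Your plan there --- take $Q$ shortest among unhit $A$-paths, and when $\length(Q)\geq\ell$ exploit that the direct $v_0$--$v_n$ geodesic $R$ is hit to ``launch the augmenting machinery'' --- does not close. Every vertex of $R$ lies within $\ell$ of $v_0\in V(Q)$, and since $Q$ is in the Far case, each such vertex is at distance $>4\ell-\ell=3\ell$ from every branch path and $>8\ell-\ell=7\ell$ from every branch set. Hence there is no $M_e$ within $4\ell$ of $R$ to which \cref{lem:augmenting-the-model} could attach. The fact that $R$ meets $B_G(X,r+8\ell)$ is not a contradiction and gives no foothold: since $r\geq 4\ell$, the ball $B_G(c_x,r+8\ell)$ is much larger than $B_G(V(M_x),8\ell)$, so a vertex of $R$ can lie in the former while still being at distance $>7\ell$ from every $V(M_x)$.

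The paper's resolution is simpler and avoids the shortest-unhit manoeuvre entirely: given \emph{any} unhit $A$-path with endpoints $a,a'$ at distance $<\ell$, take a shortest $a$--$a'$ path $P'$ in $G$ and add $P'$ as the branch set of a fresh isolated vertex. Then $\length(P')<\ell\leq r$ handles \ref{frame:radius} and \ref{frame:Y}, and since $V(P')\subseteq B_G(a,\ell-1)$ with $a\in V(Q)$, one gets $\dist_G(V(P'),V(M_x))>3\ell$ for every $x\in V(F)\cup E(F)$, which is the required $\ell$-fatness.
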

\begin{proof}
    Let $\calM' = (M_x' \mid x \in V(F) \cup E(F))$, let $V_\alpha = \{v \in V(F) \mid \deg_F(v) = \alpha\}$ for each $\alpha \in \{0,1,2,3\}$, and let $m$ denote the number of components of $F$ with at least two vertices.
    First, we apply~\Cref{lemma:fat-to-clean} to $G$ and $\calM'$, and obtain a model 
    $\calM = (M_x \mid x \in V(F) \cup E(F))$ of $F$ in $G$, which is $8\ell$-fat, $4\ell$-clean, and such that for every $x \in V(F)$, we have $M_x = M_x'$.
    In particular, $(F,\calM)$ is an $(i,8\ell,r,A)$-frame in $G$ and if moreover $(F,\calM')$ is coarse, then so is $(F,\calM)$.

    For each $x \in V(F)$, let $c_x$ be a vertex of $G$ such that $V(M_x) \subset B_G(c_x,r)$ (such a vertex exists by~\ref{frame:radius}). 
    We define $X = \{c_x \mid x \in V(F)\}$.
    In particular, $\bigcup_{x \in V(F)} V(M_x) \subset  B_G(X, r)$ and $B_G(\bigcup_{x \in V(F)} V(M_x), 8\ell) \subset B_G(X, r+8\ell)$.
    Additionally,
    \begin{align*}
        |X| = |V(F)| &= |V_0| + |V_1| + |V_2| + |V_3|\\
        &\leq |V_0| + |V_1| + |V_2| + (|V_1| - 2m) && \text{by~\Cref{cor:bound-branch-vertices}}\\
        &\leq 2(|V_0| + |V_1| + |V_2| - m)\\
        &= 2i &&\text{by~\ref{frame:i}.}
    \end{align*}
    Therefore, if every $A$-path in $G$ contains a vertex in $B_G(X, r + 8\ell)$, then~\ref{normal:hitting} holds, and if every $\ell$-coarse $A$-path in $G$ contains a vertex in $B_G(X, r + 8\ell)$, then~\ref{coarse:hitting} holds.
    Thus, from now on, we assume that there is an $A$-path $P$ that is disjoint from $B_G(X, r + 8\ell)$, and so 
    \begin{equation}\label{eq:P-far-from-the-vertices-model}
        \dist_G\left(V(P),\bigcup_{x \in V(F)} V(M_x)\right) \geq 8\ell.
    \end{equation}
    Moreover, we choose $P$ to be $\ell$-coarse if possible.
    Let $a$ and $a'$ be the endpoints of $P$. 
    Note that $a,a'\in A$. 
    We split the reasoning depending on 
    the distance from $P$ to the branch paths, and the distance between $a$ and $a'$. 
    
    \textcolor{red}{Case 1.} $\dist_G(V(P), \bigcup_{x \in E(F)}V(M_x)) \leq 4\ell$.

    Let $w$ be the first vertex in $P$ starting from $a$ with $\dist_G(w,\bigcup_{x \in E(F)}V(M_x)) \leq 4\ell$ and let $P' = aPw$.
    Then there exists $yz \in E(F)$ such that $\dist_G(w, V(M_{yz})) \leq 4\ell$, and since $\calM$ is $8\ell$-fat (by~\ref{frame:ell}),
    \begin{equation}\label{eq:P-far-from-edges}
        \dist_G\left(V(P'), \bigcup_{x\in E(H) \setminus\{yz\}} V(M_x)\right) \geq 4\ell.
    \end{equation}
    
    We apply~\Cref{lem:augmenting-the-model} to $\ell$, $G$, $H = F$, $\calM$, $a$, $yz$, and $P'$ (note that~\eqref{eq:P-far-from-the-model-vertices} holds by~\eqref{eq:P-far-from-the-vertices-model} and~\eqref{eq:P-far-from-the-model-edges} holds by~\eqref{eq:P-far-from-edges}).
    Let $H'$, $H''$, $h'$, and $h''$ be as in the assertion of~\Cref{lem:augmenting-the-model}.
    This assertion has two cases.
    
    First, assume that we obtained an $\ell$-fat model $\calN = (N_x \mid x \in V(H') \cup E(H'))$ of $H'$ in $G$ satisfying~\ref{item:modelH'-other-branchsets} and~\ref{item:modelH'-h'}.
    We claim that $(H', \calN)$ is an $(i+1,\ell,r,A)$-frame in $G$.
    By construction, $H'$ is a subcubic forest and $\calN$ is a model of $H'$ in $G$.
    For each $\alpha \in \{0,1,2,3\}$ let $V_\alpha' = \{v \in V(H') \mid \deg_{H'}(v) = \alpha\}$ and let $m'$ be the number of components of $H'$ with at least two vertices.
    
    We have $V_0' = V_0$, $V_1' = V_1$, $V_2' = V_2 \cup \{h'\}$, and $m' = m$, hence by~\ref{frame:i} for $(F,\calM)$, we have $i+1 = |V_0'| + |V_1'| + |V_2'| - m'$, and so~\ref{frame:i} holds for $(H',\calN)$.
    As $\calN$ is $\ell$-fat,~\ref{frame:ell} holds for $(H',\calN)$.
    Item~\ref{item:modelH'-other-branchsets} implies that $M_x = N_x$ for every $x \in V(F)$ and \ref{item:modelH'-h'} implies that $A \cap V(N_{h'})) \neq \emptyset$ (as $a \in A$) and that $N_{h'}$ has radius at most $4\ell$.
    By assumption, $4\ell \leq r$. We also have $V_0' = V_0$.
    It follows that by \ref{frame:radius}, \ref{frame:Z}, and \ref{frame:Y} for $(F,\calM)$, items \ref{frame:radius}, \ref{frame:Z}, and \ref{frame:Y} hold for $(H',\calN)$.
    Additionally, if $(F,\calM)$ is coarse (i.e.\ $V_0 = \emptyset$), then $(H',\calN)$ is coarse (i.e.\ $V_0' = \emptyset$).

    Second, assume that we obtained an $\ell$-fat model $\calN = (N_x \mid x \in V(H') \cup E(H''))$ of $H''$ in $G$ satisfying~\ref{item:modelH''-other-branchsets}, \ref{item:modelH''-h'}, and~\ref{item:modelH''-h''}.
    We claim that $(H'',\calN)$ is an $(i+1,\ell,r,A)$-frame in $G$.
    By construction, $H''$ is a subcubic forest and $\calN$ is a model of $H''$ in $G$.
    For each $\alpha \in \{0,1,2,3\}$ let $V_\alpha'' = \{v \in V(H'') \mid \deg_{H''}(v) = \alpha\}$ and let $m''$ be the number of components of $H''$ with at least two vertices.

    We have $V_0'' = V_0$, $V_1'' = V_1 \cup \{h''\}$, $V_2'' = V_2$, and $m'' = m$, hence by~\ref{frame:i} for $(F,\calM)$, we have $i+1 = |V_0''| + |V_1''| + |V_2''| - m''$, and so~\ref{frame:i} holds for $(H'',\calN)$.
    As $\calN$ is $\ell$-fat,~\ref{frame:ell} holds for $(H'',\calN)$.
    Item~\ref{item:modelH''-other-branchsets} implies that $M_x = N_x$ for every $x \in V(F)$, \ref{item:modelH''-h'} states that $N_{h'}$ has radius at most $4\ell$, and \ref{item:modelH''-h''} implies that $A \cap V(N_{h''})) \neq \emptyset$ (as $a \in A$) and that the radius of $N_{h''}$ is $0$. By assumption, $4\ell \leq r$. We also have $V_0'' = V_0$.
    It follows that by \ref{frame:radius}, \ref{frame:Z}, and \ref{frame:Y} for $(F,\calM)$, items \ref{frame:radius}, \ref{frame:Z}, and \ref{frame:Y} hold for $(H'',\calN)$.
    Additionally, if $(F,\calM)$ is coarse (i.e.\ $V_0 = \emptyset$), then $(H'',\calN)$ is coarse (i.e.\ $V_0'' = \emptyset$).

    We proved that $(H',\calN)$ or $(H'',\calN)$ (depending on the outcome of the application of~\cref{lem:augmenting-the-model}) is an $(i+1,\ell,r,A)$-frame in $G$, hence~\ref{normal:new-frame} holds.
    Moreover, if $(F,\calM)$ is coarse, then $(H',\calN)$ or $(H'',\calN)$ is also coarse, hence~\ref{corase:new-frame} holds.

    \textcolor{red}{Case 2.} $\dist_G(V(P), \bigcup_{x \in E(F)}V(M_x)) \geq 4\ell$ and $\dist_G(a,a') \geq \ell$.

    Let $F'$ be obtained from $F$ by adding two new vertices $h$ and $h'$ adjacent only to each other in $F'$.
    For every $x\in V(F')\cup E(F')$ let
    \[
    N_x = \begin{cases}
    M_x&\textrm{if $x \in V(F)\cup E(F)$,}\\
    (\set{a},\emptyset)&\textrm{if $x=h$,}\\
    (\set{a'},\emptyset)&\textrm{if $x=h'$,}\\
    P&\textrm{if $x=hh'$.}
    \end{cases}
    \]  
    We verify that $\calN = (N_x \mid x\in V(F')\cup E(F'))$ is a model of $F'$ in $G$ and $\calN$ is $\ell$-fat.
    Let $x$ and $x'$ be distinct elements of $V(F') \cup E(F')$.
    We consider all possible choices of $x$ and $x'$ up to swapping them. 
    By construction of $\mathcal{N}$ and since $\calM$ is a model of $F$, if $x$ and $x'$ are a vertex and an edge that are incident in $F'$, then $V(N_x) \cap V(N_{x'}) \neq \emptyset$.
    Thus, assume otherwise.
    We will show that $\dist_G(V(N_x),V(N_{x'}))\geq\ell$.
    This will conclude the proof that $\calN$ is an $\ell$-fat model of $F'$ as $\ell > 0$.
    If $x,x' \in V(F) \cup E(F)$, then $\dist_G(V(N_x),V(N_{x'})) = \dist_G(V(M_x),V(M_{x'})) \geq 8\ell$ as $\calM$ is $8\ell$-fat (by~\ref{frame:ell} for $(F,\calM)$).
    If $x \in V(F) \cup E(F)$ and $x' \in \{h,hh',h'\}$, then $V(N_x) \subset V(P)$, and so by~\eqref{eq:P-far-from-the-vertices-model} and 
    the Case 2 assumption, we have $\dist_G(V(N_x),V(N_{x'})) \geq \dist_G(V(M_x),V(P)) \geq 4\ell$.
    If $x,x' \in \{h,hh',h'\}$, then $\{x,x'\} = \{h,h'\}$ and $\dist_G(V(N_x), V(N_{x'})) = \dist_G(a,a') \geq \ell$ by the Case 2 assumption.
    This concludes the proof that $\calN$ is an $\ell$-fat model of $F'$ in $G$.
    
    We claim that $(F',\calN)$ is an $(i+1, \ell, r, A)$-frame in $G$.
    We have already proved that $\calN$ is $\ell$-fat, and so~\ref{frame:ell} holds for $(F',\calN)$.
    
    For each $\alpha \in \{0,1,2,3\}$ let $V_\alpha' = \{v \in V(F') \mid \deg_{F'}(v) = \alpha\}$ and let $m'$ be the number of components of $F'$ with at least two vertices.
    We have $|V_0'| = |V_0|$, $|V_1'| = |V_1| + 2$, $|V_2'| = |V_2|$, and $m' = m + 1$, hence by~\ref{frame:i} for $(F,\calM)$, we have $i+1 = |V_0'| + |V_1'| + |V_2'| - m'$, and so~\ref{frame:i} holds for $(F',\calN)$.
    
    The radius of $N_{h}$ and $N_{h'}$ is equal to $0$ and both sets $V(N_{h})$ and $V(N_{h'})$ contain a vertex in $A$.
    Therefore, by \ref{frame:radius} and \ref{frame:Z} for $(F,\calM)$, we obtain \ref{frame:radius} and \ref{frame:Z} for $(F',\calN)$.
    Finally, \ref{frame:Y} for $(F',\calN)$ follows from \ref{frame:Y} for $(F,\calM)$ and as $V_0 = V_0'$.
    Additionally, if $(F,\calM)$ is coarse, then $(F',\calN)$ is also coarse.

    This shows that $(F',\calN)$ is an $(i+1, \ell, r, A)$-frame in $G$ as claimed, and so~\ref{normal:new-frame} holds.
    Moreover, if $(F,\calM)$ is coarse, then $(F',\calN)$ is also coarse, hence~\ref{corase:new-frame} holds.

    \textcolor{red}{Case 3.} $\dist_G(V(P), \bigcup_{x \in E(F)}V(M_x)) \geq 4\ell$ and $\dist_G(a,a') < \ell$.

    Note that in this case, $P$ is not $\ell$-coarse. 
    Thus, by our construction rule, there is no $\ell$-coarse $A$-path disjoint from $B_G(X,r+8\ell)$ in $G$. It follows that \ref{coarse:hitting} holds.

    We now show that~\ref{normal:new-frame} holds.
    Let $F'$ be obtained from $F$ by adding a new isolated vertex $h$.
    Let $P'$ be a shortest $a$-$a'$ path in $G$.
    For every $x\in V(F')\cup E(F')$, let
    \[
    N_x = \begin{cases}
    M_x&\textrm{if $x \in V(F)\cup E(F)$,}\\
    P'&\textrm{if $x=h$,}
    \end{cases}
    \]
    We verify that $\calN = (N_x \mid x\in V(F')\cup E(F'))$ is a model of $F'$ in $G$ and $\calN$ is $\ell$-fat.
    Let $x$ and $x'$ be distinct elements of $V(F') \cup E(F')$.
    We consider all possible choices of $x$ and $x'$ up to swapping them. 
    By construction of $\mathcal{N}$ and since $\calM$ is a model of $F$, if $x$ and $x'$ are a vertex and an edge that are incident in $F'$, then $V(N_x) \cap V(N_{x'}) \neq \emptyset$.
    Thus, assume otherwise.
    We will show that $\dist_G(V(N_x),V(N_{x'}))\geq\ell$.
    This will conclude the proof that $\calN$ is an $\ell$-fat model of $F'$ as $\ell > 0$.
    If $x,x' \in V(F) \cup E(F)$, then $\dist_G(V(N_x),V(N_{x'})) = \dist_G(V(M_x),V(M_{x'})) \geq 8\ell$ as $\calM$ is $8\ell$-fat.
    If $x \in V(F) \cup E(F)$ and $x' = h$, then,
    \begin{align*}
        \dist_G(V(N_x),V(N_{x'})) &= \dist_G(V(M_x),V(P'))\\
        &\geq \dist_G(V(M_x), V(P)) - \length(P') \\
        &> 4\ell - \ell = 3\ell && \text{by~\eqref{eq:P-far-from-the-vertices-model} and the Case 3 assumption.}
    \end{align*}
    This concludes the proof that $\calN$ is an $\ell$-fat model of $F'$ in $G$.
    
    We claim that $(F',\calN)$ is an $(i+1, \ell, r, A)$-frame in $G$.
    We have already proved that $\calN$ is $\ell$-fat, and so~\ref{frame:ell} holds for $(F',\calN)$.

    For each $\alpha \in \{0,1,2,3\}$, let $V_\alpha' = \{v \in V(F') \mid \deg_{F'}(v) = \alpha\}$ and let $m'$ denote the number of components of $F'$ with at least two vertices.
    We have $|V_0'| = |V_0|+1$, $|V_1'| = |V_1|$, $|V_2'| = |V_2|$, and $m' = m$, hence by~\ref{frame:i} for $(F,\calM)$, we have $i+1 = |V_0'| + |V_1'| + |V_2'| - m'$, and so~\ref{frame:i} holds for $(F',\calN)$.
    
    The radius of $N_h$ is less than $\ell$, $N_h$ contains (at least two) vertices of $A$, $N_h$ is an $A$-path, and $V_0' = V_0 \cup \{h\}$.
    Therefore, by \ref{frame:radius}, \ref{frame:Z}, and~\ref{frame:Y} for $(F,\calM)$, we obtain \ref{frame:radius}, \ref{frame:Z}, and~\ref{frame:Y} for $(F',\calN)$.

    This shows that $(F',\calN)$ is an $(i+1, \ell, r, A)$-frame in $G$ as claimed and so~\ref{normal:new-frame} holds.

    Since Cases 1, 2, and 3 are complementary, the proof is complete.
\end{proof}

We will use the following straightforward observation.

\begin{obs}\label{obs:packing}
    Let $\ell$ be a positive integer, let $G$ and $H$ be graphs, and let $A$ be a subset of the vertices of $G$.
    Let $\calM$ be an $\ell$-fat model of $H$ in $G$.
    Let $Q_1, \dots, Q_j$ be pairwise disjoint subgraphs of $H$ and for each $\beta \in [j]$, let $P_\beta$ be a subgraph of 
    $\bigcup_{x \in V(Q_\beta)\cup E(Q_\beta)} M_x$.
    Then, $P_1,\dots,P_j$ is a collection of subgraphs of $G$, which are pairwise at distance at least $\ell$ in $G$.
\end{obs}

\begin{lemma} \label{lem:large-frame-implies-packing}
    Let $i$, $\ell$, and $r$ be positive integers, let $G$ be a graph, and let $A$ be a subset of the vertices of $G$.
    Let $(F,\calM)$ be a $(2i-1,\ell,r,A)$-frame in $G$.
    Then there is a family $\mathcal{P}$ of $A$-paths in $G$ with 
    $|\mathcal{P}|=i$ such that the paths in $\calP$ are pairwise at distance at least $\ell$ in $G$.
    Additionally, if $(F,\calM)$ is coarse, then there is a family $\mathcal{P}$ of $\ell$-coarse $A$-paths in $G$ with 
    $|\mathcal{P}|=i$ such that the paths in $\calP$ are pairwise at distance at least $\ell$ in $G$.
\end{lemma}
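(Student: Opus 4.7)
The plan is to construct the $i$ required $A$-paths from two sources: the isolated vertices of $F$ (whose branch sets are already $A$-paths by~\ref{frame:Y}) and a vertex-disjoint packing of $Z$-paths extracted from the non-trivial components of $F$, where $Z := V_1 \cup V_2$.

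Let $V_\alpha := \{v \in V(F) : \deg_F(v) = \alpha\}$ for $\alpha \in \{0,1,2,3\}$ and let $m$ be the number of components of $F$ with at least two vertices, so that by~\ref{frame:i} we have $|V_0|+|V_1|+|V_2|-m = 2i-1$. For every $w \in V_0$, the subgraph $M_w$ is already an $A$-path. Let $F'$ be the forest obtained from $F$ by deleting $V_0$: it has exactly $m$ components, each of size at least two. Applying~\cref{cor:extracting-paths-from-the-frame} to $F'$ and $Z$ yields $j = \lceil(|V_1|+|V_2|-m)/2\rceil = \lceil(2i-1-|V_0|)/2\rceil$ pairwise vertex-disjoint $Z$-paths $Q_1,\dots,Q_j$ in $F'$. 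For each such $Q_\beta$ with endpoints $u,v \in Z$, fix $a_u \in A \cap V(M_u)$ and $a_v \in A \cap V(M_v)$ (available by~\ref{frame:Z}) and choose $P_\beta$ to be any $a_u$-$a_v$ path inside the connected subgraph $\bigcup_{x \in V(Q_\beta) \cup E(Q_\beta)} M_x$; by construction $P_\beta$ is an $A$-path of $G$.

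A short parity-based check using the identity $|V_0|+|V_1|+|V_2|-m = 2i-1$ shows that $|V_0| + j \geq i$ always, so $\calP := \{M_w : w \in V_0\} \cup \{P_\beta : \beta \in [j]\}$ contains at least $i$ $A$-paths (discard extras). The pairwise distance bound follows in one stroke from~\cref{obs:packing}: take the family of subgraphs $\{(\{w\},\emptyset) : w \in V_0\} \cup \{Q_\beta : \beta \in [j]\}$ of $F$, which are pairwise vertex-disjoint, and observe that each element of $\calP$ lives inside the corresponding union of branch sets and branch paths, so pairwise distances are at least $\ell$.

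For the coarse addendum, $V_0 = \emptyset$ by assumption, so every path in $\calP$ is a $P_\beta$ with endpoints $a_u \in V(M_u)$ and $a_v \in V(M_v)$ for two \emph{distinct} vertices $u,v$ of $F$. Since $u \neq v$, the $\ell$-fatness of $\calM$ gives $\dist_G(a_u,a_v) \geq \dist_G(V(M_u),V(M_v)) \geq \ell$, so $P_\beta$ is $\ell$-coarse, as required. No step here is a real obstacle: the combinatorial content has been encapsulated in~\cref{cor:extracting-paths-from-the-frame}, and the distance bookkeeping in~\cref{obs:packing}, so the argument is mostly a matter of assembly and the short counting check.
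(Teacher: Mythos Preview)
Your proof is correct and follows essentially the same approach as the paper's: split off the isolated vertices of $F$ (whose branch sets are $A$-paths by~\ref{frame:Y}), apply \cref{cor:extracting-paths-from-the-frame} to $F-V_0$ with $Z=V_1\cup V_2$, lift each $Z$-path to an $A$-path inside the corresponding union of branch sets and branch paths, verify the count $|V_0|+j\geq i$, and invoke \cref{obs:packing} for the distance bound and $\ell$-fatness for the coarse addendum. The only cosmetic difference is that the paper bounds $|V_0|+j$ via $|V_0|+\lceil(|V_1|+|V_2|-m)/2\rceil\geq\lceil(2i-1)/2\rceil$ directly rather than a parity split, and it notes explicitly that $a_u\neq a_v$ because distinct branch sets are disjoint.
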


\begin{proof}
    Let $\calM = (M_x \mid x \in V(F) \cup E(F))$.
    Let $V_\alpha = \{v \in V(F) \mid \deg_F(v) = \alpha\}$ for each $\alpha \in \{0,1,2,3\}$, and let $m$ be the number of components of $F$ with at least two vertices.
    By~\ref{frame:Y}, for every $x \in V_0$, $M_x$ is an $A$-path in $G$.
    Let $F' = F - V_0$, $Z = V_1 \cup V_2$, and $j = \left\lceil\frac{|Z| - m}{2}\right\rceil$.
    By~\Cref{cor:extracting-paths-from-the-frame}, $F'$ contains $j$ pairwise vertex-disjoint $Z$-paths $Q_1,\dots,Q_j$.
    
    For each $\beta \in [j]$, we now define an $A$-path $P_\beta$ in $G$.
    Consider the graph $G_\beta = \bigcup_{x \in V(Q_\beta)\cup E(Q_\beta)} M_x$.
    Since $Q_\beta$ is a path in $F$ and $\calM$ is a model of $F$ in $G$, $G_\beta$ is connected.
    Since $Q_\beta$ is a $Z$-path, it contains at least two vertices of $Z$.
    In particular, by~\ref{frame:Z}, there are two distinct vertices $x$ and $y$ of $V(Q_\beta)$ such that both $M_{x}$ and $M_{y}$ contain a vertex of $A$, 
    say $a_x$ and $a_y$, respectively.
    In particular, $a_x \neq a_y$ as $\calM$ is a model. Let $P_\beta$ be an $a_x$-$a_y$ path in $G_\beta$. Thus, $P_\beta$ is an $A$-path in $G$.
    Since $\calM$ is $\ell$-fat (by~\ref{frame:ell}), we have
    \[\dist_G(a_x,a_y) \geq \dist_G(V(M_x), V(M_y)) \geq \ell.\]
    It follows that $P_\beta$ is $\ell$-coarse.

    Let $\calP = \{M_x \mid x \in V_0\} \cup \{P_\beta \mid \beta \in [j]\}$.
    We have 
    \begin{align*}
        |\calP| = |V_0| + j &= |V_0| + \left\lceil\frac{|V_1| + |V_2| - m}{2}\right\rceil\\
        &\geq \left\lceil\frac{|V_0| + |V_1| + |V_2| - m}{2}\right\rceil = \left\lceil\frac{2i-1}{2}\right\rceil = i &&\text{by~\ref{frame:i}.}
    \end{align*}
    As argued before, $\calP$ is a collection of $A$-paths in $G$.
    Since $\calM$ is $\ell$-fat (by~\ref{frame:ell}), by~\Cref{obs:packing}, paths in $\calP$ are pairwise at distance at least $\ell$ in $G$.
    Finally, if $(F,\calM)$ is coarse, then $V_0 = \emptyset$, and so all $A$-paths in $\calP$ are $\ell$-coarse. 
    This completes the proof.
\end{proof}

We have now everything in hand to wrap up the proofs of~\Cref{theorem:main,theorem:main2}.
Note that these two proofs are almost identical, up to using different parts of~\Cref{lem:larger-frame-or-hitting,lem:large-frame-implies-packing}.
For brevity, we give only one proof, adding \q{(coarse)} in several places.
To get the proof for~\Cref{theorem:main}, one should ignore this addition, and to get the proof of~\Cref{theorem:main2}, one should not.

\begin{proof}[Proof of~\Cref{theorem:main,theorem:main2}]
    For all positive integers $k$ and $d$, we set 
    \[f(k) = 4k-4 \text{ and } g(k,d) = 256^{k}d.\]
    We let $k,d$ be positive integers, $G$ be a graph, and $A$ a subset of the vertices of $G$.
    We moreover let $r = 4 \cdot 16^{2k-2}d$.
    We prove that either
    \begin{enumerate}
        \item $G$ contains a (coarse) $(2k-1, d, r, A)$-frame, or \label{item:i-wrapper}
        \item there exists a set $X$ of the vertices of $G$ with $|X| \leq f(k)$ such that every ($g(k,d)$-coarse) $A$-path in $G$ contains a vertex in $B_G(X, g(k,d))$. \label{item:ii-wrapper}
    \end{enumerate}
    Observe that this will immediately conclude the proof of \cref{theorem:main,theorem:main2}, since by~\Cref{lem:large-frame-implies-packing}, if $G$ contains a (coarse) $(2k-1, d, r, A)$-frame, then it also contains $k$ ($d$-coarse) $A$-paths pairwise at distance at least $d$ in $G$.
    
    Assume that~\ref{item:ii-wrapper} does not hold.
    We prove by induction that for every integer $i$ with $0 \leq i \leq 2k-1$, $G$ contains a (coarse) $(i, 16^{2k-1-i}d,r,A)$-frame.
    In particular, the case $i = 2k-1$ corresponds to~\ref{item:i-wrapper}.

    For the base case where $i = 0$, we choose $F$ to be the null graph and $\calM$ to be the empty collection, and we note that $(F,\calM)$ is a (coarse) $(0,16^{2k-1}d,r,A)$-frame in $G$.
    Next, we let $i$ be an integer with $0 \leq i \leq 2k-2$ and assume by induction hypothesis that $G$ contains a (coarse) $(i,16^{2k-1-i}d,r,A)$-frame.
    Let $\ell = 16^{2k-1-i}d \slash 16 = 16^{2k-1-(i+1)}d$.
    Note that $4\ell = 4 \cdot 16^{2k-2-i}d \leq 4\cdot 16^{2k-2}d = r$.
    Thus, we may apply~\Cref{lem:larger-frame-or-hitting}, obtaining that either $G$ contains a (coarse) $(i+1,16^{2k-1-(i+1)}d,r,A)$-frame and the inductive step is completed, or there exists $X \subset V(G)$ with $|X| \leq 2i$ such that every ($\ell$-coarse) $A$-path in $G$ contains a vertex in $B_G(X, r+8\ell)$.
    Note that $2i \leq 2(2k-2) = f(k)$, $r+8\ell =  4\cdot 16^{2k-2}d + 8 \cdot 16^{2k-2-i}d \leq 12 \cdot 16^{2k-2}d < g(k,d)$, and $\ell < g(k,d)$.
    Therefore, the latter possibility in the outcome of~\Cref{lem:larger-frame-or-hitting} would contradict our assumption that~\ref{item:ii-wrapper} does not hold, concluding the induction step.
\end{proof}

\section{Coarse equivalence between minors and topological minors}
\label{sec:topological-minors}

In this section, we prove~\Cref{lemma:showcase-of-the-tripod-lemma}, whose statement we repeat for convenience.

\topologicalminors*

\begin{proof}
Let $\mathcal{M} = (M_x \mid x \in V(H)\cup E(H))$ be a $7\ell$-fat model of $H$ in $G$. 
Let $uv$ be an edge in $H$.
Note that $M_{uv}$ is a connected graph containing a vertex of $B_G(V(M_u),2\ell)$ and a vertex of $B_G(V(M_v),2\ell)$. 
Let $W_{uv}$ be a $B_G(V(M_u),2\ell)$-$B_G(V(M_v),2\ell)$ path contained in $M_{uv}$. 
Let $x_{uv,u}$ and $x_{uv,v}$ denote respectively the endpoints of $W_{uv}$ in $B_G(V(M_u),2\ell)$ and 
in $B_G(V(M_v),2\ell)$. Note that we assumed $\mathcal M$ to be $7\ell$-fat, so $B_G(V(M_u),2\ell)$ and $B_G(V(M_v),2\ell)$ are disjoint, thus $x_{uv,u}$ and $x_{uv,v}$ are at distance in $G$ exactly $2\ell$ from, respectively, $M_u$ and $M_v$. 

For each vertex $u$ of $G$, we will define a connected subgraph $N_u$ of $G$, and for each edge $e$ incident to $u$ in $H$, we will define a connected subgraph $P_{e,u}$ of $G$ such that denoting $e_{1},\ldots,e_\delta$ (where $\delta =\deg_H(u)\leq 3$) the edges incident to $u$ in $H$, we have
\begin{enumerate'}
\item\label{it:top-minor-incidence} $x_{e_i,u}\in V(P_{e_i,u})$ and $V(N_u)\cap V(P_{e_i,u})\neq\emptyset$ for each $i\in[\delta]$, 
\item\label{it:top-minor-radius} $N_u\ \textrm{has radius at most $\lfloor1.5\ell\rfloor$}$,
\item\label{it:top-minor-branchsets} $V(N_u)\subseteq B_G(V(M_u),2\ell)$,
\item\label{it:top-minor-branchpaths} $V(P_{e_i,u})  \subseteq B_G(x_{e_i,u},\ell-1) \cup B_G(V(M_u),\ell)$ for each $i\in[\delta]$,
\item\label{it:top-minor-fat} $\dist_G(V(P_{e_i,u}),V(P_{e_j,u}))\geq\ell$ for all distinct $i,j\in[\delta]$.
\end{enumerate'}
In particular,~\ref{it:top-minor-branchpaths} implies 
\begin{enumerate'}
    \setcounter{enumi}{5}
    \item\label{it:top-minor-branchpaths+} $V(P_{e_i,u})  \subseteq B_G(V(M_u),3\ell)$ for each $i\in [\delta]$.
\end{enumerate'}

Let $u$ be a vertex of $H$.
The definition of $N_{u}$ and of the subgraphs $P_{e,u}$ for each $e \in E(H)$ incident to $u$ in $H$ depends on the degree of $u$. 
Assume first that $u$ has degree $3$ in $H$ and let $e_1$, $e_2$, $e_3$ denote the three edges incident to $u$ in $H$. 
We apply the \nameref{lemma:tripod} choosing 
$\set{v_i}_{i\in[3]}=\set{x_{e_i,u}}_{i\in[3]}$, $Q=M_u$ and $d=2\ell$. 
Note that assumptions~\eqref{lem:tripod-asumpt-dist-xi-Q-large} and~\eqref{lem:tripod-asumpt-dist-xi-Q-small} hold as $\dist_G(x_{e_i,u},V(M_u))=2\ell$ 
for each $i\in[3]$, and assumption~\eqref{lem:tripod-asumpt-dist-xi-xj} holds as for all distinct $i,j \in [3]$,
$$\dist_G(x_{e_i,u},x_{e_j,u}) \geq \dist_G(V(M_{e_i}), V(M_{e_j})) \geq 7\ell\geq 2d.$$
We therefore obtain a connected subgraph $N_{u}=Z$ of $G$, together with three connected subgraphs $\set{P_{e_i,u}}_{i\in[3]}$ of $G$ that satisfy conditions \ref{it:top-minor-incidence} to \ref{it:top-minor-branchpaths+}.

Next, assume that $u$ has degree $2$ in $H$ and let $e_1$, $e_2$ denote the two edges incident to $u$ in $H$. 

For each $i \in [2]$, let $Q_i$ be a shortest $x_{e_i,u}$-$V(M_u)$ path in $G$, and let $y_i$ be the endpoint of $Q_i$ in $V(M_u)$. 
We define  
\[N_u = Q_1, \ \ P_{e_1,u} = (x_{e_1,u}, \emptyset), \ \text{ and} \ \ P_{e_2,u} = M_u \cup Q_2.\]
As $x_{e_2,u} \in V(P_{e_2,u})$, $y_1 \in V(N_u) \cap V(P_{e_2,u})$ and $x_{e_1,u} \in V(P_{e_1,u}) \cap V(N_u)$,~\ref{it:top-minor-incidence} holds for $u$.
As $N_u=Q_1$ is a shortest path of length $2\ell$, it has radius $\ell\leq \lfloor1.5\ell\rfloor$ in $G$, showing~\ref{it:top-minor-radius}. 
Condition~\ref{it:top-minor-branchsets} follows from the fact that $Q_1$ is a shortest $x_{e_1,u}$-$M_u$ path, and that $x_{e_1,u}$ is at distance exactly $2\ell$ from $M_u$, hence $N_u=Q_1$ must be included in $B_G(M_u,2\ell)$. Since $Q_2$ has length $2\ell$ and $x_{e_2,u},y_2$ are its endpoints, we have $V(Q_2)\subseteq B_G(x_{e_2,u},\ell-1)\cup B_G(y_2,\ell)\subseteq B_G(x_{e_2,u},\ell-1)\cup B_G(V(M_u),\ell)$, and thus $u$ satisfies \ref{it:top-minor-branchpaths} and \ref{it:top-minor-branchpaths+}.
Finally, \ref{it:top-minor-fat} also holds, because
\begin{align*}
    \dist_G(V(P_{e_i,u}),V(P_{e_j,u})) &= \dist_G(x_{e_1,u},V(M_u)\cup V(Q_2))\\
    &\geq \min\{\dist_G(x_{e_1,u},V(M_u)), \dist_G(x_{e_1,u},V(Q_2))\}\\
    &\geq \min\{2\ell, \dist_G(x_{e_1,u},x_{e_2,u}) - 2\ell \} && \textrm{as } \length(Q_2) = 2\ell\\
    &\geq \min\{2\ell, 7\ell - 2\ell\} \ge \ell && \textrm{as $\calM$ is $7\ell$-fat}.
\end{align*}

Finally, assume that $u$ has degree $1$ in $H$ and let $e$ be the edge incident to $u$ in $H$.
Let $P_{e,u}$ be a shortest $x_{e,u}$-$V(M_u)$ path in $G$ and denote $u'$ the endpoint of $P_{e,u}$ in $V(M_u)$. Note that $P_{e,u}$ has length $2\ell$.  
We define $N_u$ as the one vertex graph containing $u'$. Conditions \ref{it:top-minor-incidence} to~\ref{it:top-minor-branchpaths+} immediately follow.

This completes the construction of subgraphs $N_u$ for each $u \in V(G)$ and $P_{e,u}$ for each $e\in E(H)$ incident to $u$ satisfying conditions \ref{it:top-minor-incidence} to~\ref{it:top-minor-branchpaths+}.
For each edge $e=uv$ in $H$, we define
\[
N_{e} = P_{e,u} \cup W_{e} \cup P_{e,v},
\]
and we set $\mathcal N=(N_x \mid x\in V(H)\cup E(H))$.

For each $u \in V(H)$, $N_u$ is a connected subgraph of $G$, and by~\ref{it:top-minor-incidence}, for each $e \in E(H)$, $N_e$ is a connected subgraph of $G$.
Moreover, for each vertex $u\in V(H)$ and each edge $e\in E(H)$ incident to $u$ in $H$, $x_{e,u}$ witnesses that $N_u\cap N_e\neq \emptyset$ (by~\ref{it:top-minor-incidence} again).
Let $x$ and $y$ be distinct elements of $V(H) \cup E(H)$ so that $\set{x,y}$ is not equal to $\set{v,e}$ where $v \in V(H)$, $e \in E(H)$, and $v$ is incident to $e$ in $H$. 
We consider all possible choices of $x$ and $y$ up to swapping them.
In each case, we prove that $\dist_G(V(N_x),V(N_{y}))\geq\ell$.
This will show that $\calN$ is a model of $H$ in $G$ (as $\ell > 0$) and that $\calN$ is an $\ell$-fat model.

If $x,y\in V(H)$, then
\begin{align*}
\dist_G(V(N_x),V(N_y)) 
&\geq \dist_G(B_G(V(M_x),2\ell),B_G(V(M_y),2\ell))&&\textrm{by~\ref{it:top-minor-branchsets}} \\
&\geq \dist_G(V(M_x),V(M_y))-2\cdot 2\ell\\
&\geq 7\ell-4\ell \geq \ell&&\textrm{as $\mathcal{M}$ is $7\ell$-fat}.
\end{align*}

If $\{x,y\} = \{u,vw\}$ where $u$, $v$ and $w$ are distinct vertices of $H$ and $vw \in E(H)$, then 
\begin{align*}
\dist_G(V(N_u),V(N_{vw})) 
&\geq \min\{\dist_G(B_G(V(M_u),2\ell),V(P_{vw,v})),&&\textrm{by~\ref{it:top-minor-branchsets} and}\\
& \hphantom{\geq \min \{} \dist_G(B_G(V(M_u),2\ell), V(W_{vw})), &&\textrm{definition of $N_{vw}$}\\
& \hphantom{\geq \min \{} \dist_G(B_G(V(M_u),2\ell), V(P_{vw,w}))\}.
\end{align*}
We have $V(W_{vw}) \subseteq V(M_{vw})$, so $\dist_G(B_G(V(M_u),2\ell), V(W_{vw})) \ge 7\ell-2\ell \ge \ell $ as $\calM$ is $7\ell$-fat. We have 
\begin{align*}
\dist_G(B_G(V(M_u),2\ell),V(P_{vw,v}))&\geq \dist_G(B_G(V(M_u),2\ell),B_G(V(M_v),3\ell))&&\textrm{by~\ref{it:top-minor-branchpaths+}}\\
& \geq 7\ell-5\ell \ge \ell &&\textrm{as $\calM$ is $7\ell$-fat.}
\end{align*}
Symmetrically, we also have $\dist_G(B_G(V(M_u),2\ell),V(P_{vw,w}))\ge \ell$, so $\dist_G(V(N_u),V(N_{vw}))\ge \ell$.

If $\{x,y\} = \{e_1,e_2\}$ where $e_1$ and $e_2$ are distinct non-incident edges of $H$, say $e_1 = u_1v_1$ and $e_2 = u_2v_2$, then for each $i\in [2]$, $V(N_{e_i}) \subseteq B_G(V(M_{u_i}),3\ell) \cup V(M_{e_i}) \cup B_G(V(M_{v_i}),3\ell)$ by~\ref{it:top-minor-branchpaths+}. So
\begin{align*}
\dist_G(V(N_{e_1}),V(N_{e_2})) 
&\geq \dist_G(V(M_{u_1}) \cup V(M_{e_1}) \cup V(M_{v_1}), \\
&\hphantom{\geq \dist_G(}V(M_{u_2}) \cup V(M_{e_2}) \cup V(M_{v_2}))-2\cdot3\ell\\
&\geq 7\ell -6\ell \ge \ell &&\textrm{as $\calM$ is $7\ell$-fat.}
\end{align*}

If $\{x,y\} = \{uv,uw\}$ where $u$, $v$ and $w$ are distinct vertices of $H$, and $uv$ and $vw$ are edges of $H$, then recall that $N_{uv} = P_{uv,u} \cup (W_{uv} \cup P_{uv,v})$, and likewise for $N_{uw}$, so we have
\begin{align*}
\dist_G(V(N_{uv}),V(N_{uw})) 
&\geq \min\{\dist_G(V(P_{uv,u}),V(P_{uw,u})),\\
& \hphantom{\geq \min\{}\dist_G(V(P_{uv,u}),V(W_{uw} \cup P_{uw,w})), \\
& \hphantom{\geq \min\{}\dist_G(V(W_{uv}\cup P_{uv,v}),V(P_{uw,u})), \\
& \hphantom{\geq \min\{}\dist_G(V(W_{uv} \cup P_{uv,v}),V(W_{uw} \cup P_{uw,w}))\} \\
&\geq \min\{\ell, &&\textrm{by~\ref{it:top-minor-fat}}\\
& \hphantom{\geq \min\{}\dist_G(V(P_{uv,u}),V(W_{uw}) \cup B_G(V(M_w),3\ell)), && \textrm{by~\ref{it:top-minor-branchpaths+}} \\
& \hphantom{\geq \min\{}\dist_G(V(W_{uv}) \cup B_G(V(M_v),3\ell),V(P_{uw,u})),  && \textrm{by~\ref{it:top-minor-branchpaths+}}\\
& \hphantom{\geq \min\{}\dist_G(V(W_{uv}) \cup B_G(V(M_v),3\ell),&& \textrm{by~\ref{it:top-minor-branchpaths+}}\\
& \hphantom{\geq \min\{\dist_G}V(W_{uw}) \cup B_G(V(M_w),3\ell))\} && \textrm{by~\ref{it:top-minor-branchpaths+}.}\\
\end{align*}
Since $W_{uv} \subset M_{uv}$ and $W_{uw} \subset M_{uw}$ and $\calM$ is $7\ell$-fat, we have
\[\dist_G(V(W_{uv}) \cup B_G(V(M_v),3\ell),V(W_{uw}) \cup B_G(V(M_w),3\ell))\ge 7\ell-2\cdot3\ell\ge \ell.\]
Next, we have
 \begin{align*}
 \dist_G(V(P_{uv,u}),V(W_{uw}) &\cup B_G(V(M_w),3\ell))\\
 &\ge \min\{\dist_G(V(P_{uv,u}),V(W_{uw})),\\ 
 & \hphantom{\geq \min\{}\dist_G(V(P_{uv,v}), V(M_w))-3\ell\} \\
  &\ge \min\{\dist_G(B(V(M_u),3\ell),V(M_{uw}))), && \textrm{by~\ref{it:top-minor-branchpaths+}} \\ 
 & \hphantom{\geq \min\{}\dist_G(B_G(V(M_v),3\ell), V(M_w))-3\ell\} && \textrm{by~\ref{it:top-minor-branchpaths+}} \\
  &\ge \min\{\dist_G(V(M_u),V(M_{uw})) - 3\ell, \\
 & \hphantom{\geq \min\{}\dist_G(V(M_v), V(M_w))-6\ell\} \\
 &\geq \min \{7\ell-3\ell, 7\ell-6\ell\} = \ell && \textrm{as $\calM$ is $7\ell$-fat.}
 \end{align*}
Symmetrically, $\dist_G(V(W_{uv}) \cup B_G(M_v,3\ell),V(P_{uw,u})) \geq \ell$.
Thus, $\dist_G(V(N_{uv}),V(N_{uw}))  \geq \ell$.

This concludes the proof that $\calN$ is an $\ell$-fat model of $H$, and so,~\ref{it:thm-top-minor-fat} holds.
By~\ref{it:top-minor-radius},~\ref{it:thm-top-minor-radius} holds, and altogether, this completes the proof.
\end{proof}

\bibliographystyle{abbrvnat}
\bibliography{bibliography}

@article{Gallai64,
  title={Maximum-{M}inimum {S}{\"a}tze und verallgemeinerte {F}aktoren von {G}raphen},
  author={Gallai, Tibor},
  journal={Acta Mathematica Hungarica},
  volume={12},
  number={1-2},
  pages={131--173},
  year={1964},
note = {\href{https://doi.org/10.1007/BF02066678}{doi:10.1007/BF02066678}}
}

@book{Gromov,
 author = {Gromov, Mikhael},
 title = {Geometric group theory. {Volume} 2: {Asymptotic} invariants of infinite groups. {Proceedings} of the symposium held at the {Sussex} {University}, {Brighton}, {July} 14-19, 1991},
 fseries = {London Mathematical Society Lecture Note Series},
 series = {London Mathematical Society Lecture Note Series},
 issn = {0076-0552},
 volume = {182},
 isbn = {0-521-44680-5},
 year = {1993},
 publisher = {Cambridge: Cambridge University Press},
 language = {English},
 keywords = {20F65,20F05,20-02,57M07},
 zbMATH = {437296},
 Zbl = {0841.20039}
}

@article{EP65,
  author  = {Erdős, Paul and Pósa, Lajos},
  title   = {On independent circuits contained in a graph},
  journal = {Canadian Journal of Mathematics},
  volume  = {17},
  pages   = {347--352},
  year    = {1965},
  note    = {\href{https://doi.org/10.4153/CJM-1965-035-8}{doi:10.4153/CJM-1965-035-8}}
}

@article{GP23,
title={Graph minors and metric spaces},
author={Georgakopoulos, Agelos and Papasoglu, Panos},
journal={Combinatorica},
volume={45},
number={3},
pages={33},
year={2025},
note={\href{https://doi.org/10.1007/s00493-025-00150-6}{doi:10.1007/s00493-025-00150-6}, \href{https://arxiv.org/abs/2305.07456}{arxiv:2305.07456}}
}

@misc{NSS,
      title={Asymptotic structure. {IV}. A counterexample to the weak coarse {M}enger conjecture}, 
      author={Tung Nguyen and Alex Scott and Paul Seymour},
      year={2025},
      eprint={2508.14332},
      archivePrefix={arXiv},
      primaryClass={math.CO},
note = {\href{https://arxiv.org/abs/2508.14332}{arxiv:2508.14332}}
}

@misc{HJ25,
    author = {Hickingbotham, Robert and Joret, Gwenaël},
    title = {An Induced ${A}$-Path Theorem},
    year = {2025}, 
      eprint={2512.17232},
      archivePrefix={arXiv},
      primaryClass={math.CO},
note = {\href{https://arxiv.org/abs/2512.17232}{arxiv:2512.17232}}
}

@misc{DJMM25,
      title={Erdős--{P}ósa property of cycles that are far apart}, 
      author={Vida Dujmović and Gwenaël Joret and Piotr Micek and Pat Morin},
      year={2025},
      eprint={2412.13893},
      archivePrefix={arXiv},
      primaryClass={math.CO},
note = {\href{https://arxiv.org/abs/2412.13893}{arxiv:2412.13893}}
}

@misc{W24,
title={Some results on coarse structure},
author={Paul Wollan},
note={online lecture (\href{https://www.birs.ca/events/2024/5-day-workshops/24w5272/videos/watch/202410011532-Wollan.html}{link}) given at Banff workshop ‘New Perspectives in Colouring and Structure (24w5272)’, October 2024}
}

@article{two-paths,
author = {Albrechtsen, Sandra and Huynh, Tony and Jacobs, Raphael W. and Knappe, Paul and Wollan, Paul},
title = {A {M}enger-Type Theorem for Two Induced Paths},
journal = {SIAM Journal on Discrete Mathematics},
volume = {38},
number = {2},
pages = {1438-1450},
year = {2024},
note = {\href{https://doi.org/10.1137/23M1573082}{doi:10.1137/23M1573082}, \href{https://arxiv.org/abs/2305.04721}{arxiv:2305.04721}}
}

@article{NGUYEN202568,
title = {A counterexample to the coarse {M}enger conjecture},
journal = {Journal of Combinatorial Theory, Series B},
volume = {173},
pages = {68-82},
year = {2025},
author = {Tung Nguyen and Alex Scott and Paul Seymour},
keywords = {Menger's Theorem, Connectivity, Coarse graph theory},
note = {\href{https://doi.org/10.1016/j.jctb.2025.01.004}{doi:10.1016/j.jctb.2025.01.004}, \href{https://arxiv.org/abs/2401.06685}{arxiv:2401.06685}}
}

@article{Men27,
title={Zur allgemeinen {K}urventheorie}, 
author={Menger, Karl}, 
journal={Fundamenta Mathematicae},
volume={10},
year={1927},
pages={96–115}
}

@article{BHJ18,
author = {Bruhn, Henning and Heinlein, Matthias and Joos, Felix},
title = {Frames, ${A}$-Paths, and the {E}rdös--{P}ósa Property},
journal = {SIAM Journal on Discrete Mathematics},
volume = {32},
number = {2},
pages = {1246-1260},
year = {2018},
note = {\href{https://doi.org/10.1137/17M1148542}{doi:10.1137/17M1148542}, \href{https://arxiv.org/abs/1707.02918}{arxiv:1707.02918}}
}

@inbook{koreanSODA,
author = {Jungho Ahn and J. Pascal Gollin and Tony Huynh and {O-joung} Kwon},
title = {A coarse Erdős-Pósa theorem},
booktitle = {Proceedings of the 2025 Annual ACM-SIAM Symposium on Discrete Algorithms (SODA)},
chapter = {},
publisher = {In Proceedings of the Annual ACM-SIAM Symposium on Discrete Algorithms (SODA)},
year = {2025},
pages = {3363-3381},
note = {\href{https://doi.org/10.1137/1.9781611978322.109}{doi:10.1137/1.9781611978322.109}, \href{https://arxiv.org/abs/2407.05883}{arxiv:2407.05883}}
}
\end{document}